\documentclass[12pt]{amsart}
\usepackage{amsmath,amssymb,amsthm,url}
\usepackage{graphicx}
\usepackage{fullpage}
\usepackage[all]{xy}
\usepackage{comment}

\numberwithin{equation}{section}

\newtheorem{thm}[equation]{Theorem}
\newtheorem*{thm*}{Theorem}
\newtheorem{prop}[equation]{Proposition}

\newtheorem{conj}[equation]{Conjecture}
\newtheorem*{conj*}{Conjecture}
\newtheorem{lem}[equation]{Lemma}
\newtheorem{defn}[equation]{Definition}

\theoremstyle{remark}
\newtheorem{rmk}[equation]{Remark}
\newtheorem{exm}[equation]{Example}

\newenvironment{enumroman}
{\begin{enumerate}}
{\end{enumerate}}

\newcommand{\slsh}[1]{\,|_{#1}\,}

\newcommand{\field}[1]{\mathbb{#1}} 
\newcommand{\Q}{\field{Q}}
\newcommand{\C}{\field{C}}
\newcommand{\R}{\field{R}}

\newcommand{\HH}{\field{H}}
\newcommand{\N}{\field{N}}

\newcommand{\Z}{\field{Z}}

\newcommand{\F}{\field{F}}

\newcommand{\T}{\field{T}}

\newcommand{\PP}{\field{P}}

\newcommand{\quat}[2]{\displaystyle{\biggl(\frac{#1}{#2}\biggr)}}

\newcommand{\calD}{\mathcal{D}}
\newcommand{\calH}{\mathcal{H}}
\newcommand{\calM}{\mathcal{M}}
\newcommand{\calO}{\mathcal{O}}

\newcommand{\fraka}{\mathfrak{a}}

\newcommand{\frakb}{\mathfrak{b}}

\newcommand{\frakd}{\mathfrak{d}}
\newcommand{\frakD}{\mathfrak{D}}
\newcommand{\frakl}{\mathfrak{l}}
\newcommand{\frakp}{\mathfrak{p}}

\newcommand{\frakn}{\mathfrak{n}}
\newcommand{\frakM}{\mathfrak{M}}
\newcommand{\frakN}{\mathfrak{N}}

\newcommand{\frakP}{\mathfrak{P}}

\DeclareMathOperator{\img}{img}
\DeclareMathOperator{\Aut}{Aut}
\DeclareMathOperator{\Tr}{Tr}
\DeclareMathOperator{\Cl}{Cl}
\DeclareMathOperator{\Coind}{Coind}
\DeclareMathOperator{\disc}{disc}
\DeclareMathOperator{\End}{End}
\DeclareMathOperator{\Frob}{Frob}
\DeclareMathOperator{\Gal}{Gal}
\DeclareMathOperator{\Hom}{Hom}
\DeclareMathOperator{\Jac}{Jac}
\DeclareMathOperator{\GL}{GL}
\DeclareMathOperator{\SL}{SL}
\DeclareMathOperator{\Map}{Map}
\DeclareMathOperator{\M}{M}
\DeclareMathOperator{\PSL}{PSL}
\DeclareMathOperator{\PGL}{PGL}

\DeclareMathOperator{\repart}{Re}

\DeclareMathOperator{\nrd}{nrd} 
\DeclareMathOperator{\sgn}{sgn} 
\DeclareMathOperator{\red}{red}
\DeclareMathOperator{\SO}{SO}

\DeclareMathOperator{\Lat}{Lat}

\newcommand{\spnew}{{}^{\text{new}}}

\newcommand{\Qbar}{\overline{\Q}}

\newcommand{\ahat}{\widehat{a}}
\newcommand{\alphahat}{\widehat{\alpha}}
\newcommand{\bhat}{\widehat{b}}
\newcommand{\betahat}{\widehat{\beta}}
\newcommand{\Bhat}{\widehat{B}}
\newcommand{\Fhat}{\widehat{F}}
\newcommand{\PGamma}{\mathrm{P}\Gamma}
\newcommand{\calOhat}{\widehat{\mathcal O}}
\newcommand{\gammahat}{\widehat{\gamma}}

\newcommand{\uhat}{\widehat{u}}

\newcommand{\pihat}{\widehat{\pi}}
\newcommand{\phat}{\widehat{p}}

\newcommand{\Qhat}{\widehat{\mathbb Q}}
\newcommand{\Zhat}{\widehat{\mathbb Z}}
\newcommand{\ZFhat}{\widehat{\mathbb Z}_F}

\newcommand{\gp}{\mathfrak{p}}
\newcommand{\gP}{\mathfrak{P}}
\newcommand{\ga}{\mathfrak{a}}
\newcommand{\gb}{\mathfrak{b}}

\newcommand{\CO}{\mathcal{O}}

\newcommand{\prodprime}{\sideset{}{^{'}}{\prod}}




\begin{document}

\title{Explicit methods for Hilbert modular forms}
\author{Lassina Demb\'el\'e}
\address{Warwick Mathematics Institute, University of Warwick, Coventry CV4 7AL, UK}
\email{lassina.dembele@gmail.com}
\author{John Voight}
\address{Department of Mathematics and Statistics, University of Vermont, 16 Col\-chester Ave, Burlington, VT 05401, USA}
\email{jvoight@gmail.com}
\date{\today}

\begin{abstract}
We exhibit algorithms to compute systems of Hecke eigenvalues for spaces of Hilbert modular forms over a totally real field.  We provide many explicit examples as well as applications to modularity and Galois representations.
\end{abstract}

\maketitle
\tableofcontents

The study of modular forms remains a dominant theme in modern number theory, a consequence of their intrinsic appeal as well as their applications to a wide variety of mathematical problems.  This subject has seen dramatic progress during the past half-century in an environment where both abstract theory and explicit computation have developed in parallel.  Experiments will remain an essential tool in the years ahead, especially as we turn from classical contexts to less familiar terrain.

In this article, we discuss methods for explicitly computing spaces of Hilbert modular forms, refashioning algorithms over $\Q$ to the setting of totally real fields.  Saving definitions for the sections that follow, we state our main result.

\begin{thm*}
There exists an algorithm that, given a totally real field $F$, a nonzero ideal $\frakN$ of the ring of integers of $F$, and a weight $k \in (\Z_{\geq 2})^{[F:\Q]}$, computes the space $S_k(\frakN)$ of Hilbert cusp forms of weight $k$ and level $\frakN$ over $F$ as a Hecke module.
\end{thm*}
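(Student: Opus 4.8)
The plan is to transfer, via the Jacquet--Langlands correspondence, the computation of $S_k(\frakN)$ to an automorphic space attached to a quaternion algebra $B$ over $F$, chosen so that the associated locally symmetric space is as simple as possible. Write $n = [F:\Q]$ and let $v_1, \dots, v_n$ be the archimedean places of $F$. If $n = 1$, so $F = \Q$, then $S_k(\frakN)$ is computed by classical modular symbols --- as $H^1$ of the modular curve of level $\frakN$ with coefficients in the local system $\Sym^{k-2}(\C^2)$ --- and this well-established case serves as the base. If $n$ is even, let $B$ be the quaternion algebra over $F$ ramified at exactly $v_1, \dots, v_n$ and at no finite place; it is totally definite and a division algebra. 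If $n \geq 3$ is odd, let $B$ be ramified at exactly $v_2, \dots, v_n$ (an even set) and at no finite place; then $B$ is a division algebra split at $v_1$, through which $B^\times$ acts on the upper half-plane $\calH$. In either quaternionic case $B$ is unramified at every finite place, so Jacquet--Langlands places the cuspidal automorphic representations of $B^\times$ of level $\frakN$ and weight $k$ in bijection with all those contributing to $S_k(\frakN)$; in particular there is no loss of forms, and no old/new bookkeeping is needed. Throughout, $B$ itself (via a Hilbert symbol ramifying at the prescribed places, using that the Hilbert symbol over $F$ is effectively computable), a maximal order of $B$, and an Eichler order $\calO = \calO_0(\frakN)$ of level $\frakN$ are constructed by known algorithms.

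For $n$ even, the relevant object is the finite-dimensional space $M_k(\calO)$ of weight-$k$ automorphic forms on $B^\times$ of level $\calO$; concretely, $M_k(\calO)$ is assembled from the $\Gamma_i$-invariants of $W_k$, where the $\Gamma_i$ are the finite unit groups attached to the finitely many right $\calO$-ideal classes and $W_k \cong \bigotimes_{j=1}^{n} \Sym^{k_j-2}(\C^2)$ is the associated (suitably normalized) representation of $(B \otimes_F \R)^\times \cong (\Hf^\times)^n$. First I would enumerate the ideal class set together with its unit groups by lattice-reduction methods over $\Z_F$, using the Eichler mass formula as a termination certificate. Then, for a prime $\frakq$, I would compute the Brandt matrix describing $T_\frakq$ (or $U_\frakq$ when $\frakq \mid \frakN$) on $M_k(\calO)$: its $(i,j)$ entry is a $W_k$-weighted count of the right $\calO$-ideals $J \subseteq I_j$ with $\nrd(J) = \frakq\,\nrd(I_j)$ lying in the class of $I_i$, and computing it reduces to enumerating lattice vectors of bounded reduced norm. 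Finally I would split off the one-dimensional part of $M_k(\calO)$, spanned by the characters of $B^\times$ that factor through $\nrd$ --- indexed by a quotient of the narrow ray class group of $\frakN$, hence computable by class field theory --- which leaves $S_k(\frakN)$ with its action of the $T_\frakq$.

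For $n \geq 3$ odd, I would first compute an exact fundamental domain for the arithmetic Fuchsian group $\Gamma$ that is the image of $\calO^\times$ in $\PSL_2(\R)$, and from it a finite presentation of $\Gamma$ by generators and relations; this is the fundamental-domain algorithm for Fuchsian groups of quaternionic type, whose termination rests on the discreteness and cocompactness of $\Gamma$ together with an effective lower bound on its covolume. From the presentation I would compute $H^1(\Gamma, W_k)$ --- with $W_k$ as above, now viewed as a $\Gamma$-module through the $n$ archimedean embeddings --- as an explicit quotient $Z^1/B^1$ of finite-dimensional $\C$-vector spaces. Since $X = \Gamma \backslash \calH$ is a compact Shimura curve, $H^1(X, W_k) = H^1(\Gamma, W_k)$ is, as a Hecke module, $S_k(\frakN) \oplus \overline{S_k(\frakN)}$ together with a computable Eisenstein contribution, from which $S_k(\frakN)$ is extracted. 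The operator $T_\frakq$ acts on cocycles through the correspondence attached to a double coset $\Gamma \alpha_\frakq \Gamma$ with $\alpha_\frakq \in \calO$ of reduced norm a totally positive generator of $\frakq$; computing the coset representatives, and rewriting each product of an $\alpha_\frakq$-translate with a generator of $\Gamma$ back inside $\Gamma$, uses the solution to the word problem furnished by the fundamental domain.

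The ambient number-theoretic primitives over $F$ --- arithmetic and ideal factorization in $\Z_F$, computation of narrow ray class groups, and enumeration of lattice vectors of bounded reduced norm in orders of $B$ --- are all effective, so the genuinely delicate points are the two termination statements. Of these the main obstacle is the halting and correctness of the fundamental-domain algorithm in the odd case: one must certify each hyperbolic-geometric comparison exactly, carrying enough algebraic data alongside the floating-point approximations, and prove that the accumulating domain stabilizes after finitely many steps; by comparison the even-degree class-set enumeration is routine, since the mass formula bounds the search a priori. Once the $T_\frakq$ are available for every prime $\frakq$, the pair $\bigl(S_k(\frakN), \{T_\frakq\}_\frakq\bigr)$ is the Hecke module demanded by the theorem, and bounding the cost of each step confirms that the procedure is an algorithm.
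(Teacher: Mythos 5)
Your overall strategy---Jacquet--Langlands transfer to a quaternion algebra of discriminant $(1)$, chosen totally definite when $n$ is even (Brandt matrices on weighted ideal classes, with the mass formula as termination certificate) and ramified at all but one real place when $n\geq 3$ is odd (group cohomology of an arithmetic Fuchsian group computed from an exact fundamental domain with a solution to the word problem)---is exactly the paper's, down to the identification of the two delicate termination statements.

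There is, however, a genuine gap in your indefinite branch when $F$ has nontrivial strict class number. You work with a single Fuchsian group $\Gamma$ (the image of $\calO^\times$) and a single compact curve $X=\Gamma\backslash\calH$, and you define $T_\frakq$ by a double coset $\Gamma\alpha_\frakq\Gamma$ with $\alpha_\frakq\in\calO$ of reduced norm a totally positive generator of $\frakq$. Such an $\alpha_\frakq$ exists only when $[\frakq]$ is trivial in $\Cl^+\Z_F$: by Eichler's norm theorem and strong approximation the reduced norms of elements of $\calO\cap B_+^\times$ generate only narrowly principal ideals. For general $F$ the Shimura variety $X_0^B(\frakN)(\C)$ is a disjoint union of $\#\Cl^+\Z_F$ curves $\Gamma_\fraka\backslash\calH$, the cusp forms decompose as $\bigoplus_{[\fraka]}H^1(\Gamma_\fraka,V_k(\C))^+$, and a Hecke operator $T_\frakq$ with $[\frakq]$ nontrivial \emph{permutes} the summands, sending the $[\frakb]$-component to the $[\fraka]$-component with $[\frakb]=[\frakq^{-1}\fraka]$ via elements $\varpi$ of the connecting lattice $I_\frakb I_\fraka^{-1}$ of the appropriate reduced norm, rather than via elements of $\calO$ itself. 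As written, your algorithm computes only one component of a space that is not Hecke-stable and cannot produce $T_\frakq$ for non-narrowly-principal $\frakq$; repairing this is precisely the content of the adelic description occupying Sections 7 and 9 of the paper. (Your definite branch does not suffer from this, since the right-ideal-class formulation is already adelic. Two smaller slips there: the Eisenstein subspace is nonzero only in parallel weight $2$, where its dimension is governed by $\Cl^+\Z_F$ rather than being one-dimensional; and in the indefinite case $H^1$ of a compact Shimura curve carries no Eisenstein contribution at all.)
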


This theorem is the work of the first author \cite{Dembeleclassno1} together with Donnelly \cite{DembeleDonnelly} combined with work of the second author \cite{Voightclno} together with Greenberg \cite{GreenbergVoight}.

The outline of this article is as follows.  After briefly recalling methods for classical (elliptic) modular forms in \S 1, we introduce our results for computing Hilbert modular forms in the simplest case (of parallel weight $2$ over a totally real field of strict class number $1$) in \S 2.  In brief, our methods employ the Jacquet-Langlands correspondence to relate spaces of Hilbert modular forms to spaces of quaternionic modular forms that are more amenable to computation; we discuss this matter in \S 3, and consider two approaches (definite and indefinite) in \S\S 4--5.  In \S 6 we consider several examples in detail.  Having thereby established the main ideas, we turn to an adelic description of Hilbert modular forms and their quaternionic incarnations in \S 7, then give a complete and general description of our algorithms in adelic language in \S\S 8--9.  

Although it is our intention to keep these notes as self-contained as possible, we will assume that the reader has a basic familiarity with classical modular forms and the methods employed to compute them.  The algorithms exhibited below have been implemented in the computer algebra system \textsf{Magma} \cite{magma1} and our examples are computed using this implementation.  Donnelly and the second author \cite{DonnellyVoight} are using this implementation to compute Hecke data for thousands of forms over totally real fields up to degree $6$.

These notes arose from lectures at the Centre de Recerca Matem\`atica (CRM) in Barcelona; it is our great pleasure to thank the CRM for the invitation to speak and the hospitality of the organizers, Luis Dieulefait and Victor Rotger.  The authors would also like to thank Matthew Greenberg, Ariel Pacetti, Aurel Page, Jeroen Sijsling, and the referee for many helpful comments as well as Benedict Gross for his remarks which we include at the end of Section 6.  The first author is supported by a Marie-Curie Fellowship, and the second author  by an NSF Grant No.\ DMS-0901971.

\section{Classical (elliptic) modular forms}

To motivate our study of Hilbert modular forms, we begin by briefly considering algorithms for classical (elliptic) modular forms.  For a more detailed introduction to modular forms, see the books by Darmon \cite{Darmon} and Diamond and Shurman \cite{DiamondShurman}, and for more information on computational aspects see Cremona \cite{cremonatables}, Kilford \cite{Kilford}, Stein \cite{Stein}, and the many references contained therein.

Let $\calH=\{x+yi \in \C : y>0\}$ denote the upper-half plane and let $\calH^*=\calH \cup \PP^1(\Q)$ denote the completed upper half-plane with the \emph{cusps} $\PP^1(\Q)$.  The group 
\[ \GL_2^+(\Q)=\{\gamma \in \GL_2(\Q) : \det \gamma > 0\} \] 
acts on $\calH^*$ by linear fractional transformations.  For $N \in \Z_{>0}$, we consider the subgroup of those integral matrices of determinant $1$ that are upper-triangular modulo $N$,
\[ \Gamma_0(N)= \left\{\gamma = \textstyle{\begin{pmatrix} a & b \\ c & d \end{pmatrix}} \in \SL_2(\Z) : N \mid c\right\} \subseteq  \GL_2^+(\Z) = \SL_2(\Z) \subseteq  \GL_2^+(\Q).   \]
The group $\PGamma_0(N)=\Gamma_0(N)/\{\pm 1\}$ is a discrete subgroup of $\PSL_2(\R)$.  A \emph{modular form} of weight $k \in \Z_{>0}$ and level $N$ is a holomorphic function $f:\calH \to \C$ such that 
\begin{equation} \label{fgammaz}
f(\gamma z)   =f\left(\frac{az+b}{cz+d}\right)   = (cz+d)^k f(z)
\end{equation}
for all $\gamma \in \Gamma_0(N)$ and such that $f(z)$ tends to a finite limit as $z$ tends to any cusp (i.e., $f$ is holomorphic at the cusps).  

One can equivalently write this as follows.  For $\gamma \in \GL_2(\R)$ and $z \in \calH$ we define $j(\gamma,z)=cz+d$.  We then define a \emph{weight $k$ action} of $\GL_2^+(\Q)$ on the space of complex-valued functions on $\calH$ by
\begin{equation} \label{fslsh}
(f\slsh{k} \gamma)(z)=\frac{(\det\gamma)^{k-1}}{j(\gamma,z)^k} f(\gamma z).
\end{equation}
Then (\ref{fgammaz}) is equivalent to $f\slsh{k} \gamma = f$ for all $\gamma\in\Gamma_0(N)$. 

Note that the determinant factor $(\det\gamma)^{k-1}$ in our definition is different from the usual $(\det\gamma)^{k/2}$, which is an analytic normalization.  Consequently, the central subgroup $\Q^\times \subseteq \GL_2^+(\Q)$ acts by $f \slsh{k} \gamma = \gamma^{k-2} f$.

\begin{rmk}
For simplicity we treat only the case of $\Gamma_0(N)$-level structure in this article.  If desired, one could without difficulty extend our methods to more general level structures with characters, and so on.
\end{rmk}

The $\C$-vector space of modular forms of weight $k$ and level $N$ is finite-dimensional and is denoted $M_k(N)$.  

If $f \in M_k(N)$, then $f(z+1)=f(z)$ so $f$ has a Fourier expansion 
\begin{equation} \label{qexpclassical}
f(z)=\sum_{n=0}^{\infty} a_n q^n = a_0 + a_1 q + a_2 q^2 + a_3 q^3 + \dots
\end{equation}
where $a_n \in \C$ and $q=\exp(2\pi iz)$.  

We say that $f$ is a \emph{cusp form} if $f(z) \to 0$ as $z$ tends to any cusp (i.e., $f$ vanishes at the cusps).  The $\C$-vector space of cusp forms of weight $k$ and level $N$ is denoted $S_k(N)$.  We have $M_k(N)=S_k(N) \oplus E_k(N)$ where $E_k(N)$ is spanned by the Eisenstein series of level $N$.  Note that when $k\geq 2$, then (\ref{fgammaz}) is equivalent to
\[ f(\gamma z)\, (d(\gamma z))^{k-1} = f(z)\,(dz)^{k-1} \]
so one may equivalently think of a cusp form $f \in S_k(N)$ as a holomorphic differential $(k-1)$-form on the \emph{modular curve} $X_0(N)=\Gamma_0(N) \backslash \calH^*$.  (Because of our normalization, such differential forms will be global sections of the line bundle corresponding to the algebraic local system $\mathrm{Sym}^{k-2}(\C^2)$, corresponding to bivariate homogeneous polynomials of degree $k-2$.  Some authors use an analytic normalization instead.)

The spaces $M_k(N)$ and $S_k(N)$ are equipped with an action of pairwise commuting diagonalizable \emph{Hecke operators} $T_n$ for each integer $n \in \Z_{>0}$.  The Hecke operators can be thought of in several different ways: they arise from correspondences on the modular curve $X_0(N)$, as ``averaging'' operators over lattices of index $n$, or more formally from double coset decompositions for the group $\Gamma_0(N)$ inside $\SL_2(\Z)$.  The action of the Hecke operator $T_n$ is determined by the action of $T_p$ for $p \mid n$, and the latter for $p \nmid N$ in weight $k$ are given simply by the formula
\[ (T_p f)(z) = p^{k-1} f(pz) + \frac{1}{p} \sum_{a=0}^{p-1} f\left(\frac{z+a}{p}\right). \]
(For primes $p \mid N$ one omits the first term, and there are also operators called \emph{Atkin-Lehner involutions}.)  We say therefore that $S_k(N)$ is a \emph{Hecke module}, namely, an abelian group equipped with an action of the \emph{Hecke algebra} $\widetilde{\T}=\Z[T_p]_p=\Z[T_2,T_3,\dots]$, a polynomial ring in countably many variables over $\Z$ indexed by the primes.  Our Hecke modules will always be finite-dimensional $\C$-vector spaces.  

A form $f \in S_k(N)$ is an \emph{oldform (at $d$)} if $f(z)=g(dz)$ for some $g \in S_k(M)$ with $M \mid N$ a proper divisor and $d \mid N/M$; we say $f$ is a \emph{newform} if $f$ is a normalized eigenform which is orthogonal to the space of oldforms (with respect to the Petersson inner product).  

The space $S_k(N)$ consequently has a basis of \emph{eigenforms}, i.e., functions that are eigenfunctions for each Hecke operator $T_n$.  If $f$ is an eigenform, \emph{normalized} so that $a_1=1$ in its $q$-expansion (\ref{qexpclassical}), then $T_n f = a_n f$.  Moreover, the field $\Q(\{a_n\})=E \subseteq  \C$ is a number field and each \emph{Hecke eigenvalue} $a_n$ is an algebraic integer in $E$.  

In this way, the system of Hecke eigenvalues $(a_p)_p$ for a normalized eigenform $f \in S_k(N)$ determine the form $f:\calH \to \C$.    These eigenvalues also determine the $L$-series 
\[ L(f,s) = \sum_{n=1}^{\infty} \frac{a_n}{n^s} = \prod_{p \nmid N} \left(1-\frac{a_p}{p^s}+\frac{1}{p^{2s+1-k}}\right)^{-1} \prod_{p \mid N} \left(1-\frac{a_p}{p^s}\right)^{-1} \]
associated to $f$ (defined for $\repart s>1$), as well as the $\ell$-adic Galois representations 
\[ \rho_{f,\ell}:\Gal(\Qbar/\Q) \to \GL_2(\overline{\Z}_\ell) \] 
associated to $f$ with the property that for any prime $p \nmid \ell N$, we have
\[ \Tr(\rho_{f,\ell}(\Frob_p))=a_p(f) \quad \text{ and } \quad \det(\rho_{f,\ell}(\Frob_p))=p^{k-1}. \]

Several methods have been proposed for making the Hecke module $S_k(N)$ into an object amenable to explicit computation.  With a view to their generalizations to Hilbert modular forms, we mention two approaches which have seen wide application.  (We neglect the method of graphs \cite{mestre} and a method which uses the Eichler-Selberg trace formula \cite{Hijikata}.)  For simplicity, we restrict our discussion to the case of weight $k=2$.

The first method goes by the name \emph{modular symbols} and has been developed by Birch, Swinnerton-Dyer, Manin, Mazur, Merel, Cremona \cite{cremonatables}, Stein \cite{Stein}, and many others.  The Hecke operators $T_p$ act naturally on the integral homology $H_1(X_0(N),\Z;\text{cusps})$---linear combinations of paths in the completed upper half plane $\calH^*$ whose endpoints are cusps and whose images in $X_0(N)$ are linear combinations of loops---and integration defines a nondegenerate Hecke-equivariant pairing which gives rise to an isomorphism (the Eichler-Shimura theorem)
\[ H_1(X_0(N),\C;\text{cusps}) \cong S_2(N) \oplus \overline{S_2(N)} \]
where $\overline{\phantom{x}}$ denotes complex conjugation.  The formalism of modular symbols then presents the space $H_1(X_0(N),\Z;\text{cusps})$ explicitly in terms of paths in $\calH^*$ whose endpoints are cusps (elements of $\PP^1(\Q)$) and whose images in $X_0(N)$ are a linear combination of loops.  We have an explicit description of the action of the Hecke operators on the space of modular symbols, and the \emph{Manin trick} (the Euclidean algorithm) yields an algorithm for writing an arbitrary modular symbol as a $\Z$-linear combination of a finite set of generating symbols, thereby recovering $S_2(N)$ as a Hecke module. 

The second method goes by the name \emph{Brandt matrices} and goes back to Brandt, Eichler \cite{Eichlerbasis,Eichlerbasis1}, Pizer \cite{Pizer}, Kohel \cite{kohel}, and others.  In this approach, a basis for $S_2(N)$ is obtained by linear combinations of theta series associated to (right) ideals in a quaternion order of discriminant $N$.  These theta series are generating series which encode the number of elements in the ideal with a given reduced norm, and the Brandt matrices which represent the action of the Hecke operators are obtained via this combinatorial (counting) data.  

\section{Classical Hilbert modular forms}

We now consider the situation where the classical modular forms from the previous section are replaced by forms over a totally real field.  References for Hilbert modular forms include Freitag \cite{Freitag}, van der Geer \cite{geer} and Goren \cite{Goren}.

Let $F$ be a totally real field with $[F:\Q]=n$ and let $\Z_F$ be its ring of integers.  The case $n=1$ gives $F=\Q$ and this was treated in the previous section, so we assume throughout this section that $n>1$.  Let $v_1,\dots,v_n:F \to \R$ be the real places of $F$,   and write $v_i(x)=x_i$.  For $\gamma \in \M_2(F)$ we write $\gamma_i=v_i(\gamma) \in \M_2(\R)$.

For simplicity, in these first few sections (\S\S 2--6) we assume that $F$ has strict class number 1; the general case, which is more technical, is treated in \S 7 and thereafter.

The group 
\[ \GL_2^+(F)=\{\gamma \in \GL_2(F) : \det \gamma_i > 0 \text{ for $i=1,\dots,n$}\} \] 
acts naturally on $\calH^n$ by coordinatewise linear fractional transformations
\[ z \mapsto \gamma z = (\gamma_i z_i)_i = \left(\frac{a_i z_i+b_i}{c_i z_i + d_i}\right)_{i=1,\dots,n}. \]  
For a nonzero ideal $\frakN \subseteq  \Z_F$,   let
\[ \Gamma_0(\frakN)= \left\{ \gamma = \begin{pmatrix} a & b \\ c & d \end{pmatrix} \in \GL_2^+(\Z_F) : c \in \frakN\right\} \subseteq  \GL_2^+(\Z_F) \subseteq  \GL_2(F). \]

Let $\PGamma_0(\frakN) = \Gamma_0(\frakN)/\Z_F^\times \subseteq  \PGL_2^+(\Z_F)$.  Then the image of $\PGamma_0(\frakN)$ under the embeddings $\gamma \mapsto (\gamma_i)_i$ is a discrete subgroup of $\PGL_2^+(\R)^n$.  

Under the assumption that $F$ has strict class number $1$, we have 
\[ \Z_{F,+}^\times = \{x \in \Z_F^\times : x_i>0\text{ for all $i$}\} = \Z_F^{\times 2} \]
and hence $\GL_2^+(\Z_F) = \Z_F^\times \SL_2(\Z_F)$, and so alternatively we may identify 
\[ \PGamma_0(\frakN) \cong \left\{ \gamma = \begin{pmatrix} a & b \\ c & d \end{pmatrix} \in \SL_2(\Z_F) : c \in \frakN\right\}/\{\pm 1\} \]
in analogy with the case $F=\Q$.

\begin{defn} \label{Hilbclass}
A \emph{Hilbert modular form} of parallel weight $2$ and level $\frakN$ is a holomorphic function $f:\calH^{n}\to\C$   such that 
\begin{equation} \label{slashfz}
f(\gamma z) = f\left(\frac{a_1 z_1 + b_1}{c_1 z_1 + d_1}, \dots, \frac{a_n z_n + b_n}{c_n z_n + d_n}\right) = \left(\prod_{i=1}^n \frac{(c_i z_i+d_i)^2}{\det \gamma_i} \right) f(z)
\end{equation}
for all $\gamma\in\Gamma_0(\frakN)$.
\end{defn}

We denote by $M_2(\frakN)$ the space of Hilbert modular forms of parallel weight $2$ and level $\frakN$; it is a finite-dimensional $\C$-vector space.  The reader is warned not to confuse $M_2(\frakN)$ with the ring $\M_2(R)$ of $2 \times 2$-matrices over a ring $R$.

\begin{rmk}
There is no holomorphy condition at the cusps in Definition \ref{Hilbclass} as there was for classical modular forms.  Indeed, under our assumption that $[F:\Q]=n>1$, this follows automatically from Koecher's principle~\cite[\S 1]{geer}. 

Note also that if $u \in \Z_F^\times$, then $\gamma(u)=\begin{pmatrix} u & 0 \\ 0 & u \end{pmatrix} \in \GL_2^+(\Z_F)$ acts trivially on $\calH$ and at the same time gives a vacuous condition in (\ref{slashfz}), explaining the appearance of the determinant term which was missing in the classical case.
\end{rmk}

Analogous to (\ref{fslsh}), we define
\begin{equation} \label{fslshhilb}
(f \slsh{} \gamma)(z) = \left(\prod_{i=1}^n \frac{\det \gamma_i}{j(\gamma_i, z)^2}\right)f(\gamma z) 
\end{equation}
for $f:\calH^n \to \C$ and $\gamma \in \GL_2^+(F)$; then (\ref{slashfzquat}) is equivalent to $(f \slsh{} \gamma)(z) = f(z)$ for all $\gamma \in \Gamma_0(\frakN)$.  

The group $\GL_2^+(F)$ also acts naturally on the \emph{cusps} $\PP^1(F) \hookrightarrow \PP^1(\R)^n$.  We say that $f \in M_2(\frakN)$ is a \emph{cusp form} if $f(z) \to 0$ whenever $z$ tends to a cusp, and we denote the space of cusp forms (of parallel weight $2$ and level $\frakN$) by $S_2(\frakN)$.  We have an orthogonal decomposition $M_2(\frakN)=S_2(\frakN) \oplus E_2(\frakN)$ where $E_2(\frakN)$ is spanned by Eisenstein series of level $\frakN$; for level $\frakN=(1)$, we have $\dim E_2(1) = \#\Cl^+ \Z_F$, where $\Cl^+ \Z_F$ denotes the strict class group of $\Z_F$.

Hilbert modular forms admit Fourier expansions as follows.  For a fractional ideal $\frakb$ of $F$, let 
\[ \frakb_{+} = \{x \in \frakb : x_i > 0 \text{ for $i=1,\dots,n$}\}. \] 
Let $\frakd$ be the different of $F$, and let $\frakd^{-1}$ denote the inverse different.
A Hilbert modular form $f\in M_{2}(\frakN)$ admits a Fourier expansion
\begin{equation} \label{qExpHilbert}
f(z)=a_0+\sum_{\mu\in (\frakd^{-1})_{+}}a_\mu e^{2\pi i\Tr(\mu z)}.
\end{equation}
with $a_0=0$ if $f$ is a cusp form.

Let $f \in M_2(\frakN)$ and let $\frakn \subseteq  \Z_F$ be a nonzero ideal.  Then under our hypothesis that $F$ has strict class number $1$, we may write $\frakn=\nu \frakd^{-1}$ for some $\nu \in \frakd_{+}$;  we then define $a_\frakn = a_\nu$.  The transformation rule (\ref{slashfz}) implies that $a_\frakn$ does not depend on the choice of $\nu$, and we call $a_\frakn$ the \emph{Fourier coefficient} of $f$ at $\frakn$.  

The spaces $M_2(\frakN)$ and $S_2(\frakN)$ are also equipped with an action of pairwise commuting diagonalizable \emph{Hecke operators} $T_\frakn$ indexed by the nonzero ideals $\frakn$ of $\Z_F$.  For example, given a prime $\frakp \nmid \frakN$ and a totally positive generator $p$ of $\frakp$ we have
\begin{equation} \label{Heckehilbert}
(T_\frakp f)(z) = N(\frakp) f(pz) + \frac{1}{N(\frakp)} \sum_{a \in \F_\frakp} f\left(\frac{z+a}{p}\right),
\end{equation}
where $\F_\frakp = \Z_F/\frakp$ is the residue field of $\frakp$; this definition is indeed independent of the choice of generator $p$.  Using the notation (\ref{fslshhilb}), we can equivalently write
\begin{equation} \label{Heckehilbertslsh}
(T_\frakp f)(z) = \sum_{a \in \PP^1(\F_\frakp)} (f \slsh{} \pi_a)(z) 
\end{equation}
where $\pi_\infty=\begin{pmatrix} p & 0 \\ 0 & 1 \end{pmatrix}$ and $\pi_a=\begin{pmatrix} 1 & a \\ 0 & p \end{pmatrix}$ for $a \in \F_\frakp$.

If $f \in S_2(\frakN)$ is an eigenform, \emph{normalized} so that $a_{(1)}=1$, then $T_\frakn f = a_\frakn f$, and each eigenvalue $a_\frakn$ is an algebraic integer which lies in the number field $E=\Q(\{a_\frakn\}) \subseteq  \C$ (see Shimura~\cite[Section 2]{shimura2}) generated by the Fourier coefficients of $f$.  We again have notions of \emph{oldforms} and \emph{newforms}, analogously defined (so that a newform is in particular a normalized eigenform).

Associated to an eigenform $f \in S_2(\frakN)$ we have an $L$-function 
\[ L(f,s) = \sum_{\frakn} \frac{a_{\frakn}}{N\frakn^s} \]
and $\frakl$-adic Galois representations 
\[ \rho_{f,\frakl}:\Gal(\overline{F}/F) \to \GL_2(\overline{\Z}_{F,\frakl}) \] 
for primes $\frakl$ of $\Z_F$ such that, for any prime $\frakp \nmid \frakl\frakN$, we have
\[ \Tr(\rho_{f,\frakl}(\Frob_{\frakp}))=a_\frakp(f) \quad \text{ and } \quad \det(\rho_{f,\frakl}(\Frob_\frakp))=N\frakp. \]
Each of these is determined by the Hecke eigenvalues $a_\frakn$ of $f$, so we are again content to compute $S_2(\frakN)$ as a Hecke module.  

We are now ready to state the first version of our main result.

\begin{thm}[Demb\'el\'e \cite{Dembeleclassno1}, Greenberg-Voight \cite{GreenbergVoight}]  \label{mainthmclno1} 
There exists an algorithm which,   given a totally real field $F$ of strict class number $1$ and a nonzero ideal $\frakN \subseteq  \Z_F$, computes the space $S_2(\frakN)$ of Hilbert cusp forms of parallel weight $2$ and level $\frakN$ over $F$ as a Hecke module.
\end{thm}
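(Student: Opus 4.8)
The plan is to exploit the Jacquet--Langlands correspondence to replace the analytically-defined space $S_2(\frakN)$ by a space of \emph{quaternionic} modular forms, which admits a finite and effective description, and then to extract the Hecke module $S_2(\frakN)$ from the quaternionic side. Fix a quaternion algebra $B$ over $F$ ramified at a set $S$ of places whose finite part $\frakD$ divides $\frakN$. Since $\#S$ must be even, the natural choices are: take $S$ to be all $n$ real places when $n$ is even (a totally definite $B$ with $\frakD = (1)$), or take $S$ to be all but one real place when $n$ is odd (an indefinite $B$, split at one real place, again with $\frakD = (1)$); in either case no finite ramification is forced, although for efficiency one may instead absorb one or more primes dividing $\frakN$ into $S$. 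Because $F$ has strict class number $1$, every relevant ideal has a totally positive generator and the adelic double-coset space collapses to a single class, so the classical (non-adelic) description of quaternionic forms of level $\frakN/\frakD$ suffices; it is precisely the later sections that remove this hypothesis. The Jacquet--Langlands correspondence then gives a Hecke-equivariant identification of the space of weight-$2$ quaternionic modular forms for an Eichler order $\CO$ of level $\frakN/\frakD$ in $B$ with the subspace $S_2(\frakN)^{\frakD\text{-new}} \subseteq S_2(\frakN)$ of forms new at every prime dividing $\frakD$, up to explicit Hecke-stable extra summands --- a trivial (Eisenstein-type) piece of dimension $\#\Cl^+\Z_F = 1$ in the definite case, a complex-conjugate copy in the indefinite case --- which are split off by hand.

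When $\frakD = (1)$ this already yields all of $S_2(\frakN)$. In general --- and whenever a ramified prime has been introduced for efficiency --- I recover the full space by a finite recursion on the level: the oldform structure of $S_2$ expresses the $\frakD$-old subspace in terms of the spaces $S_2(\frakM)$ for divisors $\frakM \mid \frakN$ of smaller norm, via the degeneracy maps, which are themselves built from Hecke data computed along the way; thus $S_2(\frakN)$ decomposes as a direct sum, over divisors of $\frakN$, of new subspaces each accessible as above with $\frakD$ chosen appropriately for that divisor. The recursion terminates because $N(\frakM)$ strictly decreases, and at each stage peeling off the Eisenstein part and the old part is linear algebra over $\C$.

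It remains to compute the quaternionic spaces, and here there are two regimes. If $B$ is totally definite, a weight-$2$ quaternionic form of level $\frakN/\frakD$ is simply a complex-valued function on the finite set of right ideal classes of $\CO$; this class set is computed by the methods of Demb\'el\'e and Donnelly \cite{Dembeleclassno1,DembeleDonnelly}, and the Hecke operator $T_\frakp$ is represented by the Brandt matrix whose entries count the elements of prescribed reduced norm in the $\CO$-ideals connecting the classes --- a finite enumeration of short vectors in lattices. If instead $B$ is indefinite with exactly one split real place, the quaternionic forms are realized in $H^1(\Gamma,\C)$ for the arithmetic Fuchsian group $\Gamma = \Gamma_0^B(\frakN/\frakD)$ acting on $\calH$; following Voight and Greenberg--Voight \cite{Voightclno,GreenbergVoight} one computes a Dirichlet fundamental domain for $\Gamma$ together with its side-pairing, reads off a finite presentation of $\Gamma$ and hence $H^1(\Gamma,\C)$ by linear algebra, and realizes each Hecke operator from explicit double-coset representatives (the analogues of the $\pi_a$ above) acting on cohomology. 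In either case one isolates $S_2(\frakN)^{\frakD\text{-new}}$ as the Hecke-stable complement of the extra summands identified above --- the span of the constant functions in the definite case, one of the two conjugate halves of $H^1(\Gamma,\C)$ in the indefinite case.

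The main obstacle is the effectivity of these two core computations. On the definite side, producing the ideal class set of $\CO$ and the Brandt matrices comes down to arithmetic in $B$: enumerating elements of bounded reduced norm, deciding isomorphism of $\CO$-ideals, and solving principal-ideal and norm equations; this ultimately rests on being able to compute the unit and class groups of $F$ and to factor and search effectively, all of which are possible, so that the algorithm provably terminates (with efficiency, but not correctness, relying on GRH). On the indefinite side the delicate point is the fundamental-domain algorithm for the Fuchsian group $\Gamma$: one must prove that the domain-enlarging procedure halts and returns a genuine fundamental domain with a valid side-pairing, after which the presentation, the cohomology, and the Hecke action follow. Granting these, Hecke-equivariance of the Jacquet--Langlands identification and of the degeneracy maps guarantees that the space reconstructed above carries the correct Hecke action, which is exactly what the theorem demands.
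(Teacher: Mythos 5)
Your proposal is correct and follows essentially the same route as the paper: the Jacquet--Langlands correspondence with the canonical choice of $B$ unramified at all finite places (definite for $n$ even, indefinite with one split real place for $n$ odd), the Brandt-matrix computation of ideal classes and norm counts in the definite case, and the Dirichlet-domain/group-cohomology computation of $H^1(\Gamma,\C)^+$ in the indefinite case, with the Eisenstein or conjugate summands split off and oldforms handled by recursion on the level as in Remark \ref{rmkgenspace}. The effectivity points you flag (ideal-class enumeration, isomorphism testing, and termination of the fundamental-domain algorithm) are exactly the ones the paper delegates to \cite{KirschmerVoight}, \cite{V-fd}, \cite{Dembeleclassno1}, and \cite{GreenbergVoight}.
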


In other words,   there exists an explicit finite procedure which takes as input   the field $F$ and the ideal $\frakN \subseteq  \Z_F$ encoded in bits (in the usual way, see e.g.\ Cohen \cite{Cohen1}), and outputs a finite set of sequences $(a_\frakp(f))_{\frakp}$   encoding the Hecke eigenvalues for each cusp form constituent $f$ in $S_2(\frakN)$,   where $a_\frakp(f) \in E_f \subseteq  \Qbar$.  (This algorithm will produce any finite subsequence in a finite amount of time, but in theory will produce the entire sequence if it is left to run forever.)  Alternatively, this algorithm can simply output matrices for the Hecke operators $T_\frakp$; one recovers the constituent forms using linear algebra.

\begin{exm}
Let $F=\Q(\sqrt{5})$.  Then $\Z_F=\Z[w]$ where $w=(1+\sqrt{5})/2$ satisfying $w^2-w-1=0$.  Let $\frakN=(3w-14) \subseteq  \Z_F$; we have $N(\frakN)=229$ is prime.  We compute that $\dim S_2(\frakN)=4$.    There are $2$ Hecke irreducible subspaces of dimensions $1$ and $3$, corresponding to newforms $f$ and $g$ (and its Galois conjugates).  We have the following table of eigenvalues; we write $\frakp=(p)$ for $p \in \Z_F$.
\[
\begin{array}{c||ccccc}
\frakp  & (2) & (w+2) & (3) & (w+3) & (w-4) \\
N\frakp & 4 & 5   & 9 & 11  & 11   \\
\hline
\rule{0pt}{2.5ex} 
a_\frakp(f) & -3 & -4 & -1 & 0 & -2 \\
a_\frakp(g) & t & t^2-4t+1 & -t^2 + 2t + 2 & t^2 -2t -3 & -3t^2+8t+1
\end{array}  
\] 
Here, the element $t \in \Qbar$ satisfies $t^3-3t^2-t+1=0$ and $E=\Q(t)$ is an $S_3$-field of discriminant 148.  
\end{exm}

Recall that in the method of modular symbols, a cusp form $f \in S_2(N)$ corresponds to a holomorphic differential (1-)form $(2\pi i) f(z)\,dz$ on $X_0(N)$ and so, by the theorem of Eichler-Shimura, arises naturally in the space $H^1(X_0(N),\C)$.  In a similar way, a Hilbert cusp form $f \in S_2(\frakN)$ gives rise to a holomorphic differential $n$-form $(2\pi i)^n f(z_1,\ldots,z_n)\, dz_1 \cdots dz_n$   on the \emph{Hilbert modular variety}   $X_{0}(\frakN)$, the desingularization of the compact space $\Gamma_0(\frakN) \backslash (\calH^n)^*$ where $(\calH^n)^* = \calH^n \cup \PP^1(F)$.  But now $X_0(\frakN)$ is an algebraic variety of complex dimension $n$ and $f$ arises in the cohomology group $H^n(X_0(\frakN),\C)$.  Computing with higher dimensional varieties (and higher degree cohomology groups) is not an easy task!  So we seek an alternative approach.

Langlands functoriality predicts that $S_2(\frakN)$ as a Hecke module occurs in the cohomology of other ``modular'' varieties as well.  This functoriality was already evident by the fact that both modular symbols and their quaternionic variant, Brandt matrices, can be used to compute the classical space $S_2(N)$.  In our situation, this functoriality is known as the Jacquet-Langlands correspondence, which ultimately will allow us to work with varieties of complex dimension $1$ or $0$   by considering twisted forms of $\GL_2$ over $F$ arising from quaternion algebras.  In dimension $1$, we will arrive at an algorithm which works in the cohomology of a Shimura curve, analogous to a modular curve, and thereby give a kind of analogue of modular symbols; in dimension $0$, we generalize Brandt matrices by working with theta series on (totally definite) quaternion orders.  

\section{Quaternionic modular forms} \label{sec:auto-forms-1} 

In this section, we define modular forms on quaternion algebras; our main reference is Hida \cite{Hidabook}.  We retain the notation of the previous section; in particular, $F$ is a totally real field of degree $[F:\Q]=n$ with ring of integers $\Z_F$.

A \emph{quaternion algebra} $B$ over $F$ is a central simple algebra of dimension $4$.  Equivalently, a quaternion algebra $B$ over $F$ is an $F$-algebra generated by elements $i,j$ satisfying
\begin{equation} \label{quateq}
i^2=a,\quad j^2=b,\quad \text{and}\quad ji=-ij
\end{equation}
for some $a,b \in F^\times$; we denote such an algebra $B=\quat{a,b}{F}$.  For more information about quaternion algebras, see Vign\'eras \cite{Vigneras}.  

Let $B$ be a quaternion algebra over $F$.  Then $B$ has a unique involution $\overline{\phantom{x}}:B \to B$ called \emph{conjugation} such that $x\overline{x} \in F$ for all $x \in B$; we define the \emph{reduced norm} of $x$ to be $\nrd(x)=x\overline{x}$.  For $B=\quat{a,b}{F}$ as in (\ref{quateq}) and $x=u+v i+z j+w ij\in B$, we have 
\[ \overline{x}=u-(v i+z j+w ij) \quad \text{and} \quad \nrd(x)=u^2-av^2-bz^2+abw^2. \]

A $\Z_F$-\emph{lattice} of $B$ is a finitely generated $\Z_F$-submodule $I$ of $B$ such that $FI=B$.  An \emph{order} $\calO$ of $B$ is a $\Z_F$-lattice which is also a subring of $B$.  A \emph{maximal order} of $B$ is an order which is not properly contained in any other order.  Let $\calO_0(1) \subseteq  B$ be a maximal order in $B$.  

A \emph{right fractional $\calO$-ideal} is a $\Z_F$-lattice $I$ such that its \emph{right order} $\calO_R(I)=\{x \in B : xI \subseteq  I\}$ is equal to $\calO$; left ideals are defined analogously.  

Let $K \supset F$ be a field containing $F$.  Then $B_K=B \otimes_F K$ is a quaternion algebra over $K$, and we say $K$ \emph{splits} $B$ if $B_K \cong \M_2(K)$.  

Let $v$ be a noncomplex place of $F$, and let $F_v$ denote the completion of $F$ at $v$.  Then there is a unique quaternion algebra over $F_v$ which is a division ring, up to isomorphism.  We say $B$ is \emph{unramified} (or \emph{split}) at $v$ if $F_v$ splits $B$, otherwise we say $B$ is \emph{ramified} at $v$.  The set $S$ of ramified places of $B$ is a finite set of even cardinality which characterizes $B$ up to isomorphism, and conversely given any such set $S$ there is a quaternion algebra over $B$ ramified exactly at the places in $S$.  We define the \emph{discriminant} $\frakD$ of $B$ to be the ideal of $\Z_F$ given by the product of all finite ramified places of $B$.

Let $\frakN \subseteq  \Z_F$ be an ideal which is coprime to the discriminant $\frakD$.  Then there is an isomorphism
\[ \iota_\frakN : \calO_0(1) \hookrightarrow \calO_0(1) \otimes_{\Z_F} \Z_{F,\frakN} \cong \M_2(\Z_{F,\frakN}) \]
where $\Z_{F,\frakN}$ denotes the completion of $\Z_F$ at $\frakN$.  Let 
\[ \calO_0(\frakN) = \{x \in \calO_0(1) : \iota_\frakN(x) \text{ is upper triangular modulo $\frakN$}\}; \]
the order $\calO_0(\frakN)$ is called an \emph{Eichler order} of level $\frakN$.  We will abbreviate $\calO=\calO_0(\frakN)$.

We number the real places $v_1,\dots,v_n$ of $F$ so that $B$ is split at $v_1,\dots,v_r$ and ramified at $v_{r+1},\dots,v_n$, so that
\[ B \otimes_\Q \R \cong \M_2(\R)^{r} \times \HH^{n-r} \]
where $\HH=\quat{-1,-1}{\R}$ is the division ring of Hamiltonians.  If $B$ is ramified at all real places (i.e.\ $r=0$) then we say that $B$ is \emph{(totally) definite}, and otherwise we say $B$ is \emph{indefinite}.  The arithmetic properties of the algebra $B$ and its forms are quite different according as $B$ is definite or indefinite, and so we consider these two cases separately.  Using an adelic language, one can treat them more uniformly (though to some extent this merely repackages the difference)---we refer to \S 8 for this approach.

\medskip

First, suppose that $B$ is indefinite, so that $r>0$.  The case $B\cong \M_2(\Q)$ corresponds to the classical case of elliptic modular forms; this was treated in \S 1, so we assume $B \not\cong \M_2(\Q)$.  Let 
\[ \iota_\infty: B \hookrightarrow \M_2(\R)^r \]
denote the map corresponding to the split embeddings $v_1,\dots,v_r$.  Then the group
\[ B_+^\times = \{ \gamma \in B^\times : \det \gamma_i = (\nrd \gamma)_i > 0 \text{ for $i=1,\dots,r$}\} \]
acts on $\calH^r$ by coordinatewise linear fractional transformations.  Let
\[ \calO_+^\times=\calO^\times \cap B_+^\times. \]
Under the assumption that $F$ has strict class number $1$, which we maintain, we have 
\[ \calO_+^\times = \Z_F^\times \calO^\times_1 \]
where $\calO^\times_1 = \{ \gamma \in \calO : \nrd(\gamma)=1\}$.  Let
\[ \Gamma=\Gamma_0^B(\frakN) = \iota_{\infty}(\calO_+^\times) \subseteq  \GL_2^+(\R)^r. \]
and let $\PGamma = \Gamma/\Z_F^\times$.  Then $\PGamma$ is a discrete subgroup of $\PGL_2^+(\R)^r$ which can be identified with
\[ \PGamma \cong \iota_{\infty}(\calO_1^\times)/\{\pm 1\} \subseteq  \PSL_2(\R). \]

\begin{defn} \label{quatcuspformindef}
Let $B$ be indefinite.  A \emph{quaternionic modular form} for $B$ of parallel weight $2$ and level $\frakN$ is a holomorphic function $f:\calH^r \to \C$ such that
\begin{equation} \label{slashfzquat}
f(\gamma z) = f\left(\frac{a_1 z_1 + b_1}{c_1 z_1 + d_1}, \dots, \frac{a_r z_r + b_r}{c_r z_r + d_r}\right) = \left(\prod_{i=1}^r \frac{(c_i z_i+d_i)^2}{\det \gamma_i} \right) f(z)
\end{equation}
for all $\gamma\in\Gamma_0^B(\frakN)$.
\end{defn}

Analogous to (\ref{fslshhilb}), we define
\begin{equation} \label{fslshquat}
(f \slsh{} \gamma)(z) = f(\gamma z) \prod_{i=1}^r \frac{\det \gamma_i}{j(\gamma_i, z)^2}
\end{equation}
for $f:\calH^r \to \C$ and $\gamma \in B_+^\times$; then (\ref{slashfzquat}) is equivalent to $(f \slsh{} \gamma)(z) = f(z)$ for all $\gamma \in \Gamma_0^B(\frakN)$.  

We denote by $M_2^B(\frakN)$ the space of quaternionic modular forms for $B$ of parallel weight $2$ and level $\frakN$, a finite-dimensional $\C$-vector space.  

A quaternionic modular form for $B=\M_2(F)$ is exactly a Hilbert modular form over $F$; our presentation in these three sections has been consciously redundant so as to emphasize this similarity.  (We could recover the definition of cusp forms given in Section 1 if we also impose the condition that the form vanish at the cusps.)  As we will see later, this similarity is less apparent when the general and more technical theory is exposited.  

The Hecke operators are defined on $M_2^B(\frakN)$ following their definition in (\ref{Heckehilbertslsh}).  Let $\frakp$ be a prime of $\Z_F$ with $\frakp \nmid \frakN$, and let $p$ be a totally positive generator of $\frakp$.  Define
\[ \Theta(\frakp)=\calO_+^\times \backslash \left\{\pi \in\calO_{+}:\nrd(\pi)\Z_F=\frakp\right\}
=\calO_{+}^\times \backslash \left\{\pi\in\calO_{+}:\nrd(\pi)=p\right\}. \]
The set $\Theta(\frakp)$ has cardinality $N\frakp+1$.  The Hecke operator $T_\frakp$ is then given by
\begin{equation} \label{Heckeindefinite}
(T_\frakp f)(z) = \sum_{ \pi \in \Theta(\frakp)} (f \slsh{} \pi)(z).
\end{equation} 

The set $\Theta(\frakp)$ admits an explicit description as follows.  As above, let $\F_\frakp=\Z_F/\frakp$ be the residue field of $\frakp$, and let $\iota_\frakp:\calO \hookrightarrow \M_2(\Z_{F,\frakp})$ be a splitting.  Then the set $\Theta(\frakp)$ is in bijection with the set of left ideals of $\calO$ by $\pi \mapsto \calO\pi$.  This set of left ideals is in bijection \cite[Lemma 6.2]{KirschmerVoight} with the set $\PP^1(\F_\frakp)$: explicitly, given the splitting $\iota_\frakp$, the left ideal corresponding to $a=(x:y)\in\PP^1(\F_\frakp)$ is
\begin{equation} \label{Iaideals}
J_a = \calO\iota_\frakp^{-1}\begin{pmatrix} x & y \\ 0 & 0 \end{pmatrix} + \calO\frakp.
\end{equation}
By strong approximation \cite[Th\'eor\`eme III.4.3]{Vigneras}, each of the ideals $J_a$ is principal, so $J_a=\calO \pi_a$ with $\nrd(\pi_a)=p$ for all $a \in \PP^1(\F_\frakp)$.  Therefore, we have $\Theta(\frakp)=\{\pi_a : a \in \PP^1(\F_\frakp)\}$.  

This definition reduces to the one given in (\ref{Heckehilbertslsh}) for Hilbert modular forms with the choices $\pi_\infty=\begin{pmatrix} p & 0 \\ 0 & 1 \end{pmatrix}$ and $\pi_a=\begin{pmatrix} 1 & a \\ 0 & p \end{pmatrix}$ for $a \in \F_\frakp$.

Having treated Hilbert modular forms in the previous section, now suppose that $B \not\cong \M_2(F)$, or equivalently that $B$ is a division ring. Then a modular form is vacuously a cusp form as there are no cusps!  We then refer to quaternionic modular forms equally well as \emph{quaternionic cusp forms} and let $S_2^B(\frakN)=M_2^B(\frakN)$.  Here, a cusp form $f$ gives a holomorphic differential $r$-form $(2\pi i)^r f(z_1,\dots,z_r)\, dz_1 \cdots dz_r$ on the associated \emph{quaternionic Shimura variety} $X_0^B(\frakN) = \Gamma_0^B(\frakN) \backslash \calH^r$, a complex variety of dimension $r$.  

The important case for us will be when $r=1$.  Then $\Gamma_0^B(\frakN) \subseteq  \PGL_2^+(\R)$ acts on the upper half-plane and the quotient $\Gamma_0^B(\frakN) \backslash \calH$ can be given the structure of a Riemann surface, known as a \emph{Shimura curve}.  In this simple case, a cusp form for $B$ is simply a holomorphic map $f:\calH \to \C$ such that $f(\gamma z)=(c_1z+d_1)^2 f(z)$ for all $\gamma \in \Gamma_0^B(\frakN)$, where $\gamma_1=v_1(\gamma)=\begin{pmatrix} a_1 & b_1 \\ c_1 & d_1 \end{pmatrix}$ and $v_1$ is the unique split real place of $F$.

\medskip

Next, suppose that $B$ is definite, so that $r=0$.  Recall that $\calO=\calO_0(\frakN) \subseteq  \calO_0(1)$ is an Eichler order of level $\frakN$.  A right fractional $\calO$-ideal is \emph{invertible} if there exists a left fractional $\calO$-ideal $I^{-1}$ such that $I^{-1}I=\calO$, or equivalently if $I$ is \emph{locally principal}, i.e., for each (finite) prime ideal $\frakp$ of $\Z_F$, the ideal $I_\frakp$ is a principal right $\calO_\frakp$-ideal.  If $I$ is invertible, then necessarily $I^{-1}=\{x \in B: xI \subseteq  \calO\}$.

Let $I,J$ be invertible right fractional $\calO$-ideals.  We say that $I$ and $J$ are in the same \emph{right ideal class} (or are \emph{isomorphic}) if there exists an $x \in B^\times$ such that $I=x J$, or equivalently if $I$ and $J$ are isomorphic as right $\calO$-modules.  We write $[I]$ for the equivalence class of $I$ under this relation and denote the set of invertible right $\calO$-ideal classes by $\Cl \calO$.  The set $\Cl \calO$ is finite and $H=\#\Cl \calO$ is independent of the choice of Eichler order $\calO=\calO_0(\frakN)$ of level $\frakN$.

\begin{defn}
Let $B$ be definite.  A \emph{quaternionic modular form} for $B$ of parallel weight $2$ and level $\frakN$ is a map
\[ f:\Cl \calO_0(\frakN) \to \C. \]
\end{defn}

The space of quaternionic modular forms $M_2^B(\frakN)$ is obviously a $\C$-vector space of dimension equal to $H$.  

A modular form for $B$ which is orthogonal to the ($1$-dimensional) subspace of constant functions is called a \emph{cusp form} for $B$; the space of such forms is denoted $S_2^B(\frakN)$.

The Hecke operators are defined on $M_2^B(\frakN)$ as follows. Let $\frakp$ be a prime ideal of $\Z_F$ with $\frakp \nmid \frakN$.  For a right $\calO$-ideal $I$ with $\nrd(I)$ coprime to $\frakp$, the Hecke operator $T_\frakp$ is given by
\begin{equation} \label{heckedefdefinite}
(T_\frakp f)([I]) = \sum_{\substack{J \subseteq  I \\ \nrd(J I^{-1})=\frakp}} f([J]),
\end{equation}
the sum over all invertible right $\calO$-ideals $J \subseteq I$ such that $\nrd(J)=\frakp \nrd(I)$.  As in (\ref{Heckeindefinite}), this sum is also naturally over $\PP^1(\F_\frakp)$, indexing the ideals of norm index $\frakp$.  This definition does not depend on the choice of representative $I$ in its ideal class and extends by linearity to all of $S_2^B(\frakN)$.

\medskip

Consequent to the definitions in the previous paragraphs, we may now consider the Hecke modules of quaternionic cusp forms over $F$ for the different quaternion algebras $B$ over $F$.  These spaces are related to each other, and thus to spaces of Hilbert modular forms, according to their arithmetic invariants by the \emph{Jacquet-Langlands correspondence} as follows.

\begin{thm}[Eichler-Shimizu-Jacquet-Langlands]\label{shimizuclno1} 
Let $B$ be a quaternion algebra over $F$ of discriminant $\frakD$ and let $\frakN$ be an ideal coprime to $\frakD$.  Then
there is an injective map of Hecke modules
\[ S_{2}^{B}(\frakN) \hookrightarrow S_{2}(\frakD\frakN) \] 
whose image consists of those Hilbert cusp forms which are new at all primes $\frakp \mid \frakD$.
\end{thm}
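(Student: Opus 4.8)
The plan is to deduce this theorem from the representation-theoretic Jacquet--Langlands correspondence, which is the natural home for such a statement, and then to translate between the adelic/automorphic language and the classical language of the Hecke modules defined above. First I would invoke the strong multiplicity one theorem and the classification of automorphic representations of $\GL_2$ over a number field: the space $S_2(\frakD\frakN)$ decomposes, as a Hecke module, into isotypic pieces indexed by cuspidal automorphic representations $\pi$ of $\GL_2(\mathbf{A}_F)$ with parallel weight $2$ at infinity and with a vector fixed by the relevant open compact subgroup determined by $\frakD\frakN$. Likewise, $S_2^B(\frakN)$ decomposes into pieces indexed by automorphic representations $\pi'$ of $B^\times(\mathbf{A}_F)$ of the corresponding weight and level. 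The classical Hecke operators $T_\frakp$ for $\frakp\nmid\frakD\frakN$ act on each isotypic piece as the scalar $a_\frakp$ extracted from the Satake parameter of $\pi_\frakp$ (this is exactly the content of the formulas~\eqref{Heckehilbert}, \eqref{heckedefdefinite} once one unwinds the double coset description of $T_\frakp$), so matching Hecke modules amounts to matching systems of Satake parameters away from $\frakD\frakN$.

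The heart of the argument is then the global Jacquet--Langlands correspondence: there is a bijection $\pi' \leftrightarrow \pi = \mathrm{JL}(\pi')$ between the automorphic representations of $B^\times$ occurring in $S_2^B(\frakN)$ and a subset of the cuspidal automorphic representations of $\GL_2(\mathbf{A}_F)$, characterized by $\pi'_v \leftrightarrow \pi_v$ under the local correspondence at every place $v$. At the split places $v$ (including all $v\nmid\frakD$ and all archimedean $v$ where $B$ is split) the local correspondence is the identity, so $\pi$ and $\pi'$ have the same local components, hence the same Satake parameters, away from $\frakD$; this gives the required Hecke-equivariance of the map and its injectivity, since a cuspidal $\pi$ is determined by its Satake parameters outside any finite set by strong multiplicity one. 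At the ramified finite places $\frakp\mid\frakD$, the local Jacquet--Langlands correspondence sends the (finitely many) admissible representations of $B_\frakp^\times$ to the discrete series representations of $\GL_2(F_\frakp)$; the ones with a nonzero $\calO_\frakp^\times$-fixed vector (i.e., the unramified twists of the Steinberg, when $\frakp\nmid\frakN$, or more generally those compatible with the Eichler level at $\frakp$) correspond precisely to the $\GL_2(F_\frakp)$-representations that are \emph{new} at $\frakp$ in the sense that they have a vector fixed by the depth-one Iwahori-type subgroup but no spherical vector. Tracking the conductors through the local correspondence is what pins down the image as exactly the forms that are new at every $\frakp\mid\frakD$, and it is also what forces the level on the $\GL_2$-side to be $\frakD\frakN$ rather than merely $\frakN$.

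At archimedean places that are \emph{ramified} for $B$ (in the definite case, all of them), the local correspondence matches the one-dimensional representations of $\HH^\times$ of the appropriate weight with the weight-$2$ discrete series of $\GL_2(\R)$, which is exactly why the trivial-weight maps $f\colon\Cl\calO\to\C$ on the definite side correspond to parallel weight $2$ Hilbert modular forms; one checks that the constant functions, which we have excluded from $S_2^B(\frakN)$, correspond to the one-dimensional (non-cuspidal) automorphic representations of $\GL_2$, so the cuspidality condition matches on both sides. The main obstacle, and the step I expect to require the most care, is the local conductor/newform bookkeeping at the primes $\frakp\mid\frakD$: one must verify that the space of $\calO_0(\frakN)_\frakp^\times$-fixed vectors in $\pi'_\frakp$ has the same dimension as the space of $\frakp$-new vectors in $\mathrm{JL}(\pi'_\frakp)$ at level $\frakD\frakN$, for every possible local type, and that this dimension is compatible with the global multiplicity-one statements so that the resulting map of Hecke modules is genuinely injective with the stated image. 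This is standard but fiddly; alternatively, for weight $2$ and squarefree $\frakD$ one can shortcut the analytic input by citing the comparison of trace formulas (Eichler's original approach, refined by Shimizu), which directly yields equality of Hecke traces on $S_2^B(\frakN)$ and on the $\frakD$-new subspace of $S_2(\frakD\frakN)$ and hence the isomorphism of Hecke modules.
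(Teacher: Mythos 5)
Your proposal is correct and follows essentially the same route as the paper, which does not give an independent argument but simply cites Jacquet--Langlands, Gelbart--Jacquet, and Hida --- the last of whom deduces the theorem from the representation-theoretic correspondence exactly as you outline (global JL matching local components, identity at split places, conductor/newform bookkeeping at the primes dividing $\frakD$ and at the ramified archimedean places). The one step you rightly flag as delicate, the local comparison of fixed-vector dimensions at $\frakp \mid \frakD$, is precisely what is carried out in those references, so nothing further is needed.
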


\begin{proof} 
See Jacquet and Langlands~\cite[Chap. XVI]{jacqlang}, Gelbart and Jacquet \cite[\S 8]{geljac} and work of Hida~\cite{hida}; another useful reference is Hida \cite[Proposition 2.12]{HidaCM} who deduces Theorem \ref{shimizuclno1} from the representation theoretic results of Jacquet and Langlands.
\end{proof}

Theorem~\ref{shimizuclno1} yields an isomorphism 
\[ S_{2}^B(\frakN) \cong S_{2}(\frakN) \] 
when the quaternion algebra $B$ has discriminant $\frakD=(1)$.  Since a quaternion algebra must be ramified at an even number of places, when $n=[F:\Q]$ is even we can achieve this for the definite quaternion algebra $B$ which is ramified at exactly the real places of $F$ (and no finite place).  When $n$ is odd, the simplest choice is to instead take $B$ to be ramified at all but one real place of $F$ (and still no finite place), and hence $B$ is indefinite (and $g=1$).

\begin{rmk} \label{rmkgenspace}
Note that in general a space of newforms can be realized as a Hecke module inside many different spaces of quaternionic cusp forms.  Indeed, for any factorization $\frakM=\frakD \frakN$ with $\frakD$ squarefree and $\frakN$ coprime to $\frakD$, we consider a quaternion algebra $B$ of discriminant $\frakD$ (ramified at either all or all but one real place of $F$) and thereby realize $S_2^B(\frakN) \cong S_2(\frakM)^{\frakD\text{-new}}$.  For example, if $p,q$ are (rational) primes, then the space $S_2(pq)^{\text{new}}$ of classical newforms can, after splitting off old subspaces, be computed using an indefinite quaternion algebra of discriminant $1$ or $pq$ (corresponding to a modular curve or a Shimura curve, respectively) or a definite quaternion algebra of discriminant $p$ or $q$.
\end{rmk}

Our main conclusion from this section is that to compute spaces of Hilbert cusp forms it suffices to compute instead spaces of quaternionic cusp forms.  The explicit description of $S_2^B(\frakN)$ as a Hecke module varies according as if $B$ is definite or indefinite.

\section{Definite method}\label{sec:definite-classno1}

In this section, we discuss a method for computing Hilbert modular forms using a definite quaternion algebra $B$.  We continue with our notation and our assumption that $F$ has strict class number $1$.  We accordingly call the method in this section the \emph{definite method}: it is a generalization of the method of Brandt matrices mentioned briefly in \S 2 and was first exhibited by Eichler \cite{Eichler} and Pizer~\cite{Pizer} for $F=\Q$, but the first explicit algorithm was given by Socrates and Whitehouse \cite{SocratesWhitehouse}.

Let $I_1,\dots,I_H$ be a set of representative right ideals for $\Cl \calO$, with $H=\# \Cl \calO$. As vector spaces, we  have simply that 
\[ M_2^B(\frakN)=\Map(\Cl \calO,\C) \cong \textstyle{\bigoplus_{i=1}^{H}} \C \, I_i, \]
associating to each ideal (class) its characteristic function.  Let $\calO_i=\calO_L(I_i)$ be the left order of $I_i$  and let $e_i=\# (\calO_i^\times/\Z_F^\times)$.  

The action of the Hecke operators is defined by (\ref{heckedefdefinite}): we define the $\frakp$th-\emph{Brandt matrix} $T(\frakp)$ for $\calO$ to be the matrix whose $(i,j)$th entry is equal to  
\begin{equation}
b(\frakp)_{i,j}=\#\{ J \subseteq  I_j : \nrd(JI_j^{-1}) = \frakp \text{ and } [J]=[I_i] \} \in \Z.
\end{equation}
The Brandt matrix $T(\frakp)$ is an $H \times H$-matrix with integral entries such that the sum of the entries in each column is equal to $N\frakp+1$.  The Hecke operator $T_\frakp$ then acts by $T(\frakp)$ on $\bigoplus_i \C I_i$ (on the right), identifying an ideal class with its characteristic function.  

The Brandt matrix is just a compact way of writing down the adjacency matrix of the graph   with vertices $X=\Cl \calO$   where there is a directed edge from $I_i$ to each ideal class which represents an ideal of index $N\frakp$ in $I_i$.  Indeed, consider the graph whose vertices are right $\calO$-ideals of norm a power of $\frakp$ and draw a directed edge from $I$ to $J$ if $\nrd(JI^{-1})=\frakp$.  Then this graph is a $k$-regular tree with $N\frakp+1$ edges leaving each vertex.  The above adjacency matrix is obtained by taking the quotient of this graph by identifying two ideals if they are in the same ideal class.

Alternatively, we may give an expression for the Brandt matrices in terms of elements instead of ideals.  A containment $J \subseteq I_j$ of right $\calO$-ideals with $[I_i]=[J]$ corresponds to an element $x \in JI_i^{-1} \subset I_j I_i^{-1}$ via $J = x I_i$, and we have $\nrd(JI^{-1})=\frakp$ if and only if $\nrd(x)\Z_F=\frakp$.  

Writing $JI_i^{-1}=x\calO_i$, we see that $x$ is unique up to multiplication on the right by $\calO_i^\times$.  We have $\calO_i^\times = (\calO_i)_1^\times \Z_F^\times$ and $(\calO_i)_1^\times \cap \Z_F^\times = \{\pm 1\}$, so $2e_i=\# (\calO_i)_1^\times$.  To eliminate the contribution of the factor $\Z_F^\times$, we normalize as follows: let $p$ be a totally positive generator for $\frakp$ and similarly $q_i$ for $\nrd(I_i)$ for $i=1,\dots,H$.  Then $x \in I_j I_i^{-1}$ can be chosen so that $\nrd(x)(q_j/q_i) = p$ and is unique up to multiplication by $(\calO_i)_1^\times$.  
\begin{equation} \label{bijquad}
b(\frakp)_{i,j}=\frac{1}{2e_i} \#\left\{ x \in I_j I_i^{-1} : \nrd(x)\frac{q_j}{q_i} = p \right\}.
\end{equation}

The advantage of the expression (\ref{bijquad}) is that it can be expressed simply in terms of a quadratic form.  Since $B$ is definite, the space $B \hookrightarrow B \otimes_{\Q} \R \cong \HH^n \cong \R^{4n}$ comes equipped with the positive definite quadratic form $\Tr \nrd : B \to \R$.  If $J$ is a $\Z_F$-lattice, then $J \cong \Z^{4n}$ embeds as a Euclidean lattice $J \hookrightarrow \R^{4n}$ with respect to this quadratic form.  It follows that one can compute $b(\frakp)_{i,j}$ by computing all elements $x \in I_i I_j^{-1}$ such that $\Tr (q_j/q_i)\nrd(x) \leq \Tr p$, a finite set.

Before giving references for the technical details about how the Brandt matrices above are computed explicitly, we pause to give three examples.

\begin{exm} \label{defexmoverQQ}
Consider the quaternion algebra $B=\quat{-1,-23}{\Q}$, so that $B$ is generated by $i,j$ subject to $i^2=-1$, $j^2=-23$ and $ji=-ij$.  We have the maximal order $\calO=\calO_0(1)=\Z \oplus \Z i \oplus \Z k \oplus \Z ik$ where $k=(j+1)/2$.  We consider the prime $p=2$; we have an embedding $\calO \hookrightarrow \M_2(\Z_2)$ where $i,k \mapsto \begin{pmatrix} 0 & 1 \\ 1 & 0 \end{pmatrix}, \begin{pmatrix} 1 & 0 \\ 0 & 0 \end{pmatrix} \pmod{2}$.  

We begin by computing the ideal classes in $\calO$.  We start with $C_1=[\calO]$.  We have $3$ ideals of norm $2$, namely $I_{(0:1)}=2\calO + ik \calO$, $I_{(1:1)}=2\calO + (i+1)k\calO$, and $I_{(1:0)}=2\calO + k\calO$.  If one of these ideals is principal, then it is generated by an element of reduced norm $2$.  The reduced norm gives a quadratic form
\begin{align*}
\nrd:\calO &\to \Z \\
x+yi+zk+wik &\mapsto x^2+xz+y^2+yw+6z^2+6w^2
\end{align*}
We see immediately that $\nrd(x+yi+zk+wik)=2$ if and only if $z=w=0$ and $x=y= \pm 1$.  This shows that $I_{(1:1)}=(i+1)\calO$ is principal but $I_{(1:0)}$ and $I_{(0:1)}$ are not.  Note also that we find $2e_1=4$ solutions matching (\ref{bijquad}).  We notice, however, that $iI_{(1:0)} = I_{(0:1)}$, so we have just a second ideal class $C_2=[I_{(0:1)}]$.  

Now of the two ideals contained in $I_{(0:1)}$ of norm $4$, we have $I_{(0:1)}^{(4)}=4\calO + i(k+2)\calO$ belongs to $C_1$ whereas $I_{(2:1)}^{(4)}=4\calO + (2i+2k+ik)\calO$ gives rise to a new ideal class $C_2=[I_{(2:1)}]$.  If we continue, each of the two ideals contained in $I_{(0:1)}^{(4)}$ of norm $8$ belong to $C_1$, and it follows that $H=\# \Cl \calO=3$.  

From this computation, we have also computed the Brandt matrix $T(2)=\begin{pmatrix} 1 & 1 & 0 \\ 2 & 1 & 3 \\ 0 & 1 & 0 \end{pmatrix}$.  Indeed, the first column encodes the fact that of the three right $\calO$-ideals of reduced norm $2$, there is one which is principal and hence belongs to $C_1$ and two that belong to $C_2$.  We think of this matrix as acting on the right on row vectors.  

The characteristic polynomial of $T(2)$ factors as $(x-3)(x^2+x-1)$.  The vector $(1,1,1)$ is an eigenvector with eigenvalue $3$ which generates the space of constant functions and gives rise to the Eisenstein series having eigenvalues $a_p=p+1$ for all primes $p \neq 23$.  The space of cusp forms $S_2^B(1)$ is correspondingly of dimension $2$ and is irreducible as a Hecke module. The Hecke module $S_2^B(1)$ can be explicitly identified with $S_2(23)$ using theta correspondence.  For example, the series
\[ \theta_1(q) = \sum_{\gamma \in \calO} q^{\nrd(\gamma)} = \sum_{x,y,z,w \in \Z} q^{x^2+xz+y^2+yw+6z^2+6w^2} = 1 + 4q^2 + 4q^4 + 4q^8 + 8q^{10} + \dots \]
is the $q$-expansion of a modular form of level $23$ and weight $2$ and corresponds to (the characteristic of) $C_1$.  
For more details, we refer to Pizer~\cite[Theorem 2.29]{Pizer}, where the first computer algorithm for computing Brandt matrices over $\Q$ is also described.  
\end{exm}

Now we give an example over a quadratic field.

\begin{exm} \label{defexmqsrt5}
Let $F=\Q(\sqrt{5})$.  Then $\Z_F=\Z[w]$ where $w=(1+\sqrt{5})/2$ satisfying $w^2-w-1=0$.  The prime $61$ splits in $F$; let $\frakN=(3w+7)\Z_F$ be one of the primes above it.  We consider the (Hamilton) quaternion algebra $B=\quat{-1,-1}{F}$ over $F$ of discriminant $\frakD=(1)$.  We have the maximal order
\[\calO_0(1) = \Z_F\oplus i \Z_F \oplus k \Z_F \oplus ik \Z_F, \quad \text{ where } \quad k=\frac{(1 + w) + w i + j}{2}, \]
and the Eichler order $\calO \subseteq  \calO_0(1)$ of level $\frakN$ given by
\[\calO = \Z_F\oplus (3w+7)i \Z_F \oplus (-30i + k)\Z_F \oplus (w+20i+ik)\Z_F .\]

The class number of $\calO$ is $H=3$.  The following ideals give a set of representatives for $\Cl \calO$: we take $I_1=\calO$, 
\begin{align*}
I_2 &= 2\calO + ((w+2) - (2w+2)i + (-1+3w)ik)\calO \\
&= 2\Z_F \oplus (6w+14)i\Z_F \oplus ((w+1) + (-2w+5)i - k)\Z_F \oplus (1 -6i + wik)\Z_F 
\end{align*}
and $I_3 = 2\calO + ((w+1) + (1-w)i + (2-2w)k)\calO$.


We compute the orders $e_i=\#(\calO_i/\Z_F^\times)$ as $e_1=2$, $e_2=5$ and $e_3=3$.  For example, the element $u=(2w)i - k - wik \in \calO$ satisfies the equation $u^2+1=0$, and so yields an element of order $2$ in $\calO^\times/\Z_F^\times$.  It follows that none of these orders are isomorphic (i.e., conjugate) in $B$.   

The first few Brandt matrices are:
\begin{gather*}   
T(2)=\begin{pmatrix} 1& 5& 3\\ 2& 0& 0\\ 2& 0& 2\end{pmatrix}, \quad T(\sqrt{5})=\begin{pmatrix}4& 0&  3\\ 0& 1& 3\\ 2& 5& 0\end{pmatrix}, \quad T(3)=\begin{pmatrix}4& 5& 6\\ 2& 0& 3\\ 4& 5& 1\end{pmatrix}, \\
T(w+3)=\begin{pmatrix}4& 10& 6\\ 4& 2& 0\\ 4& 0& 6\end{pmatrix}, \quad T(w-4)=\begin{pmatrix}6& 5& 6\\ 2& 2& 3\\ 4& 5& 3\end{pmatrix}.
\end{gather*}
We note that $N(w+3)=N(w-4)=11$.

For example, the first column of the matrix $T(2)$ records the fact that of the $5=N(2)+1$ right $\calO$-ideals of norm $(2)$, there is exactly one which is principal, two are isomorphic to $I_2$ and the other two are isomorphic to $I_3$.  

The space $S_2^B(\frakN)$ of cusp forms is an irreducible 2-dimensional Hecke module, represented by a constituent form $f$ with corresponding eigenvector $(2,5w,-3w-3)$.  In particular, the ring of Hecke operators $\T_f=\Z[a_\frakp]$ restricted to $f$ is equal to $\T_f=\Z[w]$, by coincidence.  We have the following table of eigenvalues for $f$.
\begin{align*}
\begin{array}{c||ccccc}
\frakp & (2) & (w+2) & (3) & (w+3) & (w-4) \\
N\frakp& 4& 5& 9& 11 & 11 \\ \hline
\rule{0pt}{2.5ex} 
a_\frakp(f)& 2w - 2& -3w + 1& -w - 2& 4w - 2 & -w
\end{array}
\end{align*}
For further discussion of the geometric objects which arise from this computation, see the discussion in Section~\ref{sec:examples-1}.
\end{exm}

Finally, one interesting example.

\begin{exm}
Let $F=\Q(\sqrt{15})$ and let $\frakN=(5,\sqrt{15})$.  Then there exists a cusp form constituent of an irreducible space of dimension $8$ in $S_2(\frakN)$ such that no single Hecke eigenvalue generates the entire field $E$ of Hecke eigenvalues.   
\[
\xymatrix@R=2ex@C=1ex{
& & E_{\textup{gal}}=\Q(\sqrt{17},i,\sqrt{2},\sqrt{u}) \ar@{-}[d] \\
& & E=\Q(\sqrt{17},i,\sqrt{u}) \ar@{-}[dl] \ar@{-}[d] \ar@{-}[dr] \\
& \Q(i,\sqrt{17}) \ar@{-}[dl] \ar@{-}[d] \ar@{-}[dr] & 
\Q(\sqrt{17},\sqrt{u}) \ar@{-}[d] & \Q(\sqrt{17},\sqrt{-u}) \ar@{-}[dl] \\
\Q(\sqrt{-17}) \ar@{-}[dr] & \Q(i) \ar@{-}[d] & \Q(\sqrt{17}) \ar@{-}[dl] \\
& \Q
}   \]
Here, $u=(5+\sqrt{17})/2$ and $E_{\textup{gal}}$ is the Galois closure of $E$.  Each Hecke eigenvalue $a_\frakp$ for this form $f$ generates a proper subfields of $E$.  (There are also examples of this phenomenon over $\Q$, and they are related to the phenomenon of inner twists; this was analyzed over $\Q$ by Koo, Stein, and Wiese \cite{KooStein}.)
\end{exm}

With these examples in hand, we now give an overview of how these computations are performed; for more detail, see work of the first author \cite{Dembeleclassno1}.  It is clear we need several algorithms to compute Brandt matrices.  First, we need a basic suite of algorithms for working with quaternion orders and ideals; these are discussed in work of Kirschmer and the second author \cite[Section 1]{KirschmerVoight}, and build on basic tools for number rings by Cohen \cite{Cohen1}.  As part of this suite, we need a method to compute a maximal order, which is covered by work of the second author \cite{VoightMaxOrder}.  Next, we need to compute a set of representatives for $\Cl \calO$ and to test if two right $\calO$-ideals are isomorphic: this is covered by Kirschmer and the second author \cite[Sections 6--7]{KirschmerVoight}, including a runtime analysis.  To compute a set of representatives, we use direct enumeration in the tree as in Example \ref{defexmoverQQ} and a mass formula due to Eichler as an additional termination criterion.  To test for isomorphism, we use lattice methods to find short vectors with respect to the quadratic form $\Tr \nrd$.

\medskip

In this method, to compute with level $\frakN$ one must compute a set of representatives $\Cl \calO=\Cl \calO_0(\frakN)$ anew.  The first author has given an improvement on this basic algorithm, allowing us to work always with ideal classes $\Cl \calO_0(1)$ belonging to the maximal order at the small price of a more complicated description of the Hecke module.  The proof of correctness for this method is best explained in the adelic language, so we refer to Section 8 for more detail.

Let $I_1,\dots,I_h$ be representatives for $\Cl \calO_0(1)$ and suppose that $\frakN$ is relatively prime to $\nrd(I_i)$ for each $i$---this is made possible by strong approximation.  As before, let $\frakp$ be a prime of $\Z_F$ with $\frakp \nmid \frakD$.  Let $p$ be a totally positive generator for $\frakp$, and let $q_i$ be a totally positive generator for $\nrd(I_i)$.  By our notation, we have $\calO_L(I_i) = \calO_0(1)_i$.  Let $\Gamma_i = \calO_0(1)_i^\times/\Z_F^\times$.  For each $i,j$, consider the set  
\[ \Theta(\frakp)_{i,j}=\Gamma_j \big\backslash\left\{x \in I_j I_i^{-1} : \nrd(x)\frac{q_i}{q_j} = p \right\} \]
where $\Gamma_j$ acts by the identification $\calO_0(1)_i^\times/\Z_F^\times = \calO_0(1)_1^\times/\{\pm 1\}$.  Via a splitting isomorphism 
\[ \iota_\frakN : \calO_0(1) \hookrightarrow \calO_0(1) \otimes \Z_{F,\frakN} \cong \M_2(\Z_{F,\frakN}), \]
the group $\calO_0(1)^\times$ acts on $\PP^1(\Z_F/\frakN)$ and since $\calO_0(1) \otimes \Z_{F,\frakN} \cong \calO_0(1)_i \otimes \Z_{F,\frakN}$ for each $i$ (since $\nrd(I_i)$ is prime to $\frakN$), the group $\Gamma_i=\calO_0(1)_i^\times/\Z_F^\times$ similarly acts on $\PP^1(\Z_F/\frakN)$.  

We then define a Hecke module structure on $\bigoplus_{i=1}^{h} \C[\Gamma_i \backslash \PP^1(\Z_F/\frakN)]$ via the map 
\begin{align*} 
\C[\Gamma_j\backslash \PP^1(\Z_F/\frakN)] &\to \C[\Gamma_i\backslash\PP^1(\Z_F/\frakN)]\\
\Gamma_j x &\mapsto \sum_{\gamma\in \Theta(\frakp)_{i,j}} \Gamma_i (\gamma x)
\end{align*} 
on each component.  It is a nontrivial but nevertheless routine calculation that this Hecke module is isomorphic to the Hecke module $M_2^B(\frakN)$ defined by the Brandt matrices at the beginning of this section.

\begin{exm} We keep the notations of Example~\ref{defexmqsrt5}. The quaternion algebra $B$ has class number 1, thus the maximal order $\calO_0(1)$ is unique up to conjugation. We have
$\Gamma=\calO_0(1)^\times/\Z_F^\times$ has cardinality $60$. We consider the splitting map
\[\bar{\iota}_\frakN : \calO_0(1)\to\calO_0(1) \otimes_{\Z_F}(\Z_F/\frakN) \cong \M_2(\Z_F/\frakN)\]
given by
\[\bar{\iota}_\frakN(i)=\begin{pmatrix}11& 0\\ 37& 50\end{pmatrix},\,\,\bar{\iota}_\frakN(k)=\begin{pmatrix}47& 58\\ 18& 33\end{pmatrix}, \,\,\bar{\iota}_\frakN(ik)=\begin{pmatrix}29& 28\\ 16& 14\end{pmatrix}
.\] (One directly verifies that $\bar{\iota}_\frakN(ik)=\bar{\iota}_\frakN(i)\bar{\iota}_\frakN(k)$.) We let $\Gamma$ act on $\PP^1(\Z_F/\frakN)$ on the left via $\bar{\iota}_\frakN$. By the above discussion, we have
\[M_2^B(\frakN)\cong \C[\Gamma\backslash\PP^1(\Z_F/\frakN)].\]
The action of $\Gamma$ on $\PP^1(\Z_F/\frakN)$ has three orbits which are represented by $x_1=(1:0)$, $x_2=(1:1)$ and $x_3=(23:1)$ whose stabilizers have cardinality $e_1=2$, $e_2=5$ and $e_3=3$.  Thus $M_2^B(\frakN)$ is a free module generated by the orbits $\Gamma x_1$, $\Gamma x_2$ and $\Gamma x_3$. Writing down the Hecke action in that basis, we obtain the same Hecke operators as in Example~\ref{defexmqsrt5}.
\end{exm}


\begin{rmk} 
The approach presented above has some advantages over the usual definition of Brandt matrices above.  First of all, it is better suited for working with more general level structures, such as those that do not come from Eichler orders.  For example, adding a character in this context is quite transparent.  Secondly, when working over the same number field, a substantial amount of the required data can be precomputed and reused as the level varies, and consequently one gains significantly in the efficiency of the computation.
\end{rmk}

\section{Indefinite method}

In this section, we discuss a method for computing Hilbert modular forms using a indefinite quaternion algebra $B$ with $r=1$.  We accordingly call our method the \emph{indefinite method}.  The method is due to Greenberg and the second author \cite{GreenbergVoight}.  We seek to generalize the method of modular symbols by working with (co)homology.  We continue to suppose that $F$ has strict class number $1$, and we assume that $B \not\cong \M_2(\Q)$ for uniformity of presentation.

Recall that in this case we have defined a group $\Gamma=\Gamma_0^B(\frakN) \subseteq  \GL_2^+(\R)$ such that $\PGamma=\Gamma/\Z_F^\times \subseteq  \PGL_2^+(\R)$ is discrete; the quotient $X=X_0^B(\frakN)=\Gamma \backslash \calH$ is a Shimura curve and quaternionic cusp forms on $B$ correspond to holomorphic differential $1$-forms on $X$.  Integration gives a Hecke-equivariant isomorphism which is the analogue of the Eichler-Shimura theorem, namely
\[ S_2^B(\frakN) \oplus \overline{S_2^B(\frakN)} \xrightarrow{\sim} H^1( X_0^B(\frakN), \C). \]
We recover $S_2^B(\frakN)$ by taking the $+$-eigenspace for complex conjugation on both sides.  Putting this together with the Jacquet-Langlands correspondence, we have
\[ S_2(\frakD\frakN)^{\frakD-\text{new}} \cong S_2^B(\frakN) \cong H^1( X^B_0(\frakN), \C)^+. \]

We have the identifications
\[ H^1(X,\C)=H^1(X^B_0(\frakN),\C) \cong H^1(\Gamma_0^B(\frakN),\C)=\Hom(\Gamma_0^B(\frakN),\C)=\Hom(\Gamma,\C). \]
To complete this description, we must relate the action of the Hecke operators.  Let $\frakp \nmid \frakD\frakN$ be prime and let $p$ be a totally positive generator of $\frakp$.  As in (\ref{Heckeindefinite}), let 
\begin{equation} \label{heckethetapclno1}
\Theta(\frakp)=\calO_{+}^\times \backslash \left\{\pi\in\calO_{+}:\nrd(\pi)=p\right\}
\end{equation}
and choose representatives $\pi_a$ for these orbits labeled by $a \in \PP^1(\F_\frakp)$.  Then any $\gamma \in \Gamma$ by right multiplication permutes the elements of $\Theta(\frakp)$, and hence there is a unique permutation $\gamma^*$ of $\PP^1(\F_\frakp)$ such that for all $a \in \PP^1(\F_\frakp)$ we have
\[ \pi_a \gamma = \delta_a \pi_{\gamma^* a} \]
with $\delta_a \in \Gamma$.  For $f \in H^1(\Gamma, \C)=\Hom(\Gamma,\C)$, we then define
\begin{equation} \label{Heckeindefiniteincohom}
(T_\frakp f)(\gamma)=\sum_{\pi_a \in \Theta(\frakp)} f(\delta_a).
\end{equation}
In a similar way, we compute the action of complex conjugation $T_\infty$ via the relation
\[ (T_\infty f)(\gamma) = f(\delta) \]
where $\mu \gamma = \delta \mu$ and $\mu \in \calO^\times \setminus \calO_1^\times$.

We begin with two examples, to illustrate the objects and methods involved.

\begin{exm}
Let $F=\Q(\sqrt{29})$.  Then $\Z_F=\Z[w]$ is the ring of integers of $F$ where $w=(1+\sqrt{29})/2$ satisfies $w^2-w-7=0$.  Indeed $F$ has strict class number $1$ and $u=w+2$ is a fundamental unit of $F$.  

Let $B=\quat{-1,u}{F}$, so $B$ is generated over $F$ by $i,j$ subject to $i^2=-1$, $j^2=u$, and $ji=-ij$.  The algebra $B$ is ramified at the prime ideal $2\Z_F$ and the nonidentity real place of $B$ (taking $\sqrt{29} \mapsto -\sqrt{29}$) and no other place.  The identity real place gives an embedding 
\begin{align*}
\iota_\infty: B &\hookrightarrow B \otimes_F \R \cong \M_2(\R) \\
i,j &\mapsto \begin{pmatrix} 0 & 1 \\ -1 & 0 \end{pmatrix}, \begin{pmatrix} \sqrt{u} & 0 \\ 0 & -\sqrt{u} \end{pmatrix} = \begin{pmatrix} 2.27... & 0 \\ 0 & -2.27... \end{pmatrix}
\end{align*}

Let 
\[ \calO=\calO_0(1)=\Z_F \oplus \Z_F i \oplus \Z_F j \oplus \Z_F k \] 
where $k=(1+i)(w+1+j)/2$.  Then $\calO$ is a maximal order of $B$ with discriminant $\frakD=2\Z_F$.  Let $\Gamma=\iota_\infty(\calO_1^\times) \subseteq \SL_2(\R)$ be as above and let $X = X(1)_\C = \Gamma \backslash \calH$ be the associated Shimura curve.  

Although they are not an intrinsic part of our algorithm, we mention that the area of $X$ (normalized so that an ideal triangle has area $1/2$) is given by
\[ A=\frac{4}{(2\pi)^{2n}} d_F^{3/2} \zeta_F(2) \Phi(\frakD) = \frac{4}{(2\pi)^4}\sqrt{29}^3 \zeta_F(2)(4-1) = \frac{3}{2} \]
where $\Phi(\frakD)=\prod_{\frakp \mid \frakD}(\N\frakp + 1)$, and the genus of $X$ is given by the Riemann-Hurwitz formula as
\[ A = 2g-2 + \sum_q e_q\left(1-\frac{1}{q}\right) \]
where $e_q$ is the number of elliptic cycles of order $q \in \Z_{\geq 2}$ in $\Gamma$.  An explicit formula for $e_q$ given in terms of class numbers and Legendre symbols yields $e_2=3$ and $e_q=0$ for $q>2$.  Thus $2g-2=0$, so $g=1$.  For more details on these formulas and further introduction, see work of the second author \cite{VoightShimClay} and the references given there.

Next, we compute a fundamental domain for $\Gamma$, yielding a presentation for $\Gamma$; we consider this as a black box for now.  The domain, displayed in the unit disc, is as follows.  

\begin{center}
\includegraphics{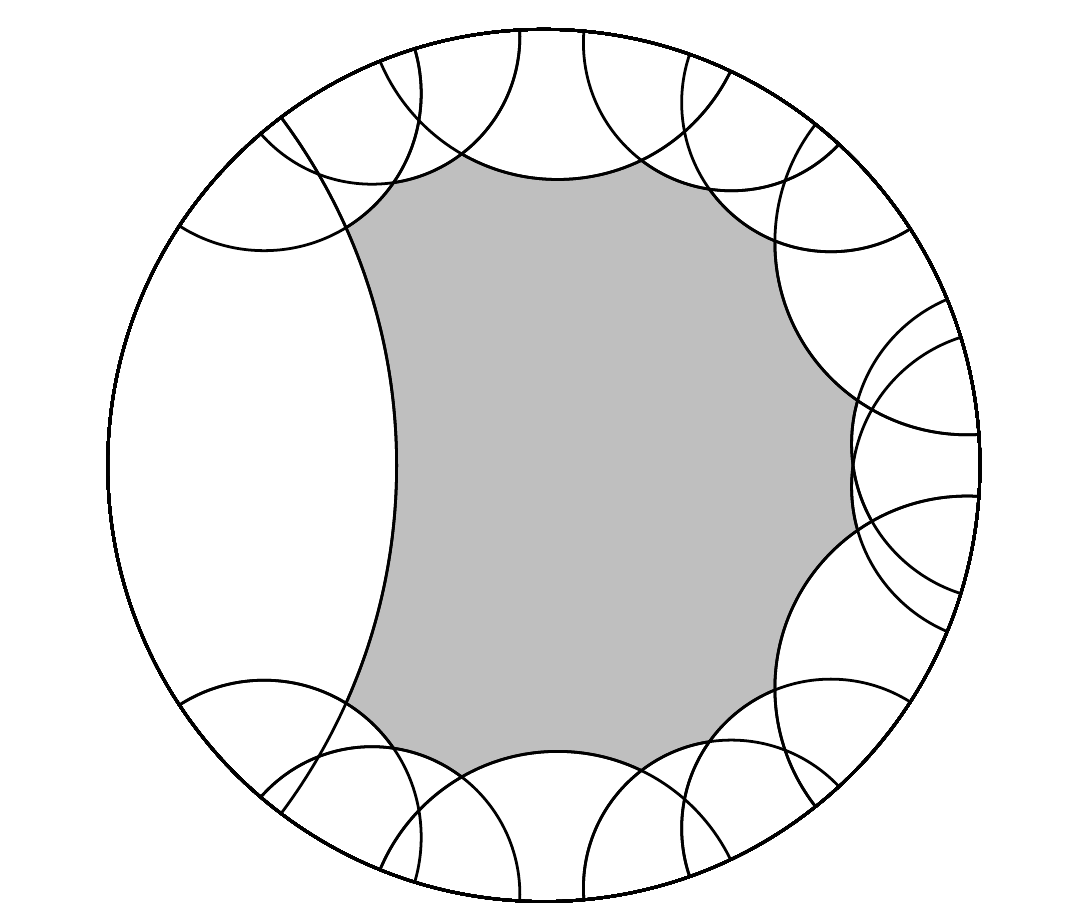}
\end{center}

We obtain the presentation
\[ \Gamma \cong \langle \gamma, \gamma', \delta_1, \delta_2, \delta_3 : \delta_1^2=\delta_2^2=\delta_3^2 = [\gamma,\gamma']\delta_1\delta_2\delta_3=1 \rangle \]
where 
\begin{align*}
\gamma &= -w-1+i-2j+k \\
\gamma' &= 2+2i+(w-1)j - (w-1) k \\
\delta_1 &= (2w+2) + wi+j+4k \\
\delta_2 &= i \\
\delta_3 &= (w+1)+(2w+3)i-j-k.
\end{align*} 

The above method gives the isomorphisms of Hecke modules 
\[ S_2(2\Z_F)^{\textup{new}} \cong S_2^B(1)=\{f : \calH \to \C : f(gz)\,d(gz) = f(z)\,dz \text{ for all $g \in \Gamma$}\} \cong H^1(\Gamma,\C)^+. \]
We compute that $H^1(\Gamma,\C)^+ \cong \Hom(\Gamma,\C)^+ = \C f$ where $f$ is the characteristic function of $\gamma$, i.e., $f(\gamma)=1$, $f(\gamma')=0$ and $f(\delta_i)=0$ for $i=1,2,3$.

We compute the Hecke operator $T_\frakp$ for $\frakp$ odd according the definition (\ref{Heckeindefiniteincohom}).  Let $\frakp=(w+1)\Z_F$.  Then $N\frakp = p = 5$.  We compute the action of $T_\frakp$ on $H^1(\Gamma,\C)^+$ given by $T_\frakp f = a_\frakp(f) f$.  The Hecke operators act as a sum over $p+1$ left ideals of reduced norm $p$, indexed by $\PP^1(\F_5)$.  Let
\begin{align*} 
\iota_\frakp : \calO &\hookrightarrow \M_2(\Z_{F,\frakp}) \cong \M_2(\Z_5) \\
i,j,ij &\mapsto \begin{pmatrix} 0 & 1 \\ -1 & 0 \end{pmatrix}, \begin{pmatrix} 1 & 0 \\ 0 & -1 \end{pmatrix}, \begin{pmatrix} 0 & -1 \\ -1 & 0 \end{pmatrix} \pmod{5}.
\end{align*}
Specifying the images modulo $\frakp$ gives them uniquely, as they lift to $\M_2(\Z_5)$ using Hensel's lemma.  (Note that $j^2=u \equiv 1 \pmod{w+1}$.)

Let $J_\infty,J_0,\dots,J_4$ be defined by
\[ J_a=J_{(x:y)} = \calO \iota_\frakp^{-1} \begin{pmatrix} x & y \\ 0 & 0 \end{pmatrix} + \frakp \calO \]
as in (\ref{Iaideals}).  Then $J_a=\calO \pi_a$ are principal left $\calO$-ideals by strong approximation.
For example, $J_0=\calO(i-ij) + (w+2)\calO = \calO \pi_0$ where $\pi_0=(-w+3)+wi+j+ij$.  

We compute the Hecke operators as in (\ref{Heckeindefiniteincohom}).  For $f:\Gamma \to \C$ and $\gamma \in \Gamma$, we compute elements $\delta_a \in \Gamma$ indexed by $a \in \PP^1(\F_\frakp)$ and $\gamma^*$ a permutation of $\PP^1(\F_\frakp)$ such that $\pi_a \gamma = \delta_a\pi_{\gamma^* a} $ for all $a \in \PP^1(\F_\frakp)$; then 
\[ (T_\frakp f)(\gamma) = \sum_{a \in \PP^1(\F_\frakp)} f(\delta_a). \]
The contribution to the sum for $f$ simply counts the number of occurrences of $\gamma$ in the product $\pi_a \gamma \pi_{\gamma^*a}=\delta_a \in \Gamma$.  Carrying out this computation for various primes, we obtain the following table.
\[
\begin{array}{c||ccccccccccc}
N\frakp & 5 & 7 & 9 & 13 & 23 & 29 & 53 & 59 & 67 & 71 & 83 \\
\hline
\rule{0pt}{2.5ex} 
a_\frakp(f) & 1 & -2 & 5 & -1 & -6 & 10 & -1 & 10 & 8 & -8 & 13 \\
N\frakp+1-a(\frakp) & 5 & 10 & 5 & 15 & 30 & 20 & 55 & 50 & 60 & 80 & 70
\end{array}  
\] 
Here we list only the norm of the prime as the eigenvalue does not depend on the choice of prime $\frakp$ of the given norm.  This suggests that $f$ corresponds to a base change of a form from $\Q$.

So we look through tables of elliptic curves over $\Q$ whose conductor is divisible only by $2$ and $29$.  We find the curve $E$ labelled \textsf{1682c1}, where $1682=2\cdot 29^2$, given by
\[ E: y^2+xy=x^3+x^2-51318x-4555676. \]
Let $E_F$ denote the base change of $E$ to $F$.  We compute that the twist $E_F'$ of $E_F$ by $-u\sqrt{29}$, given by
\[ E_F': y^2+(w+1)xy = x^3 + (-w+1)x^2+(-11w-20)x+(23w+52) \] 
has conductor $2\Z_F$.  Since the extension $F/\Q$ is abelian, by base change theorems we know that there exists a Hilbert cusp form associated to $E_F'$ over $F$ which is new of level $2\Z_F$, which therefore must be equal to $f$.  This verifies that the Jacobian $J(1)$ of $X(1)$ is isogenous to $E_F'$.  We verify that $\#E_F'(\F_\frakp)=N\frakp+1-a(\frakp)$ and (as suggested by the table) that $E$ has a $5$-torsion point, $(-1,-2w-5)$ (and consequently so too does $J(1)$).
 \end{exm}

\begin{exm} \label{indefexmqsrt5}
To illustrate the Jacquet-Langlands correspondence (Theorem \ref{shimizuclno1}) in action, we return to Example \ref{defexmqsrt5}.  Recall $F=\Q(\sqrt{5})$ and $w=(1+\sqrt{5})/2$.  We find the quaternion algebra $B=\quat{w,-(3w+7)}{F}$ which is ramified at $\frakN=(3w+7)\Z_F$, a prime of norm $61$, and one infinite place.  
The order $\calO=\calO_0(1)$ with $\Z_F$-basis $1,i,k,ik$, where $k=((w+1)+wi+j)/2$, is maximal.  As above, we compute that $A(X)=10$ and $g(X)=2=\dim S_2^B(1)=\dim S_2(\frakN)\spnew$.  Now we have the following fundamental domain:

\begin{center}
\includegraphics{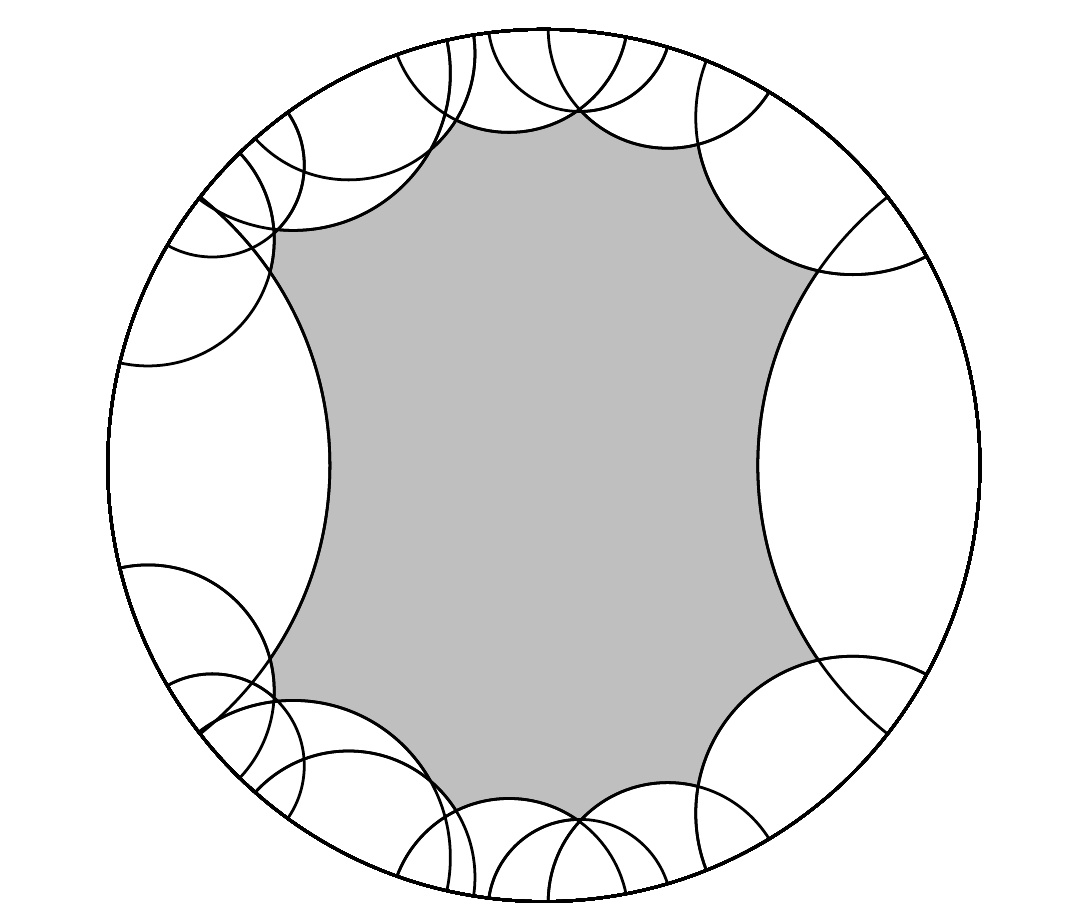}
\end{center}

We obtain the presentation 
\[ \Gamma(1) \cong \langle \gamma_1,\gamma_1',\gamma_2,\gamma_2' : [\gamma_1,\gamma_1'][\gamma_2,\gamma_2'] = 1 \rangle. \]
(In particular, $\Gamma(1)$ is a torsion-free group.)
We have $\dim H^1(X,\C)=4$, and on the basis of characteristic functions given by $\gamma_1,\gamma_1',\gamma_2,\gamma_2'$, the action of complex conjugation is given by the matrix $\begin{pmatrix} 1 & 1 & 0 & 0 \\ 0 & -1 & 0 & 0 \\ 0 & 1 & 0 & -1 \\ 0 & -1 & -1 & 0 \end{pmatrix}$: this is computed in a way 
We thus obtain a basis for $H=H^1(X,\C)^+ \cong S_2^B(1)$.  Computing Hecke operators as in the previous example, we find as in Example \ref{defexmqsrt5} that $S_2^B(1)$ is irreducible as a Hecke module, and find that
\begin{gather*} 
H \mid T_2 = \begin{pmatrix} 2 & 2 \\ -2 & -4 \end{pmatrix}, \quad 
H \mid T_{\sqrt{5}} = \begin{pmatrix} -5 & -3 \\ 3 & 4 \end{pmatrix}, \quad 
H \mid T_{3} = \begin{pmatrix} -4 & -1 \\ 1 & -1 \end{pmatrix}, \\
H \mid T_{w+3} = \begin{pmatrix} 6 & 4 \\ -4 & -6 \end{pmatrix}, \quad 
H \mid T_{w-4} = \begin{pmatrix} -2 & -1 \\ 1 & 1 \end{pmatrix}.
\end{gather*}
Happily, the characteristic polynomials of these operators agree with those computed using the definite method.
\end{exm}

We now give an overview of how these computations are performed: for more details, see the reference by Greenberg and the second author \cite{GreenbergVoight}.  To compute effectively the systems of Hecke eigenvalues in the cohomology of a Shimura curve, we need several algorithms.  First, we need to compute an explicit finite presentation of $\Gamma$ with a solution to the word problem in $\Gamma$, i.e., given $\delta \in \Gamma$, write $\delta$ as an explicit word in the generators for $\Gamma$.  Secondly, we need to compute a generator (with totally positive reduced norm) of a left ideal $I \subseteq \calO$.  The first of these problems is solved by computing a Dirichlet domain; the second is solved using lattice methods.  We discuss each of these in turn.

Let $p \in \calH$ have trivial stabilizer $\Gamma_p=\{g \in \Gamma : gp=p\}=\{1\}$.  The \emph{Dirichlet domain} centered at $p$ is the set
\[ D(p)=\{z \in \calH : d(z,p) \leq d(gz,p) \text{ for all $g \in \Gamma$}\} \]
where $d$ denotes the hyperbolic distance.  In other words, we pick in every $\Gamma$-orbit the closest points $z$ to $p$.  The set $D(p)$ is a closed, connected, hyperbolically convex fundamental domain for $\Gamma$ whose boundary consists of finitely many geodesic segments, called \emph{sides}, and comes equipped with a \emph{side pairing}, a partition of the set of sides into pairs $s,s^*$ with $s^*=g(s)$ for some $g \in \Gamma$.  (We must respect a convention when a side $s$ is fixed by an element of order $2$, considering $s$ to be the union of two sides meeting at the fixed point of $g$.)  

The second author has proven \cite{V-fd} that there exists an algorithm which computes a Dirichlet domain $D$ for $\Gamma$, a side pairing for $D$, and a finite presentation for $\Gamma$ with a minimal set of generators together with a solution to the word problem for $\Gamma$.  This algorithm computes $D$ inside the unit disc $\calD$, and we consider now $\Gamma$ acting on $\calD$ by a conformal transformation which maps $p \to 0$.  We find then that $\calD$ can be computed as an \emph{exterior domain}, namely, the intersection of the exteriors of \emph{isometric circles} $I(g)$ for elements $g \in \Gamma$, where $I(g)=\{z \in \C : |cz+d|=1\}$ where $g=\begin{pmatrix} a & b \\ c & d \end{pmatrix} \in SU(1,1)$.  We find generators for the group by enumerating short vectors with respect to a certain positive definite quadratic form depending on $p$.  

The solution to the word problem comes from a reduction algorithm that finds elements which contribute to the fundamental domain and removes redundant elements.  Given a finite subset $G \subset \Gamma \setminus \{1\}$   we say that $\gamma$ is \emph{$G$-reduced} if for all $g \in G$, we have   $d(\gamma 0,0) \leq d(g\gamma 0, 0)$.     We have a straightforward algorithm to obtain a $G$-reduced element, which we denote $\gamma \mapsto \red_G(\gamma)$:   if $d(\gamma 0,0) > d(g\gamma 0,0)$ for some $g \in G$,   set $\gamma := g\gamma$ and repeat.  When the exterior domain of $G$ is a fundamental domain, we have $\red_G(\gamma)=1$ if and only if $\gamma \in \Gamma$.  This reduction is analogous to the generalized division algorithm in a polynomial ring over a field.  

\begin{exm}
Let $F$ be the (totally real) cubic subfield of $\Q(\zeta_{13})$ with discriminant $d_F=169$.     We have $F=\Q(b)$ where $b^3+4b^2+b-1=0$.  ($F$ has strict class number $1$.)  

The quaternion algebra $B=\quat{-1,b}{F}$ has discriminant $\frakD=(1)$ and is ramified at $2$ of the $3$ real places of $F$.  We take $\calO$ to be an Eichler order of level $\frakp=(b+2)$, a prime ideal of norm $5$; explicitly, we have
\[ \calO=\Z_F \oplus (b+2) i \Z_F \oplus \frac{b^2+(b+4)i+j}{2} \Z_F  \oplus \frac{b+(b^2+4)\alpha+\alpha\beta}{2} \Z_F. \]

We compute a fundamental domain for the group $\Gamma=\Gamma_0^{(1)}(\frakp)=\iota_\infty(\calO_1^*)/\{\pm 1\}$.  We take $p=9/10 i \in \calH$.

We first enumerate elements of $\calO$ by their absolute reduced norm.  Of the first $260$ elements, we find $29$ elements of reduced norm $1$.

\begin{center}
\includegraphics[width=2.5in]{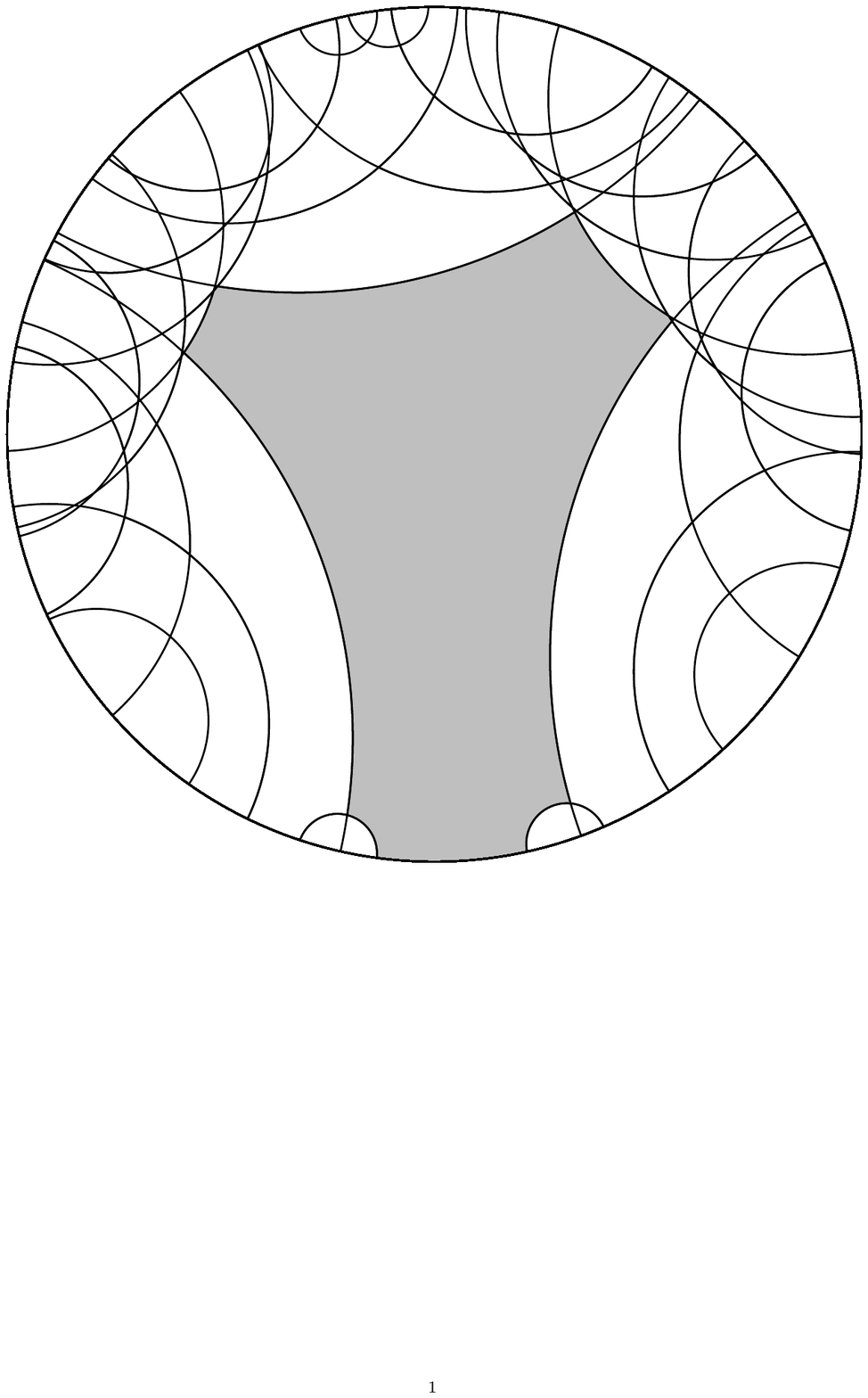}
\end{center}

Let $G$ be the set of elements which contribute to the boundary.  For each $g \not \in G$,   we compute $\red_G(g)$.  Each in fact reduces to $1$, so we are left with $8$ elements.

\vfill\newpage

\begin{center}
\includegraphics[width=2.5in]{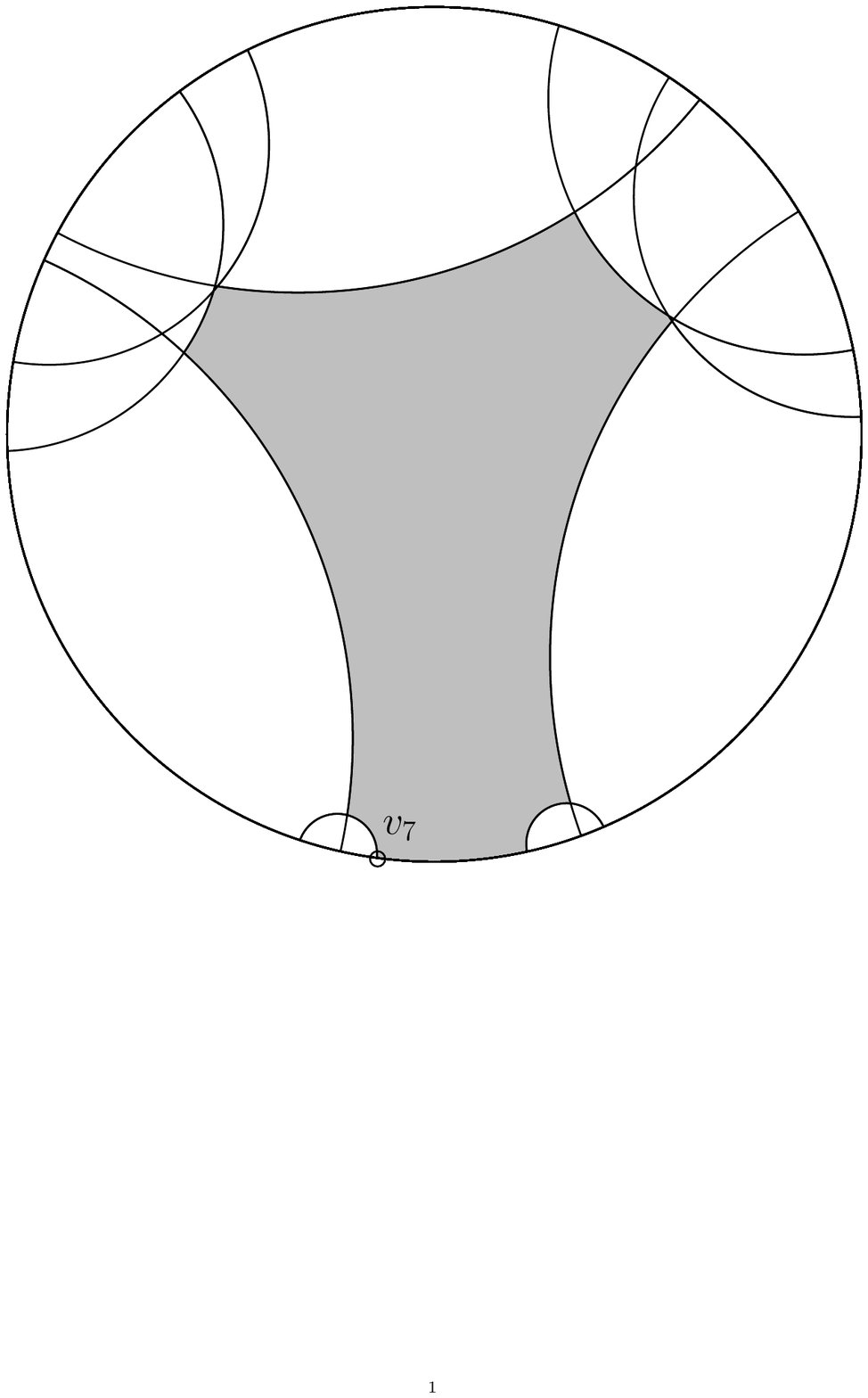}
\end{center}

We next enumerate elements in $\calO$ moving the center in the direction of the infinite vertex $v_7$.  We find an \emph{enveloper}, an element $g$ such that $v_7$ lies in the interior of $I(g)$, and reduce to obtain the following.

\begin{center}
\includegraphics[width=2.5in]{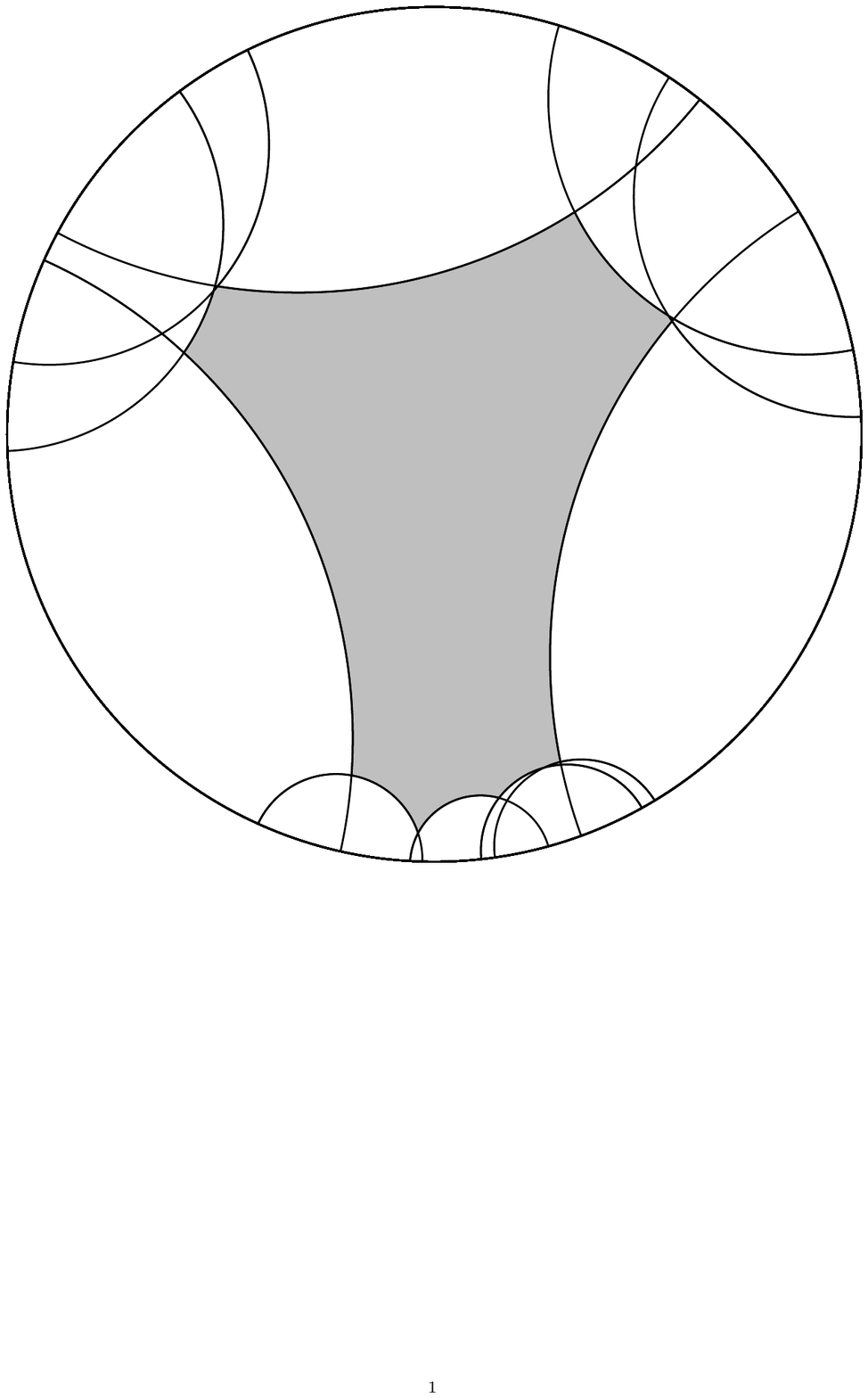}
\vspace{-0.25in}
\end{center}

The domain now has finite area.  We now attempt to pair each vertex to construct a side pairing.  

\begin{center}
\includegraphics[width=2.5in]{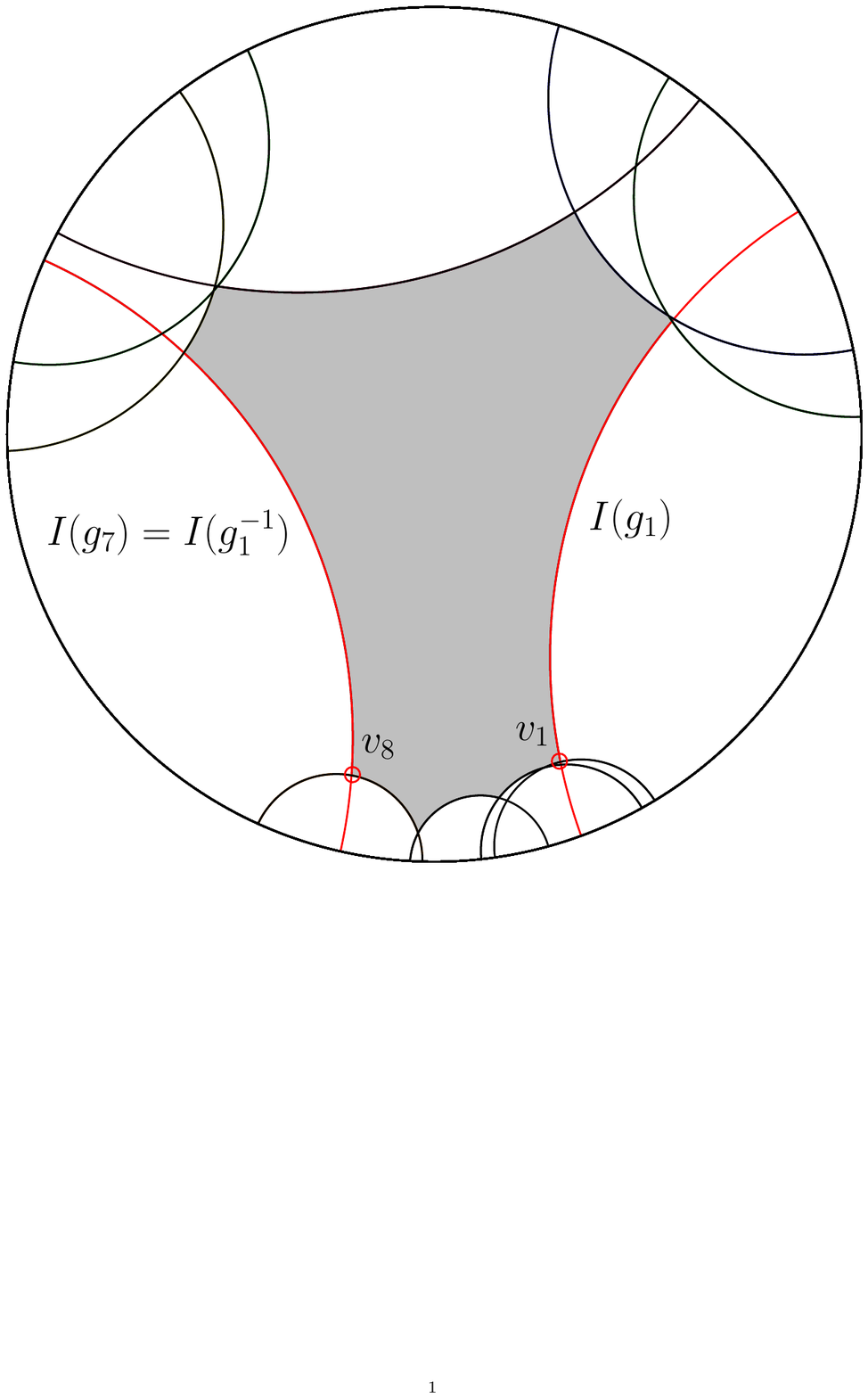}  
\vspace{-0.25in}
\end{center}

For example, the first vertex $v_1$ pairs with $v_8$, pairing the isometric circle $I(g_1)$ of $g_1$ with $I(g_7)$.  We continue, but find that $v_9$ does not pair with another vertex.

\begin{center}
\includegraphics[width=2.5in]{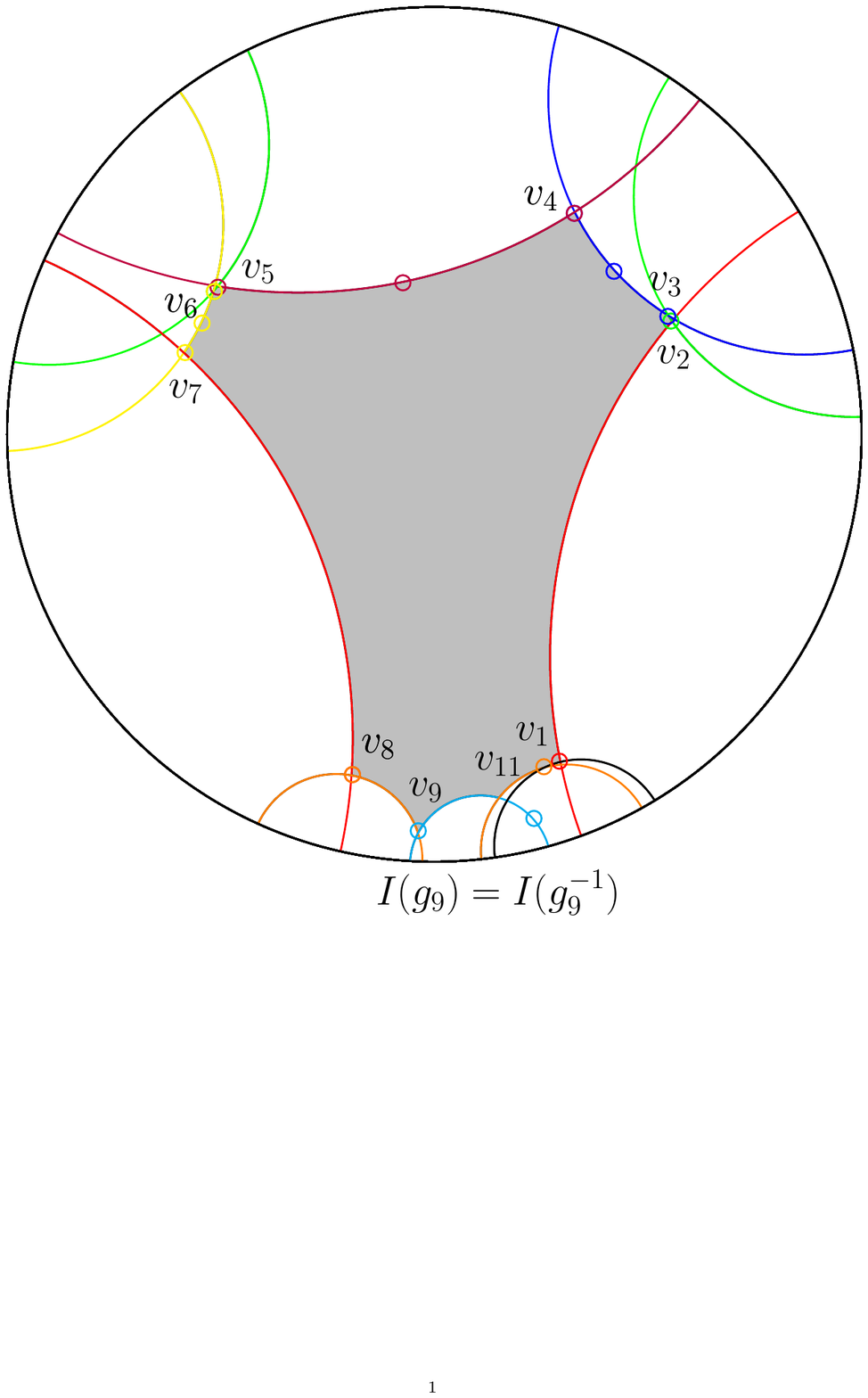}
\end{center}

Indeed, $g_9(v_9)$ does not lie in the exterior domain.  We compute the reduction $\red_G(g_9;v_9)$, analogously defined.  We then obtain a domain with the right area, so we are done!

\begin{center}
\includegraphics[width=2.5in]{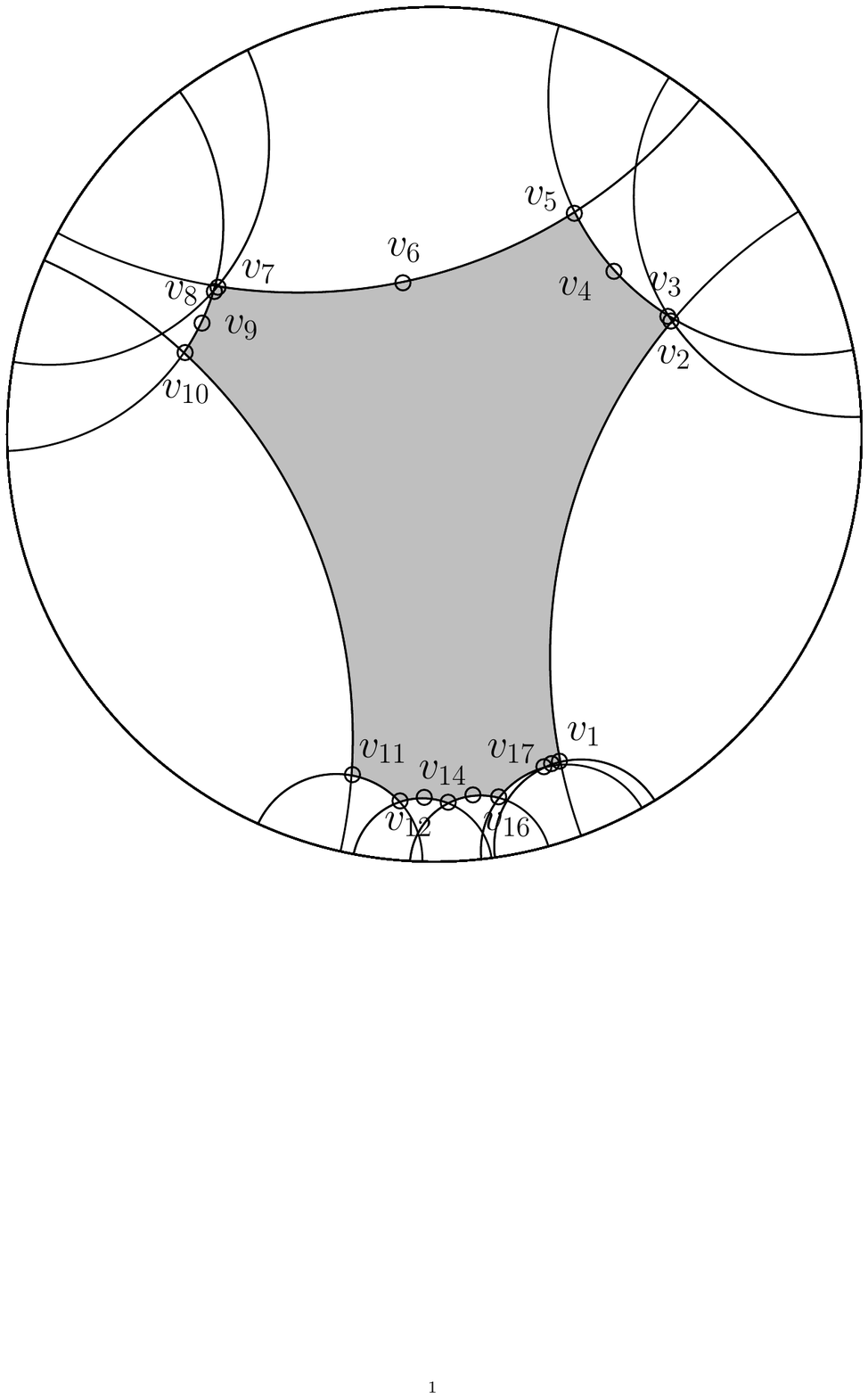}  
\end{center}
\end{exm}

(In order to get an honest side pairing, we agree to the convention that a side $I(g)$ which is fixed by an element $g$, necessarily of order $2$, is in fact the union of two sides which meet at the unique fixed point of $g$; the corresponding vertex then appears along such an side.)

As to the second problem, that of principalizing ideals, we refer to work of Kirschmer and the second author \cite[\S 4]{KirschmerVoight}, which relies again upon enumeration in a lattice.  This method can be improved using the computation of a fundamental domain, which allows one to further effectively reduce lattice elements.

\medskip

As in the previous section, when the level $\frakN$ varies, by changing the coefficient module one can work always with the group $\Gamma(1)$ associated to the maximal order $\calO_0(1)$ rather than recomputing a fundamental domain for $\Gamma=\Gamma_0(\frakN)$ each time.  This is a simple application of Shapiro's lemma: the isomorphism
\[ H^1(\Gamma,\C) \cong H^1(\Gamma(1), \Coind_{\Gamma}^{\Gamma(1)} \C), \]
is an isomorphism of Hecke modules: we give $\Coind_{\Gamma}^{\Gamma(1)} \C = \Hom(\Gamma(1)/\Gamma, \C)$ the natural structure as a $\Gamma(1)$-module, and for a cochain $f \in H^1(\Gamma, \Coind_{\Gamma}^{\Gamma(1)} \C)$, we define 
\begin{equation} 
(T_\frakp f)(\gamma)=\sum_{\pi_a \in \Theta(\frakp)} f(\delta_a)
\end{equation}
as in (\ref{Heckeindefiniteincohom}).  We require still that $\Theta(\frakp) \subset \calO_+$; after enumerating a set of representatives for $\Gamma \backslash \Gamma(1)$, we can simply multiply elements $\pi_a \in \calO_0(1)$ by the representative for its coset.

\medskip

To conclude, we compare the above method to the method of modular symbols introduced when $F=\Q$. The Shimura curves $X=X_0^B(\frakN)$ do not have cusps, and so the method of modular symbols does not generalize directly.    However, the side pairing of a Dirichlet domain for $\Gamma$ gives an explicit characterization of the gluing relations which describe $X$ as a Riemann surface,   hence one obtains a complete description for the homology group $H_1(X,\Z)$.    Paths are now written $\{v,\gamma v\}$ for $v$ a vertex on a side paired by $\gamma \in G$.     The analogue of the Manin trick in the context of Shimura curves is played by the solution to the word problem in $\Gamma$, and so in some sense this can be seen as a partial extension of the Euclidean algorithm to totally real fields.  Our point of view is to work dually with cohomology, but computationally these are equivalent \cite[\S 6]{GreenbergVoight}.

\section{Examples}\label{sec:examples-1}

In this section, we provide some detailed examples to illustrate applications of the algorithms introduced in the previous sections.

Our first motivation comes from the association between Hilbert modular forms and abelian varieties.  The Eichler-Shimura construction (see Knapp~\cite[Chap. XII]{knapp} or Shimura \cite[Chapter 7]{shimura3}) attaches to a classical newform $f(z)=\sum_n a_n q^n \in S_2(N)^{\textup{new}}$ an abelian variety $A_f$ with several properties.  First, $A_f$ has dimension equal to $[E_f:\Q]$ where $E_f=\Q(\{a_n\})$ is the field of Fourier coefficients of $f$ and $\End_{\Q}(A_f) \otimes \Q \cong E_f$.  Second, $A_f$ has good reduction for all primes $p \nmid N$.  Last, we have an equality of $L$-functions
\[ L(A_f,s) = \prod_{\sigma \in \Hom(E_f,\C)} L(f^{\sigma},s) \]
where $f^\sigma$ is obtained by letting $\sigma$ act on the Fourier coefficients of $f$, so that
\[ L(f^{\sigma},s) = \sum_{n=1}^{\infty} \frac{a_n^{\sigma}}{n^s}. \]
The abelian variety $A_f$ arises as the quotient of $J_0(N)$ by the ideal of $\End J_0(N)$ generated by $T_n - a_n$. 

The simplest example of this construction is the case of a newform $f$ with rational Fourier coefficients $\Q(\{a_n\})=\Q$.  Then $A_f$ is an elliptic curve of conductor $N$ obtained analytically via the map
\begin{align*}
X_0(N) &\to A_f \\
\tau &\mapsto z_\tau = 2\pi i\int_{i\infty}^{\tau} f(z)\,dz = \sum_{n=1}^{\infty} \frac{a_n}{n}e^{2\pi i\tau}.
\end{align*}
The equality of $L$-functions is equivalent to the statement $\# A_f(\F_p) = p+1-a_p$ for all primes $p$, and this follows in the work of Eichler and Shimura by a comparison of correspondences in characteristic $p$ with the Frobenius morphism.

The Eichler-Shimura construction extends to the setting of totally real fields in the form of the following conjecture (see e.g.\ Darmon \cite[Section 7.4]{Darmon}).

\begin{conj}[Eichler-Shimura]\label{conj1} 
Let $f \in S_2(\frakN)^{\textup{new}}$ be a Hilbert newform of parallel weight $2$ and level $\frakN$ over $F$.  Let $E_f=\Q(\{a_{\frakn}\})$ be the field of Fourier coefficients of $f$.  Then there exists an abelian variety $A_f$ of dimension $g=[E_f:\Q]$ defined over $F$ of conductor $\frakN^g$ such that $\End_{F}(A_f)\otimes\Q \cong E_f$ and 
\[ L(A_f,s)=\prod_{\sigma\in\Hom(E_f,\C)}L(f^\sigma,s). \]
\end{conj}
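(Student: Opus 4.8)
The statement is a conjecture, and I would not expect to prove it in full; what follows is the strategy one can actually execute, together with a description of exactly where it breaks down. The plan is to start from the compatible system of $\frakl$-adic Galois representations $\rho_{f,\frakl}\colon\Gal(\overline{F}/F)\to\GL_2(\overline{\Z}_{F,\frakl})$ attached to $f$ (and the representations $\rho_{f^\sigma,\frakl}$ attached to its $\Gal(\Qbar/\Q)$-conjugates $f^\sigma$), form the $2g$-dimensional representation $\bigoplus_{\sigma\in\Hom(E_f,\C)}\rho_{f^\sigma,\frakl}$, and recognize it as the rational $\frakl$-adic Tate module of an abelian variety. This representation has Hodge--Tate weights $\{0,1\}$ at each place above $\ell$, each with multiplicity $g$, and carries an $E_f$-action permuting the summands, exactly the profile of $V_\frakl A$ for a $g$-dimensional $A/F$ with $E_f\hookrightarrow\End_F(A)\otimes\Q$; the whole content of the conjecture is to realize it \emph{geometrically}. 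Once $A_f$ is produced, the compatibility $\Tr\rho_{f,\frakl}(\Frob_\frakp)=a_\frakp$ and $\det\rho_{f,\frakl}(\Frob_\frakp)=N\frakp$ for $\frakp\nmid\frakl\frakN$ from \S2 gives the $L$-function identity $L(A_f,s)=\prod_\sigma L(f^\sigma,s)$ and good reduction away from $\frakN$; that $\End_F(A_f)\otimes\Q$ is \emph{exactly} $E_f$ and that the conductor is \emph{exactly} $\frakN^g$ are then separate local and irreducibility arguments.

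In the cases where a Shimura curve is available this can be carried out unconditionally, and this is where I would concentrate a write-up. Suppose there is a quaternion algebra $B$ over $F$ ramified at exactly $n-1$ of the real places and at a squarefree set of finite primes $\frakD$ with $\frakN=\frakD\frakM$ and $\gcd(\frakD,\frakM)=1$; as noted in Remark~\ref{rmkgenspace} this is possible whenever $[F:\Q]=n$ is odd (take $\frakD=(1)$) and, when $n$ is even, whenever $\frakN$ has a prime factor of multiplicity one. Theorem~\ref{shimizuclno1} transfers $f$ to $f^B\in S_2^B(\frakM)$, which by the Eichler--Shimura isomorphism of \S5 sits in $H^1(X,\C)$ for the Shimura curve $X=X_0^B(\frakM)$. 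I would then set
\[
A_f=\Jac(X)\,\big/\,I_f\Jac(X),\qquad I_f=\ker\bigl(\lambda_f\colon\widetilde{\T}\to E_f\bigr),
\]
where $\lambda_f$ is the system of Hecke eigenvalues of $f$ and $\widetilde{\T}$ acts on $\Jac(X)$ through its Hecke correspondences. Then $\dim A_f$ equals the dimension of the $\lambda_f$-eigenspace of $H^0(X,\Omega^1_X)$, which is $[E_f:\Q]$ since the distinct conjugates $f^{B,\sigma}$ are linearly independent; the residual action embeds $E_f$ in $\End_F(A_f)\otimes\Q$; the Eichler--Shimura congruence relation on $X$ — established for these Shimura curves by Carayol — identifies $\Frob_\frakp$ with $T_\frakp$ in $\End(\Jac X)\otimes\Q$ away from $\frakD\frakN$, giving the $L$-function identity; and the conductor exponent at each $\frakp\mid\frakN$ is read off from the local Langlands correspondence via Carayol's analysis of the bad fibres, yielding conductor $\frakN^g$. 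Finally $\End_F(A_f)\otimes\Q=E_f$ exactly, because $\rho_{f,\frakl}$ is absolutely irreducible and, when $f$ has no CM, $A_f$ is simple up to isogeny over $\overline{F}$ with no extra endomorphisms; in the CM case one instead identifies $A_f$ explicitly as a CM abelian variety.

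The main obstacle — and the reason this remains open — is the complementary case in which no such $B$ exists, for instance $F$ of even degree with $\frakN=(1)$, or more generally $\frakN$ in which every prime occurs to multiplicity at least $2$. There $f$ contributes only to the middle cohomology $H^n(X_0(\frakN),\C)$ of the $n$-dimensional Hilbert modular variety, and there is no geometric source of a curve or abelian variety. One still has the compatible system $\{\rho_{f,\frakl}\}$ — constructed in this range by congruence methods (Taylor, Blasius--Rogawski, and others) rather than from a Shimura curve — with the correct Hodge--Tate weights and rationality; the missing input is a case of the Fontaine--Mazur conjecture asserting that such a system is the Tate module of an abelian variety with multiplication by $E_f$. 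For $f$ that is a base change of a newform $f_0$ from a proper subfield $F_0$ one can sidestep this by applying Eichler--Shimura over $F_0$ and base-changing the resulting abelian variety, then redoing the conductor and endomorphism computations; but for genuinely new Hilbert eigenforms over even-degree fields this step appears to require ideas well beyond the present methods, so I would state the conjecture unconditionally only in the Shimura-curve cases above and conditionally on Fontaine--Mazur otherwise.
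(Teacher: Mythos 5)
This statement is a conjecture, and the paper offers no proof of it; your proposal correctly recognizes this and your survey of the known cases matches the paper's own discussion almost exactly: the conjecture is known when $f$ transfers to a Shimura curve via Jacquet--Langlands (which happens precisely when $n=[F:\Q]$ is odd or $\frakN$ is exactly divisible by some prime), and the genuinely open case is even degree with squarefull level. Your construction of $A_f$ as the quotient of $\Jac(X_0^B(\frakM))$ by the Hecke ideal $I_f$, with the $L$-function identity coming from the Eichler--Shimura congruence relation and the conductor from Carayol's local analysis, is the standard argument the paper alludes to (citing Zhang). The one substantive divergence is in how you would handle the remaining case conditionally: the paper points to Blasius's result that the conjecture follows from the Hodge conjecture (a motivic/geometric route through the middle cohomology of the Hilbert modular variety), whereas you invoke Fontaine--Mazur to pass from the compatible system $\{\rho_{f,\frakl}\}$ to an abelian variety; these are different conditional inputs, and Blasius's is the one actually documented in the literature for this problem. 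Two small imprecisions worth flagging: your assertion that $A_f$ is ``simple up to isogeny over $\overline{F}$'' in the non-CM case is not needed and is in general false (inner twists can split $A_f$ over extensions of $F$) --- the correct statement is only that $\End_F(A_f)\otimes\Q=E_f$ over $F$ itself; and the base-change reduction you sketch requires care with conductors and with twists, as the examples in Section 6 of the paper illustrate.
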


In particular, if $f$ has rational Fourier coefficients, Conjecture \ref{conj1} predicts that one can associate to $f$ an elliptic curve $A_f$ defined over $F$ with conductor $\frakN$ such that $L(A_f,s)=L(f,s)$.  

Conjecture \ref{conj1} is known to be true in many cases.  It is known if $f$ appears in the cohomology of a Shimura curve (as in Sections 3 and 5 above; see also Zhang~\cite{zhang}): this holds whenever $n=[F:\Q]$ is odd or if $\frakN$ is exactly divisible by a prime $\frakp$.  If $f$ appears in the cohomology of a Shimura curve $X$ and $f$ has rational Fourier coefficients, then when the conjecture holds there is a morphism $X \to A_f$ and we say that $A_f$ is \emph{uniformized} by $X$.  

However, Conjecture \ref{conj1} is not known in complete generality: by the Jacquet-Langlands correspondence, one has left exactly those forms $f$ over a field $F$ of even degree $n$ and \emph{squarefull} level (i.e.\ if $\frakp \mid \frakN$ then $\frakp^2 \mid \frakN$), for example $\frakN=(1)$.  One expects still that the conjecture is true in this case: Blasius~\cite{blasius}, for example, shows that the conjecture is true under the hypothesis of the Hodge conjecture.

The converse to the Eichler-Shimura construction (Conjecture \ref{conj1}) over $\Q$ is known as the Shimura-Taniyama conjecture.  An abelian variety $A$ over a number field $F$ is \emph{of $\GL_2$-type} if $\End_F A \otimes \Q$ is a number field of degree $\dim A$.  The Shimura-Taniyama conjecture states that given an abelian variety $A$ of $\GL_2$-type over $\Q$, there exists an integer $N\ge 1$ and a surjective morphism $J_0(N)\to A$.  This conjecture is a theorem, a consequence of the proof of Serre's conjecture by Khare and Wintenberger \cite{khare1}.  In the setting of totally real fields, the analogous conjecture is as follows.  

\begin{conj}[Shimura-Taniyama]\label{conj2} 
Let $A$ be an abelian variety of $\GL_2$-type over a totally real field $F$.  Then there exists a Hilbert newform $f$ of parallel weight $2$ such that 
$E_f\cong \End_F(A)\otimes\Q$ and 
\[ L(A,s)=\prod_{\sigma\in\Hom(E_f,\C)} L(f^\sigma,s).\]
\end{conj}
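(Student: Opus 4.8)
The Shimura--Taniyama conjecture is known over $\Q$ as a consequence of Serre's conjecture (Khare--Wintenberger), and over a totally real field $F$ one would attack it along the same lines: attach to $A$ a compatible system of two-dimensional $\lambda$-adic Galois representations and prove that system modular. Concretely, $E=\End_F(A)\otimes\Q$ is a number field with $[E:\Q]=\dim A$, and $E$ acts on the Tate modules of $A$; so for each rational prime $\ell$ the $\Q_\ell$-adic realization decomposes under $E\otimes_\Q\Q_\ell=\prod_{\lambda\mid\ell}E_\lambda$ as $\bigoplus_{\lambda\mid\ell}V_\lambda$, where each $V_\lambda$ is a two-dimensional $E_\lambda$-vector space carrying a continuous action $\rho_{A,\lambda}\colon\Gal(\overline{F}/F)\to\GL_2(E_\lambda)$, unramified outside $\ell$ and the conductor $\frakN$ of $A$, with determinant the $\ell$-adic cyclotomic character and $\Tr\rho_{A,\lambda}(\Frob_\frakp)=a_\frakp(A)\in E$ for $\frakp\nmid\ell\frakN$. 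These form a strictly compatible system, and the conjecture holds for $A$ exactly when one (equivalently every) $\rho_{A,\lambda}$ is modular, i.e.\ $\rho_{A,\lambda}\cong\rho_{f,\lambda}$ for a Hilbert newform $f$ of parallel weight $2$ and level $\frakN$ over $F$ as in Section~2; then $E_f\cong E$ and the $L$-function identity follows by matching Euler factors.

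The crux is residual modularity. Fix $\lambda$ over a small prime $\ell$ and consider $\bar\rho_{A,\lambda}\colon\Gal(\overline{F}/F)\to\GL_2(\mathbf{F}_\lambda)$. Following Wiles' original strategy I would try to arrange $\ell\in\{3,5\}$ so that $\bar\rho_{A,\lambda}$ has solvable image (projectively $A_4$, $S_4$, or dihedral/reducible), where Langlands--Tunnell together with solvable base change gives residual modularity directly; failing that one must invoke known cases of the Serre-type modularity conjecture over totally real fields, or use Taylor's potential-modularity theorems to obtain modularity after a (possibly nonsolvable) totally real base change, which gives partial information but not descent to $F$ by itself. The small-image cases are handled separately: a reducible $\bar\rho_{A,\lambda}$ forces $A$ into a CM/Hecke-character or base-change situation where automorphy is classical, and a dihedral image gives a representation induced from a quadratic extension, modular by automorphic induction.

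Granting residual modularity of a well-chosen $\bar\rho_{A,\lambda}$, I would then invoke a modularity lifting theorem for $\GL_2$ over the totally real field $F$, of Taylor--Wiles--Kisin type (Fujiwara, Skinner--Wiles, Kisin, Gee, \emph{et al.}), with local deformation conditions at the primes above $\ell$ matching the good or semistable reduction type of $A$, to deduce that $\rho_{A,\lambda}$ itself is attached to a Hilbert newform $f$ of parallel weight $2$ and level $\frakN$. One then checks $E_f\cong E$ by comparing the Hecke action on $f$ with the $E$-action on $V_\lambda$ via $a_\frakp(f)=\Tr\rho_{A,\lambda}(\Frob_\frakp)=a_\frakp(A)$ for $\frakp\nmid\frakN$, and concludes $L(A,s)=\prod_{\sigma\in\Hom(E_f,\C)}L(f^\sigma,s)$ from equality of Euler factors at good primes (Chebotarev and strong multiplicity one) together with local--global compatibility of the Langlands correspondence at the primes dividing $\frakN$.

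The main obstacle, and the reason this is still a conjecture rather than a theorem, is the residual modularity step: Serre's modularity conjecture over a general totally real field is open, and the Khare--Wintenberger proof over $\Q$ exploits features special to $\Q$ (notably the nonexistence of everywhere-unramified extensions, which anchors the induction on level and weight). Even granting residual modularity, the lifting theorems demand control of potentially-semistable deformation rings at primes above $\ell$ when $A$ has bad reduction there, and ``adequacy''/big-image hypotheses on $\bar\rho_{A,\lambda}$ that are delicate to verify uniformly; and when $[F:\Q]$ is even there is no auxiliary Shimura curve (cf.\ Sections~3 and~5) supplying geometric modularity for free, so the full strength of the automorphic machinery is required.
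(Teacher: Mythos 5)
This statement is a conjecture in the paper, stated without proof: the text only remarks that wide classes of abelian varieties of $\GL_2$-type over totally real fields are known to be modular by the lifting theorems of Skinner--Wiles, Kisin, Snowden, and Geraghty, and that ``a complete proof of Conjecture \ref{conj2} remains elusive.'' Your proposal correctly recognizes that the statement is open and your sketch of the strategy --- attach the compatible system $\{\rho_{A,\lambda}\}$, establish residual modularity, apply a Taylor--Wiles--Kisin lifting theorem, and deduce the $L$-function identity by comparing Euler factors --- together with your identification of residual modularity over general $F$ as the genuine obstruction, is accurate and consistent with the paper's own discussion, so there is no proof to compare it against and nothing to fault.
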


If both of these conjectures are true, then by Faltings' isogeny theorem, the abelian variety $A$ in Conjecture \ref{conj2} would be isogenous to the abelian variety $A_f$ constructed in Conjecture \ref{conj1}.  An abelian variety of $\GL_2$-type over a totally real field $F$ which satisfies the conclusion of Shimura-Taniyama conjecture (Conjecture \ref{conj2}) is called \emph{modular}. 

There has been tremendous progress in adapting the techniques initiated by Wiles~\cite{wiles}---which led to the proof of the Shimura-Taniyama conjecture in the case $F=\Q$---to totally real fields.  For example, there are modularity results which imply that wide classes of abelian varieties of $\GL_2$-type are modular: see for example work of  Skinner and Wiles~\cite{skinner-wiles1, skinner-wiles2}, Kisin~\cite{kisin1, kisin2}, Snowden~\cite{snowden} and Geraghty~\cite{geraghty}, and the results below.   However, a complete proof of Conjceture \ref{conj2} remains elusive.

\medskip

With these conjectures in mind, we turn to some computational examples.  We begin with $F=\Q(\sqrt{5})$ and let $w=(1+\sqrt{5})/2$.  For some further discussion on this case, see work of the first author \cite{Dembelesqrt5}.

\begin{exm} \label{le30}
We consider the spaces $S_2(\frakN)$ with $N(\frakN) \leq 30$.  

We compute using the definite method of Section 4 using the quaternion algebra over $F$ which is ramified at no finite place.  We find that $\dim S_2(\frakN)=0$ for all ideals $\frakN$ of $\Z_F$ with $N(\frakN) \leq 30$ and $\dim S_2(\frakN)=1$ for $\frakN$ with norm $31$.  

Now let $\frakN$ be either of the primes above 31.  By the Jacquet-Langlands correspondence (Theorem \ref{shimizuclno1}) and the accompanying discussion, the space $S_2(\frakN)$ can also be computed using the indefinite quaternion algebra $B$ ramified at $\frakN$ and one real place.  We find agreeably that the Shimura curve $X_0^B(1)$ has genus one and thus its Jacobian is an elliptic curve.  By a search, we find the elliptic curve 
\[ A:\,y^2+xy+w y=x^3-(1+w)x^2 \] 
of conductor $\frakN=(5+2w)$ of norm $31$.  

We now show that $A$ is modular.  In brief, we verify that the Galois representation $\overline{\rho}_{A,3}:\Gal(\overline{F}/F) \to \GL_2(\F_3) \subseteq \GL_2(\F_9)$ is irreducible, solvable, and ordinary since $\#A(F_9)=8$ (so $3 \nmid a_3(A)=9+1-8=2$), so $A$ is modular by Skinner and Wiles \cite{skinner-wiles1}.  Since $A$ has conductor $\frakp$ and the space $S_2(\frakp)$ has dimension $1$, it follows by Falting's isogeny theorem that $A$ is isogeneous to the Jacobian of the Shimura curve $X_0^B(1)$.  

In particular, the elliptic curves over $F=\Q(\sqrt{5})$ with smallest conductor have prime conductor  dividing $31$ and are isogenous to $A$ or its Galois conjugate.
\end{exm}

\begin{exm}\label{thm:ex-root-5} 
Let $\frakN \subseteq \Z_F$ be such that $N(\frakN) \le 100$.

If $\frakN \nmid 61$, then $\dim S_2(\frakN)^{\textup{new}} \leq 2$: each newform $f$ of conductor $\frakN$ has rational Fourier coefficients and is either the base change of a classical modular form over $\Q$ or associated to an elliptic curve which is uniformized by a Shimura curve.  

We compute each space in turn using the definite method.  Those forms $f$ which have the property that $a_{\frakp}(f)=a_{\overline{\frakp}}(f)$, where $\overline{\phantom{x}}$ denotes the Galois involution, are candidates to arise from base change over $\Q$.  For each of these forms $f$, we find a candidate classical modular form $g$ using the tables of Cremona \cite{cremonatables}: such a form will have conductor supported at $5$ and the primes dividing $N(\frakN)$.  Each curve comes with a Weierstrass equation, and we find a quadratic twist of the base change of this curve to $F$ which has conductor $\frakN$.  Since every elliptic curve over $\Q$ is modular, the base change and its quadratic twist are also modular, and since we exhaust the space this way we are done.

For example, when $\frakN=(8)$ we have $\dim S_2(8)^{\textup{new}}=1$ and find the form $f$ with Hecke eigenvalues $a_{(w+2)}=-2$, $a_{(3)}=2$, $a_{(w+3)}=a_{(w+7)}=-4$, $a_{(w-5)}=a_{(w+4)}=4$, and so on.  In the tables of Cremona, we locate the form 
\[ g(q)=q-2q^3-2q^7+q^9-4q^{11}-4q^{13}-4q^{19}+\dots \in S_2(200) \]
associated to an elliptic curve $A_g$ with label \textsf{200b} and equation $y^2=x^3+x^2-3x-2$.  We find that the quadratic twist of (the base change of) $A_g$ by $-(w+2)=-\sqrt{5}$ given by the equation $y^2 = x^3 + (w - 1)x^2 - wx$ has conductor $(8)$, so we conclude that $f$ is the base change of $g$.  Note that this example is not covered by the known cases of the Eichler-Shimura construction, since the level $(8)=(2)^3$ is squarefull.
\end{exm}

\begin{exm}
Now consider the prime level $\frakN=(3w+7) \mid 61$, then $S_2(\frakN)^{\textup{new}}$ is a $2$-dimensional irreducible Hecke module arising from the Jacobian of the Shimura curve $X_0^B(1)$ where $B$ is the quaternion algebra ramified at one of the infinite places and $\frakN$.  This follows from Examples \ref{defexmqsrt5} and \ref{indefexmqsrt5}.

One is naturally led to ask, can one identify the genus 2 Shimura curve $X=X_0^B(1)$ with $B$ of discriminant $\frakN$?

Consider the hyperelliptic curve $C:y^2+ Q(x)y=P(x)$ over $F$ with 
\begin{align*}
P(x) &= -w x^4 + (w - 1)x^3 + (5w + 4)x^2 + (6w + 4)x + 2w + 1\\
Q(x) &= x^3 + (w - 1)x^2 + w x + 1.
\end{align*}
This curve was obtained via specialization of the Brumer-Hashimoto family \cite{brumer2,Hashimoto} of curves (see also Wilson \cite{wilson}) whose Jacobian has real multiplication by $\Q(\sqrt{5})$, so in particular so does the Jacobian $J=\Jac(C)$ of $C$.  The discriminant of $C$ is $\disc(C)=\frakN^2$.  One can show that $J$ is modular using the theorem of Skinner and Wiles \cite[Theorem A]{skinner-wiles2} and the fact that $J$ has a torsion point of order 31.  Since $C$ is hyperelliptic of genus $2$, we compute that the conductor of (the abelian surface) $J$ is $\frakN^2$ and that its level is $\frakN$.  Since $J$ is modular of level $\frakN$, it corresponds to the unique Hecke constituent of that level. It follows then by Faltings' isogeny theorem that $J$ and the Jacobian of the Shimura curve $X$ are isogenous.
\end{exm} 

These examples exhaust all modular abelian varieties of $\GL_2$-type over $\Q(\sqrt{5})$ with level $\frakN$ of norm $N(\frakN)\le 100$ (up to isogeny).

We conclude with some (further) cases where Conjectures \ref{conj1} and \ref{conj2} are not known.

\begin{exm}
The first case of squarefull level with a form that is not a base change is level $\frakN=(w+36)=(w+3)^2$ so $N(\frakN)=121$.  We have $\dim S_2(w+36)=1$ and the corresponding newform $f$ has the following eigenvalues (where $t^2=3$):
\[ 
\begin{array}{c||cccccccccc}
\frakp & (2) & (w+2) & (3) & (w+3) & (w-4) & (w-5) & (w+4) & (w+5) & (w-6)\\ 
N\frakp& 4& 5 &9& 11 & 11 & 19 & 19 & 29 & 29\\\hline 
\rule{0pt}{2.5ex} 
a_\frakp(f)& t & -t & -1 & 0 & -2t & 4t & -2 & -5t & -3\\ 
\end{array} \]
In this case, the Eichler-Shimura conjecture predicts the existence of an abelian surface defined over $F=\Q(\sqrt{5})$ with good reduction away from $\frakp=(w+3)$ and with real multiplication by $\Q(\sqrt{3})$.  The form $f$ is not a base change from $\Q$ since $a_\frakp(f) \neq a_{\overline{\frakp}}(f)$ in general.  Can one find an explicit genus $2$ curve over $F$, analogous to the previous example, with the $L$-function of its Jacobian given by the above Frobenius data?
\end{exm}

\begin{exm}
We next consider a situation when we are successful in establishing the correspondence in a case which is not covered by known results.  Consider $\frakN=(17w-8)=(w+4)^2$ so $N(\frakN)=361$.  There exist forms $f,g \in S_2(\frakN)$ with Hecke eigenvalues as follows:
\[ 
\begin{array}{c||ccccccccc}
\frakp & (2) & (w+2) & (3) & (w+3) & (w-4) & (w-5) & (w+4) & (w+5) & (w-6)\\\hline
N\frakp& 4& 5 &9& 11 & 11 & 19 & 19 & 29 & 29\\ 
\rule{0pt}{2.5ex} 
a_\frakp(f)& 2 & -3 & 1 & 3 & 3 & -1 & 0 & 3 & -6\\
a_\frakp(g)& -2 & -3 & -1 & -3 & -3 & -1 & 0 & 3 & 6\\ 
\end{array} \]
The form $g$ is a quadratic twist of $f$ by $w(w+4)$, and the forms $f,g$ are not base changes from $\Q$.  

We use the method of Cremona and Lingham \cite{cremling}---without an attempt to be exhaustive---to find an elliptic curve $E$ over $F$ with good reduction away from $\frakp=(w+4)$.  We find the curve
\[ E:y^2 + (w + 1)y = x^3 + wx^2 - x - w \]
(which could also be found by a naive search) with conductor $\frakN$.  We verify that $\#E(\F_\frakp)=N\frakp+1-a_\frakp(f)$ for all primes $\frakp$ up to the limit of the computation.  We prove that $E$ is modular using the fact that $E$ has a $3$-isogeny, with kernel defined over $F'=F(\sqrt{w(w+4)})$ generated by the points with $x=w-1$, so we can apply the theorem of Skinner and Wiles.  Therefore indeed $E$ corresponds to $f$.
\end{exm}

\medskip

We now turn to the existence of elliptic curves with everywhere good reduction over totally real fields.  The Shimura-Taniyama conjecture predicts that such curves arise from cusp forms of level $(1)$.  

\begin{exm}
Let $F=\Q(\lambda)=\Q(\zeta_{13})^+$ be the totally real subfield of the cyclotomic field $\Q(\zeta_{13})$ with $\lambda=\zeta_{13}+\zeta_{13}^{-1}$.

The space $S_2(1)$ over $F$ has dimension 1, represented by the cusp form $f$.  Conjecture \ref{conj1} predicts the existence of an elliptic curve $A_f$ over $F$ with everywhere good reduction. We note that $A_f$ must be a $\Q$-curve since the twist $f^{\sigma}$ of $f$ by an element $\sigma \in \Gal(F/\Q)$ is again a form of level $1$ and so (since $\dim S_2(1)=1$) $f^{\sigma}=f$ and consequently $A_f$ must be isogenous its Galois conjugates. 

Indeed, let 
\[ A:y^2+a_1xy+a_3y=x^3+a_2x^2+a_4x + a_6 \] 
be the elliptic curve with the following coefficients:
\begin{align*}
a_1&= \lambda^2 + \lambda + 1,\\
a_2&= -\lambda^3 + \lambda^2 + \lambda,\\
a_3&= \lambda^5 + \lambda^3 + \lambda,\\
a_4&= \lambda^5 - \lambda^4 - 11\lambda^3 - \lambda^2 + 3\lambda,\\
a_6&= 13\lambda^5 - 31\lambda^4 - 48\lambda^3 + 52\lambda^2 + 33\lambda - 10.
\end{align*}
We verify that $A$ has everywhere good reduction and that $\#A(F)_{\textup{tors}}=19$ with $A(F)_{\textup{tors}}$ generated by the point
\[ (\lambda^5 - \lambda^4 - 4\lambda^3 + 3\lambda^2 + 2\lambda - 1 ,
    -\lambda^5 + \lambda^4 + \lambda^2). \]
Therefore, by Skinner and Wiles~\cite[Theorem 5]{skinner-wiles1} applied to the $3$-adic representation attached to $A$ or~\cite[Theorem A]{skinner-wiles2} to the $19$-adic one, we conclude that $A$ is modular, and thus $A$ is our abelian variety $A_f$.

The elliptic curve $A$ was obtained from a curve over $\Q(\sqrt{13}) \subseteq F$ as follows.  There is a $2$-dimensional Hecke constituent $g$ in the space of classical modular forms $S_2(169)^{\rm new}$ over $\Q$ given by
\[ g(q)=q+\sqrt{3}q^2+2q^3+q^4-\sqrt{3}q^5+\ldots; \] 
let $A_g$ be the associated abelian surface, defined over $\Q$.  Roberts and Washington~\cite{rob-wash} showed that $A_g$ twisted by $\sqrt{13}$ is isomorphic over $F$ to $E \times E^\sigma$, where
\[ E: y^2=x^3+\frac{3483\sqrt{13}-12285}{2}+(74844\sqrt{13}-270270), \] 
a $\Q$-curve of conductor $(13)$ and $\sigma$ is the nontrivial automorphism of $\Q(\sqrt{13})$.  

The abelian variety $A_g$ is related to the elliptic curve $A$ as follows.  Roberts and Washington show that $J_1(13)$ and $A_g$ are isogenous over $\overline{\Q}$.  The abelian surface $J_1(13)$ is isogenous over $F$ to $A \times A$, since $J_1(13)$ obtains good reduction over $F$ (see Mazur and Wiles \cite[Chapter 3.2, Proposition 2]{mazur-wiles} and Schoof \cite{schoof}).  Indeed, Serre showed that the abelian surface $J_1(13)$ has a rational point of order 19 and Mazur and Tate showed that $J_1(13)$ twisted by $\Q(\sqrt{13})$ is a product of two elliptic curves.
We thereby determine an equation for $A$ by showing that it is a quadratic twist of $E$ by some unit in $F$.  
We would like to thank Elkies and Watkins for pointing us to the rich history of this curve. 
\end{exm}

The next example gives the list of all modular elliptic curves (up to isogeny) with everywhere good reduction over a real quadratic field $F$ of discriminant $\le 1000$ and having strict class number one.

\begin{exm} 
Let $0 \leq D\leq 1000$ be a fundamental discriminant such that $F=\Q(\sqrt{D})$ has strict class number one.  We have $\Z_F=\Z[w]$ with $w=\sqrt{D},(1+\sqrt{D})/2$, according as $D \equiv 0,1 \pmod{4}$.  Let $A$ be a modular elliptic curve over $F$ with everywhere good reduction.  Then $A$ corresponds to a cusp form $f$ of level $1$ with rational Fourier coefficients.  Computing the spaces $S_2(1)$, we find that either $A$ is a $\Q$-curve (so $A$ is isogenous to its Galois conjugate), or $D\in\{509, 853, 929, 997\}$ and the isogeny class of $A$ is represented by the curve with coefficients as below, up to Galois conjugacy:
\[ 
\begin{array}{c||ccccc}
D&a_1&a_2&a_3&a_4&a_6\\\hline
\rule{0pt}{2.5ex} 
509&-1&2+2w&-w&162+3w&71+34w\\
853&w&-1-w&0&-5921+429w&145066-9437w\\
929&w& 1 - w& 1 + 20w& -1738 - 82w& -11808 - 201w\\
997&0& w& 1& - 289-24w&- 2334 -144w\\
\end{array} \]
The $\Q$-curves over $F$ are listed in Cremona~\cite{cremona}: for example, the curve $y^2+xy+uy=x^3$ with $u=(5+\sqrt{29})/2$ is the smallest such and was discovered by Tate~\cite{serre4}.

The curve for discriminant $D=509$ was discovered by Pinch \cite[p.\ 415]{schoof}: it was the first example of an elliptic curve having everywhere good reduction which is not a $\Q$-curve, and it was proven to be modular by Socrates and Whitehouse \cite{SocratesWhitehouse} using the method of Faltings and Serre.   The curve for $D=853$ was discovered by Cremona and Watkins independently; the one for $D=929$ by Elkies; and the one for $D=997$ by Cremona. Their modularity can be established by applying the theorem of Skinner and Wiles~\cite[Theorem 5]{skinner-wiles1} to the associated $3$-adic representation.  
\end{exm}

\medskip

As a third and final application, we consider the existence of number fields with small ramification---these are linked to Hilbert modular forms via their associated Galois representations.

In the late 1990s, Gross proposed the following conjecture.

\begin{conj}\label{conj:unramified} 
For every prime $p$, there exists a nonsolvable Galois number field $K$ ramified only at $p$.
\end{conj}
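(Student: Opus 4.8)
The plan is to produce the field as the splitting field of a residual Galois representation attached to a Hilbert modular form whose level and whose base field are both supported only at $p$. First I would choose a totally real field $F$ with $\disc F$ a power of $p$ --- a subfield of the maximal totally real subfield of $\Q(\zeta_{p^k})$, say, or some low-degree field of prime-power discriminant --- together with a level $\frakN$ equal to a power of the prime $\frakp$ of $\Z_F$ above $p$ (possibly $\frakN=(1)$), and then run the algorithm of this article (in the generality of the main theorem, for arbitrary weight $k\in(\Z_{\geq 2})^{[F:\Q]}$ and class number) to compute $S_k(\frakN)$ over $F$ as a Hecke module. This produces the constituent newforms $f$, their Hecke eigenvalue systems $(a_\frakq(f))_\frakq$, and their coefficient fields $E_f$.

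Next I would fix a constituent $f$ and a prime $\frakl$ of $\Z_{E_f}$ above $p$ and pass to the residual representation $\overline{\rho}=\overline{\rho}_{f,\frakl}\colon\Gal(\Qbar/F)\to\GL_2(\F_q)$ obtained by reducing an integral model of the representation $\rho_{f,\frakl}$ recalled in \S 2, so that $\tr\overline{\rho}(\Frob_\frakq)\equiv a_\frakq(f)\pmod{\frakl}$ for all $\frakq\nmid p$ and $\det\overline{\rho}$ is known explicitly. Since $\frakN$ is a power of $\frakp\mid p$, the representation $\rho_{f,\frakl}$, and hence $\overline{\rho}$, is unramified outside the primes of $F$ above $p$. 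Let $K\subseteq\Qbar$ be the fixed field of $\ker\overline{\rho}$ and let $\widetilde K$ be the Galois closure of $K/\Q$. Because $F/\Q$ is ramified only at $p$ and $K/F$ is ramified only above $p$, both $K/\Q$ and $\widetilde K/\Q$ are ramified only at $p$; moreover $\widetilde K/\Q$ is Galois, and $\Gal(\widetilde K/F)$ has finite index in $\Gal(\widetilde K/\Q)$ and surjects onto $\Gal(K/F)\cong\img\overline{\rho}$. So it is enough to choose $f$ and $\frakl$ so that $\img\overline{\rho}$ is nonsolvable, since a group containing a finite-index nonsolvable subgroup is nonsolvable; then $\widetilde K$ is the desired number field.

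This last step is where the explicit eigenvalue data does its work. By Dickson's classification of the subgroups of $\PGL_2$ over a finite field, $\img\overline{\rho}$ fails to be nonsolvable only when its projective image is cyclic, dihedral, or one of the exceptional solvable groups $A_4,S_4$. Each of these possibilities severely constrains the reductions $a_\frakq(f)\bmod\frakl$: reducibility or induction from a quadratic character forces $a_\frakq\equiv 0$ on a positive-density set of primes together with congruences among the remaining ones, while an $A_4$- or $S_4$-projective image forces $a_\frakq^2/N\frakq\bmod\frakl$ into an explicit finite set. Since the algorithm supplies $a_\frakq(f)$ for arbitrarily many primes $\frakq$, I would eliminate each case by a finite computation and thereby conclude that the projective image of $\overline{\rho}$ contains $\PSL_2(\F_{q'})$ for some $q'\geq 4$ --- a nonabelian simple group --- which makes $\img\overline{\rho}$, and hence $\Gal(\widetilde K/\Q)$, nonsolvable.

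The hard part, and the reason Conjecture~\ref{conj:unramified} remains open, is the initial choice: the argument settles the conjecture for a given $p$ only once one has actually exhibited a form $f$ over \emph{some} field ramified only at $p$ whose mod-$p$ image is large, and it offers no mechanism to guarantee this for \emph{every} $p$. For many primes this is already available from classical level-$1$ cusp forms of suitable weight, and the Hilbert modular form method handles the remaining small primes and supplies a uniform framework; but a proof valid for all $p$ would require either an unconditional lower bound on $\dim S_k(\frakN)$ for weights and levels supported at $p$, together with an argument that not every constituent can be of CM type or otherwise small-image, or else a direct construction of the field $K$. Controlling the image of $\overline{\rho}$ uniformly in $p$ is precisely the ingredient that is missing.
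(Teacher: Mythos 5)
Your proposal follows essentially the same route as the paper's account of this conjecture: for $p\geq 11$ one uses the classical level-one forms $\Delta_k$ together with Serre and Swinnerton-Dyer's table of exceptional primes, while for $p\leq 7$ (where Khare--Wintenberger forces any such representation over $\Q$ to be reducible) one passes to residual representations attached to Hilbert newforms over a totally real field ramified only at $p$ and of level supported at $\frakp\mid p$, verifies large image by a finite computation against Dickson's classification, and descends to $\Q$ by taking the compositum of the Galois conjugates --- exactly the paper's treatment of $p=2,3,5$ (with $p=7$ settled by Dieulefait via a Siegel form). You also correctly identify that this is a case-by-case verification rather than a uniform argument: the statement is labelled a conjecture precisely because a suitable form with nonsolvable residual image must be exhibited for each $p$ separately, and the paper's resolution is the assembly of those explicit constructions.
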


Conjecture~\ref{conj:unramified} is true for $p\ge 11$ by an argument due to Serre and Swinnerton-Dyer \cite{serre3}.  For $k=12,16,18,20,22$ or $26$, let $\Delta_k \in S_k(1)$ be the unique newform of level 1 and weight $k$.  For each such $k$ and every prime $p$, there is an associated Galois representation $\overline{\rho}_{k,p}:\Gal(\overline{\Q}/\Q) \to \GL_2(\F_p)$ ramified only at $p$.  In particular, the field $K$ fixed by the kernel of $\rho_{k,p}$ is a number field ramified only at $p$ with Galois group $\Gal(K/\Q)=\img \overline{\rho}_{k,p}$.

We say that the prime $p$ is \emph{exceptional} for $k$ if $\rho_k$ is not surjective.  The ``open image'' theorem of Serre \cite{serre5} implies that the set of primes where $\overline{\rho}_k$ is exceptional is finite.  Serre and Swinnerton-Dyer produced the following table of exceptional primes:
\[ \begin{array}{c|c}
k&\mbox{\rm Exceptional $p$}\\\hline
12&2,3,5,7,23,691\\
16&2,3,5,7,11,31,59,3617\\
18&2,3,5,7,11,13,43867\\
20&2,3,5,7,11,13,283,617\\
22&2,3,5,7,13,17,131,593\\
26&2,3,5,7,11,17,19,657931
\end{array}
\]
(The large primes that occur are divisors of denominators of Bernoulli numbers.)  It follows that for every $p\ge 11$ that Conjecture \ref{conj:unramified} is true.  

Conversely, it is a consequence of the proof of Serre's conjecture by Khare and Wintenberger \cite{khare1} together with standard level lowering arguments that if $p \leq 7$ then any odd representation of the absolute Galois group $\Gal(\overline{\Q}/\Q)$ to $\GL_2(\overline{\F}_p)$ which is ramified only at $p$ is necessarily reducible (and thus solvable).  

To address Gross' conjecture, we instead look at the residual representations associated to Hilbert cusp forms of parallel weight $2$ and level $1$ over fields $F$ ramified only at a prime $p \leq 7$.  This idea was first pursued by the first author \cite{dembeleram2} following a suggestion of Gross, who found such a form with $F=\Q(\zeta_{32})^+$, the totally real subfield of the cyclotomic field $\Q(\zeta_{32})$, settling the case $p=2$.  This line of inquiry was followed further by the authors with Matthew Greenberg \cite{dgv}, settling the cases $p=3,5$.  (The case $p=7$ was recently settled by Dieulefait \cite{dieulefait} by considering the mod $7$ Galois representation attached to a genus $2$ Siegel cusp form of level $1$ and weight $28$.  Therefore Gross' conjecture is now a theorem.)  

We sketch below the resolution of the case $p=5$.  We take the base field $F=\Q(b)$ to be the subfield of $\Q(\zeta_{25})$ of degree $5$, where $b^5+5b^4-25b^2-25b-5=0$.  The field $F$ has strict class number $1$.  Let $E$ be the elliptic curve over $F$ with $j$-invariant given by
\begin{align*}
(5\cdot 7^{11}) j=& -16863524372777476b^4 - 88540369937983588b^3 + 11247914660553215b^2 \\
& \quad +   464399360515483572b + 353505866738383680
\end{align*}
and minimal conductor $\frakN$.  Then $\frakN=\gp_5\gp_7$, where $\gp_5=((-2b^3 - 12b^2 + 31b + 25)/7)$ is the unique prime above $5$ and $\gp_7=((-2b^4 - 9b^3 + 8b^2 + 53b + 6)/7)$ is one of the five primes above $7$.
Roberts \cite{roberts} showed that the mod $5$ Galois representation 
\[ \overline{\rho}_{E,5}:\Gal(\overline{F}/F) \to \End(E[5]) \cong \GL_2(\F_{\frakp_5})=\GL_2(\F_5) \] 
has projective image $\PGL_2(\F_5)$ and is ramified only at the prime $\gp_5$ (and not $\gp_7$).  (We will verify below that $E$ has indeed the right conductor.)  The representation $\overline{\rho}_{E,5}$ and its Galois conjugates gives an extension $K$ of $\Q$ with Galois group
\[ \Gal(K/\Q)\cong \PSL_2(\F_5)^5\rtimes \Z/10\Z. \]

By work of Skinner and Wiles (as in Example \ref{le30}), we prove that the $3$-adic representation $\rho_{E,3}$ associated to $E$ is modular, and hence $E$ itself is modular.  Since $E$ is modular and $F$ has odd degree, $E$ is uniformized by a Shimura curve.  Namely, let $B$ be the quaternion algebra over $F$ ramified at four of the five real places. Let $X_0^B(\gp_5\gp_7)$ be the Shimura curve associated to an Eichler order of level $\gp_5\gp_7$ contained in a maximal order $\CO$, and let $J_0^B(\gp_5\gp_7)$ be the Jacobian of $X_0^B(\gp_5\gp_7)$. We compute that $J_0^B(\gp_5\gp_7)^{\rm new}$ has dimension $203$. We then find the unique Hilbert newform $f_E$ of parallel weight $2$ and level $\gp_5\gp_7$ with integer Fourier coefficients which corresponds to $E$.  

The elliptic curve $E$ found by Roberts \cite{roberts} was obtained from our computations of Hilbert modular forms at level $\gp_5$ over $F$.  The space $S_2(\gp_5)^{\textup{new}}$ has 2 Hecke constituents of dimension 10 and 20, respectively. Let $f$ be a newform in the 20-dimensional constituent.  Let $\T_f$ be the restriction of the Hecke algebra $\T=\Z[T_\frakp]_{\frakp}$ to this constituent: this is the constituent which yields the Galois representation obtained in the 5-torsion of Roberts' curve.  Let $E_f=\T_f\otimes\Q=\Q(a_\gp(f))$ be the field of Fourier coefficients of $f$ and let $\Delta=\mathrm{Aut}(E_f)$. By direct calculations, we see that $K_f=E_f^{\Delta}$ is the (totally real) quartic field generated by a root of the polynomial $x^4 + 2x^3 - 75x^2 - 112x + 816$.  

The Galois group $\Gal(F/\Q)$ acts on $\T$ via its action on ideals of $\Z_F$, namely $\sigma(T_\frakp) = T_{\sigma(\frakp)}$.  This action preserves the decomposition of $\T$ into Hecke-irreducible components.  By work of Shimura, the action of $\Delta$ on Fourier expansions preserves the Hecke constituents. In particular, both these actions preserve $\T_f$ and hence $K_f$ and must be compatible.  Therefore, for each $\sigma\in\Gal(F/\Q)$, there is a unique $\tau=\tau(\sigma)\in\Delta$ such that, for all prime ideals $\gp\subset\Z_F$, we have
\[ a_{\sigma(\gp)}(f)=\tau(a_{\gp}(f)). \]
The map $\tau$ thus yields a homomorphism 
\begin{equation} \label{tausigma}
\begin{aligned}
\Gal(F/\Q) &\to \Delta \\
\sigma &\mapsto \tau(\sigma)
\end{aligned}
\end{equation}
By direct calculation, we show that this map is an isomorphism.  Since $\Gal(F/\Q)$ is abelian, the field $E_f$ must be a ray class field over $K_f$; in fact, we compute that it has conductor equal to a prime above 71 which splits completely in $E_f$.  The ideal $5\Z_{K_f}$ factors as $5\Z_{K_f}=\mathfrak{P}^2\mathfrak{P}'$. The prime $\frakP$ splits completely in $E_f$, and the primes above it in $E_f$ are permuted by $\Gal(E_f/K_f)=\Aut(E_f)$; the prime $\mathfrak{P}'$ is inert.  

We have a Galois representation
\[ \overline{\rho}_f:\Gal(\overline{F}/F) \to \GL_2(\T_f \otimes \F_5) \]
with $\Tr(\overline{\rho}_f(\Frob_\gp)) \equiv a_\gp(f) \pmod{5}$ and $\det(\overline{\rho}_f(\Frob_\gp)) \equiv N\gp \pmod{5}$ for all $\gp\nmid 5$.  Let $\mathfrak{m}_f^{(i)}$, $i=1,\ldots,5$ be the five maximal ideals above (the image of) $\gP$.
They give rise to the 5 residual representations 
\[ \overline{\rho}_f^{(i)}:\Gal(\overline{F}/F) \to \GL_2(\F_5). \]
Let $L_i$ be the fixed field of $\ker (\overline{\rho}_f^{(i)})$, and $L$ the compositum of the $L_i$.
Then $L$ is a Galois extension of $F$ and is ramified only at $5$.  Since $F$ is Galois and ramified only at $5$, and by the above $\Gal(F/\Q)$ permutes the fields $L_i$ so preserves $L$, we see that $L$ is a Galois extension of $\Q$ ramified only at $5$.  By a result of Shepherd-Barron and Taylor \cite{SBT}, each extension $L_i$ can be realized in the $5$-torsion of an elliptic curve $E_i/F$.  

Recall the projective representation $P\overline{\rho}_{E,5}$ from Roberts' elliptic curve $E$ is surjective, ramified at $\frakp_5$ but unramified at $\frakp_7$.  Therefore, the level $\mathfrak{p}_5\mathfrak{p}_7$ is a nonoptimal level for $\overline{\rho}_{E,5}$; thus, by Mazur's Principle~\cite{Jarvis}, we have $\overline{\rho}_{E,5}\cong\overline{\rho}_f^{(i)}$ for some $1\le i\le 5$.  In other words, the extension constructed by Roberts is isomorphic to our field $L$.

Roberts \cite{roberts} has given an explicit equation for the number field $L$ (obtained from the $5$-division polynomial of the elliptic curve $E$): the field $L$ is the the splitting field of the polynomial
\begin{align*}
&x^{25} - 25x^{22} + 25x^{21} + 110x^{20} - 625x^{19} + 1250x^{18} - 3625x^{17}\\
&\quad + 21750x^{16} - 57200x^{15} + 112500x^{14} - 240625x^{13} + 448125x^{12}\\
&\quad - 1126250x^{11} + 1744825x^{10} - 1006875x^9 - 705000x^8 + 4269125x^7\\
&\quad - 3551000x^6 + 949625x^5 - 792500x^4 + 1303750x^3 - 899750x^2 \\
&\quad + 291625x - 36535.
\end{align*}

\medskip

To conclude, we consider a question that touches on each of the above three subjects.  We reconsider Gross' conjecture (Conjecture \ref{conj:unramified}) in the case $p=2$ \cite{dembeleram2}.  The nonsolvable field which is ramified only at $2$ arises from the Galois representation associated to a constituent eigenform $f \in S_2(1)$ in a $16$-dimensional subspace of the space of Hilbert cusp forms of parallel weight $2$ and level $1$ over $F=\Q(\zeta_{32})^+$.  Let $E_f=\Q(a_\frakp)$ be the field of Fourier coefficients of $f$ and let $\Delta=\mathrm{Aut}(E_f)$. Let $K_f$ be the fixed field of $\Delta$ so that $\Gal(E_f/K_f)=\Delta$.  The map (\ref{tausigma}) in this context is again an isomorphism, and so $E_f$ is abelian over $K_f$ and $8=[F:\Q]$ divides $[E_f:\Q]=16$.  By direct calculations, we show that $K_f=\Q(\sqrt{5})$.

The Eichler-Shimura construction (Conjecture \ref{conj2}) predicts the existence a $16$-dimensional abelian variety $A_f$ defined over $F$ with everywhere good reduction and real multiplication by $E_f$ associated to $f$.  More should be true, as communicated to us by Gross (private communication).  In fact, $A_f$ should descend to an abelian variety of dimension $16$ over $\Q$, and we should have $L(A_f,s)=L(f,s)L(f^{\sigma},s)$, where $\sigma \in \Gal(E_f/\Q)$ is any element that restricts to the nontrivial element of $K_f=\Q(\sqrt{5})$.  The endomorphisms of $A_f$ over $\Q$ should be the ring of integers of $K_f$, and over $\Q$, the variety $A_f$ would have bad reduction only at the prime $2$; the nonsolvable extension would then arise as its $2$-division field.  The conductor $N=2^{124}=d^4$ of $A_f$ over $\Q$ can be computed from the functional equation of $L(f,s)$, where we note $d=2^{31}$ is the discriminant of $F$.  We note that $A_f$ is not of $\GL_2$-type over $\Q$ itself as it would be modular by the proof of Serre's conjecture.

Although one knows that the associated Galois representation exists by work of Taylor \cite{taylor1}, as Gross says, ``With such nice properties, it's a shame that we can't even prove that the abelian variety $A_f$ exists!  That's an advantage you have when $F$ has odd degree.''  

\section{Adelic quaternionic modular forms}\label{sec:adelic-quat-hmfs}

In this section, we begin again, and we revisit the definition of Hilbert and quaternionic modular forms allowing $F$ to have arbitrary class number; we refer to Hida \cite{Hidabook} as a reference for this section.

We renew our notation.  Let $F$ be a totally real field of degree $n=[F:\Q]$ with ring of integers $\Z_F$.  Let $B$ be a quaternion algebra over $F$ of discriminant $\frakD$.  Let $v_1,\dots,v_n$ be the real places of $F$ (abbreviating as before $x_i=v_i(x)$ for $x \in F$), and suppose that $B$ is split at $v_1,\dots,v_r$ and ramified at $v_{r+1},\dots,v_n$, i.e.\ 
\begin{equation} \label{BR}
B \hookrightarrow B_\infty = B \otimes_{\Q} \R \xrightarrow{\sim} \M_2(\R)^r \times \HH^{n-r}.
\end{equation}
Let $\iota_i$ denote the $i$th projection $B \to \M_2(\R)$ and $\iota=(\iota_1,\dots,\iota_r)$.  Let 
\[ F_+^\times=\{x \in F: x_i>0 \text{ for all $i$}\} \] 
be the group of totally positive elements of $F$ and let $\Z_{F,+}^\times = \Z_F^\times \cap F_+^\times$.  

Let $\calO_0(1) \subseteq B$ be a maximal order.  Let $\frakN$ be an ideal of $\Z_F$ coprime to $\frakD$ and let $\calO=\calO_0(\frakN) \subseteq \calO_0(1)$ be an Eichler order of level $\frakN$.  

With a view towards generalizations, rather than viewing modular forms as functions on (a Cartesian power of) the upper half-plane which transform in a certain way, we now view them instead more canonically as functions on $B_\infty^\times$.  Let $\calH^{\pm} = \C \setminus \R$ be the union of the upper and lower half-planes.  Via the embeddings $v_1,\dots,v_r$, corresponding to the first $r$ factors in (\ref{BR}), the group $B_\infty^\times$ acts on $(\calH^{\pm})^r$ on the right transitively with the stabilizer of $(\sqrt{-1},\dots,\sqrt{-1}) \in \calH^r$ being 
\[ K_\infty = (\R^\times \SO_2(\R))^r \times (\HH^\times)^{n-r}. \]
Therefore we can identify
\begin{equation} \label{identifyKoo}
\begin{aligned}
B_\infty^\times / K_\infty &\to (\calH^{\pm})^r\\
g &\mapsto z=g(\sqrt{-1},\dots,\sqrt{-1}).
\end{aligned}
\end{equation}

From this perspective, it is natural to consider the other (nonarchimedean) places of $F$ at the same time.  Let 
\[ \Zhat = \varprojlim_{n} \Z/n\Z=\prodprime_p \Z_p \] 
(where ${}'$ denotes the restricted direct product) and let $\widehat{\phantom{x}}$ denote tensor with $\Zhat$ over $\Z$. 
We will define modular forms on $B$ as analytic functions on $B_\infty^\times \times \Bhat^\times$ which are invariant on the left by $B^\times$ and transform by $K_\infty \times \calOhat^\times$ in a specified way.  

We must first define the codomain of these functions to obtain forms of arbitrary weight.  Let $k=(k_1,\dots,k_n) \in (\Z_{\geq 2})^n$ and suppose that the $k_i$ have the same parity; we call $k$ a \emph{weight}.  Let
\[ k_0=\max_i k_i, \quad m_i = (k_0-k_i)/2, \quad\text{and}\quad w_i=k_i-2. \]

For an integer $w\geq 0$, let $P_w=P_w(\C)$ be the subspace of $\C[x,y]$ consisting of homogeneous polynomials of degree $w$.  For $\gamma\in\GL_2(\C)$, let $\overline{\gamma}$ be the adjoint of $\gamma$, so that $\gamma\overline{\gamma}=\overline{\gamma}\gamma=\det\gamma$.  Define a right action of $\GL_2(\C)$ on $P_w(\C)$ by
\[
(q \cdot \gamma)(x,y) = 
q((x\,\,y)\bar{\gamma})=q(dx-cy,-bx+ay)
\]
for $\gamma=\left(\begin{matrix}a&b\\c&d\end{matrix}\right) \in \GL_2(\C)$ and $q \in P_w(\C)$.  For $m\in\Z$, $\GL_2(\C)$ also acts on $P_w(\C)$ via the character $\gamma\mapsto(\det\gamma)^m$. By twisting the above action by this character, we get a right $\GL_2(\C)$-module denoted by $P_w(m)(\C)$. Define the right $\GL_2(\C)^{n-r}$-module
\begin{equation} \label{WCC}
W_k(\C)=W(\C)= P_{w_{r+1}}(m_{r+1})(\C)\otimes \cdots\otimes P_{w_n}(m_n)(\C).
\end{equation} 
(By convention, if $r=n$ then we set $W_k(\C)=\C$.)  For the ramified real places $v_{r+1},\dots,v_n$ of $F$, we choose splittings
\[ \iota_i : B \hookrightarrow B \otimes_F \C \cong \M_2(\C). \]
We abbreviate as above $\gamma_i = \iota_i(\gamma)$ for $\gamma \in B$.  Then $W_k(\C)$ becomes a right $B^\times$-module via $\gamma \mapsto (\gamma_{r+1},\dots,\gamma_n) \in \GL_2(\C)^{n-r}$.  From now on, $W_k(\C)$ will be endowed with this action, which we denote by $x \mapsto x^\gamma$ for $x \in W_k(\C)$ and $\gamma \in B^\times$.  One may identify $W_k(\C)$ with the subspace of the algebra $\C[x_{r+1},y_{r+1},\ldots,x_n,y_n]$ consisting of those polynomials $q$ which are homogeneous in $(x_i,y_i)$ of degree $w_i$ but with a twisted action.

We consider the space of functions $\phi:B_\infty^\times\times \Bhat^\times \to W_k(\C)$, with a right action of $K_\infty^\times\times \Bhat^\times$ defined by
\begin{equation} \label{fslshadel}
(\phi \slsh{k} (\kappa,\betahat))(g, \alphahat)= \left(\prod_{i=1}^{r} \frac{j(\kappa_i, \sqrt{-1})^{k_i}}{(\det \kappa_i)^{m_i+k_i-1}}\right) \phi(g \kappa^{-1}, \alphahat \betahat^{-1})^{\kappa}.
\end{equation}
where recall $j(\kappa_i,\sqrt{-1})=c_i \sqrt{-1} + d_i \in \C$ if $\kappa_i = \begin{pmatrix} a_i & b_i \\ c_i & d_i \end{pmatrix}$.   (The presence of the inverses is forced as we want a right action on functions via multiplication of the argument on the right.  This almost extends to a right action of $B_\infty^\times\times \Bhat^\times$, except that $j(gh,z)=j(g,hz)j(h,z) \neq j(g,z) j(h,z)$ unless $h$ fixes $z$.)

\begin{defn}\label{def:quat-mod-forms}
A \emph{(quaternionic) modular form} of weight $k$ and level $\frakN$ for $B$ is an analytic function 
\[ \phi: B_\infty^\times \times \Bhat^\times \to W_k(\C) \] 
such that for all $(g,\alphahat) \in B_\infty^\times \times \Bhat^\times$ we have:
\begin{enumroman}
\item $(\phi\slsh{k}(\kappa,\uhat))(g, \alphahat)=\phi(g, \alphahat)$ for all $\kappa \in K_\infty$ and $\uhat \in \calOhat^\times$; and
\item $\phi(\gamma g,\gamma \alphahat) = \phi(g,\alphahat)$ for all $\gamma \in B^\times$.
\end{enumroman}
\end{defn} 

In other words, Definition~\ref{def:quat-mod-forms} says that a quaternionic modular form of weight $k$ and level $\frakN$ is an analytic function which is $B^\times$-invariant on the left and $(K_\infty\times \calOhat^\times)$-equivariant on the right under the action (\ref{fslshadel}).  In particular, we have 
\begin{equation} \label{indepofuhat}
\phi(g,\alphahat \uhat) = (\phi \slsh{k} (1,\uhat))(g,\alphahat \uhat) = \phi(g, \alphahat)
\end{equation}
for all $\uhat \in \calOhat^\times$ and
\begin{equation} \label{indepofkappa}
\phi(g\kappa,\alphahat) = (\phi \slsh{k} (\kappa,1))(g\kappa, \alphahat) = \left(\prod_{i=1}^{r} \frac{j(\kappa_i, \sqrt{-1})^{k_i}}{(\det \kappa_i)^{m_i+k_i-1}}\right) \phi(g, \alphahat)^{\kappa}.
\end{equation}

We denote by $M_k^B(\frakN)$ the space of quaternionic modular forms of weight $k$ and level $\frakN$ for $B$.

Let $\phi$ be a quaternionic modular form.  For $(z,\alphahat\calOhat^\times) \in (\calH^{\pm})^r\times \Bhat^\times/\calOhat^\times$, choose $g \in B_\infty^\times$ such that $g(\sqrt{-1},\dots,\sqrt{-1})=z$ and define
\begin{equation} \label{onupperH}
\begin{aligned}
f:(\calH^{\pm})^r \times \Bhat^\times/\calOhat^\times &\to W_k(\C) \\
f(z,\alphahat\calOhat^\times) &= \left(\prod_{i=1}^{r} \frac{(\det g_i)^{m_i+k_i-1}}{j(g_i, \sqrt{-1})^{k_i}}\right)\phi(g, \alphahat)^{g^{-1}}.
\end{aligned}
\end{equation}
The map $f$ in (\ref{onupperH}) is well-defined by (\ref{indepofuhat}) and (\ref{indepofkappa}): we have
\[ 
\left(\prod_{i=1}^{r} \frac{(\det g_i \kappa_i)^{m_i+k_i-1}}{j(g_i \kappa_i, \sqrt{-1})^{k_i}}\right)\phi(g \kappa, \alphahat \uhat)^{(g\kappa)^{-1}} =
\left(\prod_{i=1}^{r} \frac{(\det g_i)^{m_i+k_i-1}}{j(g_i, \sqrt{-1})^{k_i}}\right)\phi(g,\alphahat)^{g^{-1}} \]
using the fact that $j(g_i\kappa_i,\sqrt{-1})=j(g_i,\sqrt{-1})j(\kappa_i,\sqrt{-1})$.

The identity 7.5(ii) is translated as follows.  Let $\gamma \in B^\times$.  Then $g(\sqrt{-1}, \dots, \sqrt{-1}) = z$ is equivalent to $(\gamma g)(\sqrt{-1}, \dots, \sqrt{-1})=\gamma z$, so
\begin{equation} \label{fgammarelat}
\begin{aligned}
f(\gamma z, \gamma \alphahat \calOhat^\times) &= 
\left(\prod_{i=1}^{r} \frac{j(\gamma_i g_i, \sqrt{-1})^{k_i}}{(\det \gamma_i g_i)^{m_i+k_i-1}}\right)\phi(\gamma g, \gamma \alphahat)^{(\gamma g)^{-1}} 
\\
&= \left(\prod_{i=1}^{r} \frac{j(\gamma_i, z_i)^{k_i}}{(\det \gamma_i)^{m_i+k_i-1}}\right)
\left(\prod_{i=1}^{r} \frac{j(g_i, \sqrt{-1})^{k_i}}{(\det g_i)^{m_i+k_i-1}}\right)
\phi(g, \alphahat)^{g^{-1} \gamma^{-1}} \\
&= 
\left(\prod_{i=1}^{r} \frac{j(\gamma_i,z_i)^{k_i}}{(\det \gamma_i)^{m_i+k_i-1}}\right) f(z,\alphahat \calOhat^\times)^{\gamma^{-1}}
\end{aligned}
\end{equation}
where now $j(\gamma_i,z_i)=c_i z_i + d_i \in \C$ if $\gamma_i = \begin{pmatrix} a_i & b_i \\ c_i & d_i \end{pmatrix}$ for all $\gamma \in B^\times$ and we have the relation $j(\gamma \delta,z)=j(\gamma,\delta z)j(\delta,z)$ for all $z \in \calH$ and $g,h \in B^\times$.  Accordingly, we define a right action of $B^\times$ on the space of functions in~(\ref{onupperH}) by 
\begin{equation} \label{actonH}
(f\slsh{k}\gamma)(z, \alphahat \calOhat^\times)= \left(\prod_{i=1}^{r} \frac{(\det \gamma_i)^{m_i+k_i-1}}{j( \gamma_i, z_i)^{k_i}}\right) f(\gamma z, \gamma \alphahat \calOhat^\times)^{\gamma}.
\end{equation}
Then
\[ (f \slsh{k} \gamma)(z, \alphahat\calOhat^\times) = f(z,\alphahat \calOhat^\times). \]
Note that the central subgroup $F^\times \subseteq B^\times$ acts by $(f \slsh{k} a)(z, \alphahat \calOhat^\times) = N_{F/\Q}(a)^{k_0-2} f(z, a\alphahat \calOhat^\times)$ for $a\in F^\times$. 

The $\C$-vector space of modular forms of weight $k$ and level $\frakN$ on $B$ is finite-dimensional and is denoted $M_k^B(\frakN)$.  

\begin{lem} \label{modformourdef}
There is a bijection between $M_k^B(\frakN)$ and the space of functions $f:(\calH^{\pm})^r \times \Bhat^\times/\calOhat^\times \to W_k(\C)$ that are holomorphic in the first variable and locally constant in the second one and such that
\[f\slsh{k}\gamma=f \text{ for all $\gamma\in B^\times$}. \]
\end{lem}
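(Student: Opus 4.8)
The plan is to write down the bijection explicitly and to observe that the formal heart of the argument has already appeared in the displayed identities surrounding (\ref{onupperH}). In the forward direction, to $\phi\in M_k^B(\frakN)$ I would attach the function $f$ of (\ref{onupperH}). The steps are then: (a) check that $f$ is independent of the choice of $g\in B_\infty^\times$ with $g(\sqrt{-1},\dots,\sqrt{-1})=z$ --- this is precisely (\ref{indepofkappa}) together with (\ref{indepofuhat}); (b) observe that condition (i) of Definition~\ref{def:quat-mod-forms}, applied with $\kappa=1$, says that $\phi$ is right $\calOhat^\times$-invariant in the adelic variable, so that the argument $\alphahat\calOhat^\times$ of $f$ is legitimate and $f$ genuinely descends to $(\calH^\pm)^r\times\Bhat^\times/\calOhat^\times$; (c) check $f\slsh{k}\gamma=f$ for all $\gamma\in B^\times$, which is exactly the chain of equalities (\ref{fgammarelat}) read through the definition (\ref{actonH}) of the slash action.

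For the reverse direction I would solve (\ref{onupperH}) for $\phi$: given $f$ as in the statement, set
\[
\phi(g,\alphahat)=\Biggl(\prod_{i=1}^{r}\frac{j(g_i,\sqrt{-1})^{k_i}}{(\det g_i)^{m_i+k_i-1}}\Biggr)\, f\bigl(g(\sqrt{-1},\dots,\sqrt{-1}),\,\alphahat\calOhat^\times\bigr)^{g},
\]
which involves no choices and so is well defined. The remaining steps are to verify conditions (i) and (ii) of Definition~\ref{def:quat-mod-forms} for this $\phi$: for (i) one uses that $\kappa\in K_\infty$ fixes $(\sqrt{-1},\dots,\sqrt{-1})$, the relations $j(\kappa_i^{-1},\sqrt{-1})=j(\kappa_i,\sqrt{-1})^{-1}$ and $\det(g\kappa^{-1})_i=\det g_i/\det\kappa_i$, and the triviality of the $\calOhat^\times$-action on $\Bhat^\times/\calOhat^\times$, so that all the automorphy factors in (\ref{fslshadel}) telescope away; for (ii) one simply runs (\ref{fgammarelat}) backwards. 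Since the two displayed formulas are algebraic inverses of one another and both are manifestly $\C$-linear, this already produces the desired vector-space isomorphism --- modulo matching the analytic conditions on the two sides.

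That last matching is the only step that is not bookkeeping, and I expect it to be the main (really the only) obstacle. Local constancy of $f$ in its second variable is automatic, since $\calOhat^\times$ is compact open in $\Bhat^\times$ and hence $\Bhat^\times/\calOhat^\times$ is discrete; on the $\phi$-side this is just (\ref{indepofuhat}). For the first variable the point is that $g\mapsto g(\sqrt{-1},\dots,\sqrt{-1})$ realizes the identification $B_\infty^\times/K_\infty\cong(\calH^\pm)^r$ of (\ref{identifyKoo}) and the product of factors in (\ref{onupperH}) trivializes the associated automorphy line bundle; I would then argue that under this identification holomorphy of $f$ in $z$ is equivalent to the usual holomorphy (Cauchy--Riemann) condition on $\phi$ at the $r$ split archimedean places, which is exactly what ``analytic'' is taken to mean in Definition~\ref{def:quat-mod-forms} (cf.\ Hida \cite{Hidabook}). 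Granting that translation, the lemma follows, since the algebraic content has already been established in (\ref{indepofuhat})--(\ref{actonH}).
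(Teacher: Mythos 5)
Your proposal is correct and follows essentially the same route as the paper, whose justification of Lemma \ref{modformourdef} is exactly the discussion surrounding (\ref{onupperH})--(\ref{actonH}): the forward map is (\ref{onupperH}), well-definedness is (\ref{indepofuhat})--(\ref{indepofkappa}), and the $B^\times$-equivariance is the computation (\ref{fgammarelat}). You supply somewhat more detail than the paper does (the explicit inverse formula, the telescoping check of condition (i), and the matching of holomorphy/local constancy), but nothing differs in substance.
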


From now on, we will only work with modular forms $f$ as presented in Lemma \ref{modformourdef}.  

We define the \emph{quaternionic Shimura variety of level $\frakN$} associated to $B$ as the double coset
\[X_0^B(\frakN)(\C)=B^\times \backslash (B_\infty^\times/K_\infty \times \Bhat^\times/\calOhat^\times) =
B^\times \backslash ((\calH^{\pm})^r \times \Bhat^\times/\calOhat^\times); \]
the set $X_0^B(\frakN)(\C)$ can be equipped the structure of a complex (possibly disconnected) Riemannian manifold of dimension $r$.

\begin{exm}
We recover first the definition of classical modular forms when $F=\Q$ and $B=\M_2(\Q)$.  For simplicity, we take $N=1$.  In this case, $r=1$ so $W_k(\C)=\C$ and $m=0$. 

The action (\ref{actonH}) is simply
\[ (f \slsh{k} \gamma)(z,\alphahat \calOhat^\times) = \frac{(\det \gamma)^{k-1}}{j(\gamma,z)^k} f(\gamma z,\gamma \alphahat \calOhat^\times). \]
We take the definition (\ref{onupperH}) as our starting point.  The element $\begin{pmatrix} -1 & 0 \\ 0 & 1 \end{pmatrix} \in B^\times = \GL_2(\Q)$ identifies the upper and lower half-planes, so a modular form $f:\calH^\pm \times \GL_2(\Qhat)/\GL_2(\Zhat) \to \C$ is determined by its restriction $f:\calH \times \GL_2(\Qhat)/\widehat{\Gamma}_0(N) \to \C$ and the subgroup of $\GL_2(\Q)$ which preserves $\calH$ is exactly $\GL_2^+(\Q)$.  

We wish to recover the classical action using this new action of $\GL_2^+(\Q)$, so we are led to consider the double coset $\GL_2^+(\Q)\backslash\!\GL_2(\Qhat)/\GL_2(\Zhat)$.  An element of this double coset is specified by an element $\alphahat\in \GL_2(\Qhat)$ up to right-multiplication by $\GL_2(\Zhat)$, i.e.\ a $\Zhat$-lattice in $\Qhat^2$, i.e.\ $\widehat{\Lambda} \in \Lat(\widehat{\Q}^2)$, specified by the rows of $\alphahat$.  But the map $\Lat(\Q^2) \to \Lat(\widehat{\Q}^2)$ by $\Lambda \mapsto \widehat{\Lambda}$ is a bijection, with inverse $\widehat{\Lambda} \mapsto \widehat{\Lambda} \cap \Q^2$.  And since $\GL_2^+(\Q)$ acts transitively on the left on the set of lattices $\Lat(\Q^2)$, we conclude that $\# \GL_2^+(\Q) \backslash\!\GL_2(\Qhat)/\GL_2(\Zhat) = 1$.  

It follows that $f$ is uniquely specified by the function $f(z,\calOhat^\times)$ for $z \in \calH$, which by abuse of notation we write simply $f:\calH \to \C$.  The stabilizer of $\GL_2^+(\Q)$ acting on $\GL_2(\Zhat)$ by multiplication on the left is $\GL_2^+(\Z)$, so we recover the condition
\[ (f \slsh{k} \gamma)(z) = \frac{(\det \gamma)^{k-1}}{j(\gamma,z)^k} f(\gamma z) \]
for all $\gamma \in \GL_2^+(\Z)$, which is exactly the definition given in Section 1.

The interested reader can modify this argument for $N>1$; alternatively, we give a general derivation below.
\end{exm}

We now define cusp forms.  If $B \cong \M_2(F)$, then we are in the situation of Hilbert modular forms (but over a field with arbitrary class number): so we define a \emph{cusp form} to be a form $f$ such that $f(z) \to 0$ whenever $z$ tends to a cusp $\PP^1(F) \hookrightarrow \PP^1(\R)^n$.  Otherwise, the Shimura variety $X_0^B(\frakN)(\C)$ is compact.  If $B$ is indefinite, so $0<r \leq n$, then there are no cusps, and we define the space of cusp forms to be $S_k^B(\frakN)=M_k^B(\frakN)$.  Finally, suppose $B$ is definite; then $r=0$.  If $k=(2,\dots,2)$, then we define $E_k^B(\frakN)$ to be the space of those $f \in M_k^B(\frakN)$ such that $f$ factors through $\nrd:\Bhat^\times \to \Fhat^\times$; otherwise, we set $E_k^B(\frakN)=0$.  Then there is an orthogonal decomposition $M_k^B(\frakN)=S_k^B(\frakN) \oplus E_k^B(\frakN)$ and we call $S_k^B(\frakN)$ the space of \emph{cusp forms} for $B$.

The spaces $M_k^B(\frakN)$ and $S_k^B(\frakN)$ come equipped with an action of pairwise commuting diagonalizable \emph{Hecke operators} $T_\frakn$ indexed by the nonzero ideals $\frakn$ of $\Z_F$, defined as follows.  
Given $f\in S_k^B(\frakN)$ and $\pihat \in \Bhat^\times$, we define a Hecke operator associated to $\pihat$ as follows: we write
\begin{equation} \label{doublecosetadel}
\calOhat^\times \pihat \calOhat^\times= \bigsqcup_i \calOhat^\times \pihat_i 
\end{equation}
and let 
\begin{equation} \label{Heckeindefiniteadel1}
(T_\pi f)(z, \alphahat \calOhat^\times)= \sum_{i} f(z, \alphahat \pihat_i^{-1} \calOhat^\times).
\end{equation}
(Again, although it may seem unnatural, the choice of inverse here is to make the definitions agree with the classical case.)

For a prime $\frakp$ of $\Z_F$ with $\frakp \nmid \frakD\frakN$, we denote by $T_\frakp$ the Hecke operator $T_{\pihat}$ where $\pihat \in \Bhat^\times$ is such that $\pihat_v=1$ for $v \neq \frakp$ and $\pihat_\frakp=\begin{pmatrix} p & 0 \\ 0 & 1 \end{pmatrix} \in \calO \otimes_{\Z_F} \Z_{F,\frakp} \cong \M_2(\Z_{F,\frakp})$ where $p \in \Z_{F,\frakp}$ is a uniformizer at $\frakp$.  

Equivalently, for a prime $\frakp$ and $\phat \in \ZFhat$ such that $\phat \ZFhat \cap \Z_F = \frakp$, we define 
\[ \Theta(\frakp)= \calOhat^\times  \backslash \{\pihat \in\calOhat : \nrd(\pihat)=\phat\}. \]
a set of cardinality $N\frakp+1$, and define
\begin{equation} \label{Heckeindefiniteadel2}
(T_\frakp f)(z,\alphahat \calOhat^\times) = \sum_{\pihat \in \Theta(\frakp)} f(z,\alphahat \pihat^{-1} \calOhat^\times)
\end{equation} 
where we have implicitly chosen representatives $\pihat \in \calOhat$ for the orbits in $\Theta(\frakp)$.  For an ideal $\frakn$ of $\Z_F$, the operator $T_\frakn$ is defined analogously. 

We say that a cusp form $f$ is a \emph{newform} if it is an eigenvector of the Hecke operators which does not belong to $M_k(\frakM)$ for $\frakM \mid \frakN$.

\medskip

To unpack this definition further, and to relate this definition with the definitions given previously, we investigate the structure of the Shimura variety 
\[ X_0^B(\frakN)(\C)=B^\times \backslash (B_\infty^\times/K_\infty \times \Bhat^\times/\calOhat^\times) = B^\times \backslash ((\calH^{\pm})^r \times \Bhat^\times / \calOhat^\times). \]

By Eichler's theorem of norms \cite[Theor\`eme III.4.1]{Vigneras}, we have 
\[ \nrd(B^\times)=F_{(+)}^\times = \{ a \in F^\times : v_i(a)>0\text{ for $i=r+1,\dots,n$}\}, \]
i.e.\ the norms from $B^\times$ consists of the subgroup of elements of $F$ which are positive at all real places which are ramified in $B$.  In particular, $B^\times/B_+^\times \cong (\Z/2\Z)^r$, where 
\[ B_+^\times=\{ \gamma \in B : \nrd(\gamma) \in F_+^\times\} \] 
is the subgroup of $B^\times$ whose elements have totally positive reduced norm.  

The group $B_+^\times$ acts on $\calH^r$, therefore we may identify
\[ X_0^B(\frakN)(\C) = B_+^\times \backslash (\calH^r \times \Bhat^\times / \calOhat^\times) \]
and a modular form on $(\calH^{\pm})^r \times \Bhat^\times/\calOhat^\times$ can be uniquely recovered from its restriction to $\calH^r \times \Bhat^\times/\calOhat^\times$.  Now we have a natural (continuous) projection map
\[ X_0^B(\frakN)(\C) \to B_+^\times \backslash \Bhat^\times / \calOhat^\times. \]
The reduced norm gives a surjective map
\begin{equation} \label{strongapprox}
\nrd:B_+^\times \backslash \Bhat^\times / \calOhat^\times \to F_+^\times \backslash \Fhat^\times / \ZFhat^\times \cong \Cl^+ \Z_F.
\end{equation}
where $\Cl^+ \Z_F$ denotes the strict class group of $\Z_F$, i.e.\ the ray class group of $\Z_F$ with modulus equal to the product of all real (infinite) places of $F$.  The theorem of strong approximation \cite[Th\'eor\`eme III.4.3]{Vigneras} implies that (\ref{strongapprox}) is a bijection if $B$ is indefinite.  So our description will accordingly depend on whether $B$ is indefinite or definite.  

\medskip

First, suppose that $B$ is indefinite.  Then space $X_0^B(\frakN)(\C)$ is the disjoint union of connected Riemannian manifolds indexed by $\Cl^+ \Z_F$, which we identify explicitly as follows.  Let the ideals $\fraka \subseteq \Z_F$ form a set of representatives for $\Cl^+ \Z_F$, and let $\ahat \in \ZFhat$ be such that $\ahat\,\ZFhat \cap \Z_F = \fraka$. (For the trivial class $\fraka=\Z_F$, we choose $\ahat=\widehat{1}$). By (\ref{strongapprox}), there exists $\alphahat \in \Bhat^\times$ such that $\nrd(\alphahat)=\ahat$. We let $\calO_{\fraka}=\alphahat \calOhat \alphahat^{-1} \cap B$ so that $\calO_{(1)}=\calO$, and we put $\Gamma_\fraka=\calO_{\fraka,+}^\times = \calOhat_{\fraka}^\times \cap B_+^\times$. Then we have 
\begin{equation} \label{breakup}
 X_0^B(\frakN)(\C) = \bigsqcup_{[\fraka] \in \Cl^+(\Z_F)} B_+^\times (\calH^r \times \alphahat \calOhat^\times) \xrightarrow{\sim} \bigsqcup_{[\fraka] \in \Cl^+(\Z_F)} \Gamma_\fraka \backslash \calH^r,
\end{equation}
where the last identification is obtained via the bijection
\begin{equation} \label{BplusHr}
\begin{aligned} 
B_+^\times \setminus (\calH^r \times \alphahat \calOhat^\times) &\xrightarrow{\sim} \Gamma_\fraka \backslash \calH^r \\
B_+^\times (z,\alphahat \calOhat^\times) &\mapsto z
\end{aligned}
\end{equation}

Now let $f \in M_k^B(\frakN)$, so that $f: (\calH^\pm)^r \times \Bhat^\times/\calOhat^\times \to W_k(\C)$ satisfies $f \slsh{k} \gamma = f$ for all $\gamma \in B^\times$.  Let $M_k^B(\frakN,\fraka)$ be the space of functions $f_\fraka:\calH^r \to W_k(\C)$ such that $f_\fraka \slsh{k} \gamma = f_\fraka$ for all $\gamma \in \Gamma_\fraka$, where we define
\[ (f_\fraka \slsh{} \gamma)(z) = \left( \prod_{i=1}^r \frac{(\det \gamma_i)^{m_i+k_i-1}}{j(\gamma_i, z)^{k_i}} \right)f_\fraka(\gamma z)^{\gamma} \]
for $\gamma \in B^\times$.  Then by (\ref{BplusHr}), the map
\begin{equation}\label{adelic-to-tuples}
\begin{aligned}
M_k^B(\frakN)&\to \bigoplus_{[\fraka] \in \Cl^+(\Z_F)} M_k^B(\frakN, \fraka)\\
f&\mapsto (f_\fraka)
\end{aligned}
\end{equation} 
where
\begin{align*}
f_\fraka : \calH^r &\to W_k(\C) \\
f_\fraka(z) & =f(z,\alphahat\calOhat^\times)
\end{align*}
is an isomorphism.

We now explain how the Hecke module structure on the left-hand side of (\ref{adelic-to-tuples}), defined in (\ref{Heckeindefiniteadel1})--(\ref{Heckeindefiniteadel2}), is carried over to the right-hand side. We follow Shimura~\cite[Section 2]{shimura1}.  We consider the action on the summand corresponding to $[\fraka] \in \Cl^+(\Z_F)$.  Extending the notation above, among the representatives chosen, let $\frakb$ be such that $[\frakb]=[\fraka\frakp^{-1}]$, let $\bhat\Zhat_F \cap \Z_F = \frakb$, and let $\betahat \in \Bhat^\times$ be such that $\nrd(\betahat)=\bhat$.  

By definition, 
\[ (T_\frakp f)_\fraka(z) = (T_\frakp f)(z,\alphahat \calOhat^\times) = \sum_{\pihat \in \Theta(\frakp)} f(z, \alphahat \pihat^{-1} \calOhat^\times ). \]
Let $\pihat \in \Theta(\frakp)$.  Then by strong approximation, we have 
\begin{equation} \label{apOb}
\alphahat \pihat^{-1} \calOhat \betahat^{-1} \cap B = \varpi^{-1} \calO_\frakb
\end{equation}
with $\varpi \in B^\times$, since this lattice has reduced norm $[\fraka \frakp^{-1} (\fraka \frakp^{-1})^{-1} ] = [(1)]$.  Therefore, there exists $\uhat \in \calOhat$ such that $\alphahat \pihat^{-1} \uhat \betahat^{-1} = \varpi^{-1}$ whence
\begin{equation} \label{heckejv}
(T_\frakp f)_\fraka(z) \sum_{\pihat \in \Theta(\frakp)} f(z, \alphahat \pihat^{-1} \calOhat^\times) = \sum_{\varpi} f(z, \varpi^{-1} \betahat\calOhat^\times);
\end{equation}
the second sum runs over a choice of $\varpi$ as in equation (\ref{apOb}) corresponding to each $\pihat \in \Theta(\frakp)$.  This latter sum can be identified with a sum over values of $f_\frakb$ as follows.  We have
\begin{align*} 
f(z, \varpi^{-1} \betahat \calOhat^{\times}) &= (f\slsh{k}\varpi)(z, \varpi^{-1} \betahat \calOhat^\times) \\
&= \left(\prod_{i=1}^{r} \frac{(\det \varpi_i)^{m_i+k_i-1}}{j( \varpi_i, z_i)^{k_i}}\right) f(\varpi z, \betahat \calOhat^\times)^{\varpi} = (f_\frakb \slsh{k} \varpi)(z).
\end{align*}
The first equality follows from the $B^\times$-invariance of $f$ and the others by definition of the slash operators.  Putting these together, we have
\begin{equation} \label{finalhecke!}
(T_\frakp f)_\fraka(z)  = \sum_{\varpi} (f_\frakb \slsh{k} \varpi)(z).
\end{equation}
(The naturality of this definition explains the choice of inverses above.)

This adelic calculation can be made global as follows.  Let $I_\fraka = \alphahat \calOhat \cap B$ and $I_\frakb = \betahat \calOhat \cap B$.  For $\pihat \in \Theta(\frakp)$, we have
\[ \alphahat \pihat^{-1} \calOhat \betahat^{-1} \cap B = \varpi^{-1} \calO_\frakb \]
hence
\begin{equation} \label{IaIbJa}
\calO_\frakb \varpi =  \betahat \calOhat \pihat \alphahat^{-1} = (\betahat \calOhat) \alphahat^{-1} (\alphahat \calOhat \alphahat^{-1}) \alphahat \pihat \alphahat^{-1} \cap B = I_\frakb I_\fraka^{-1} J .
\end{equation}
The elements $\varpi$ thus obtained are characterized by their norms (in the right lattice), as with the Hecke operators defined previously (\ref{heckethetapclno1}): we analogously define
\begin{equation} \label{thetapjv}
\begin{aligned} 
\Theta(\frakp)_{\fraka,\frakb} &= \Gamma_\frakb \setminus \{ \varpi \in I_\frakb I_\fraka^{-1} \cap B^\times_+ : 
\nrd(I_\frakb I_\fraka^{-1})\frakp= (\nrd(\varpi)) \}  \\
&= \Gamma_\frakb \setminus \{ \varpi \in I_\frakb I_\fraka^{-1} \cap B^\times_+ : \nrd(\varpi)\frakb=\fraka\frakp\}.
\end{aligned}
\end{equation}
Then for $f_\frakb \in M_k^B(\frakN,\frakb)$, we have $T_\frakp f_\frakb \in M_k^B(\frakN,\fraka)$ and
\[ T_\frakp f_\frakb = \sum_{\varpi \in \Theta(\frakp)_{\fraka,\frakb}} f_\frakb \slsh{k} \varpi \]
where $[\frakb]=[\fraka\frakp^{-1}]$.

\begin{exm}
If $F$ has strict class number $1$, then $I_\fraka=I_\frakb=\calO=\calO_a$ so $\calO_\frakb \varpi_a = \calO \pi_a$ as in Section 5.
\end{exm}

We note that the isomorphism~(\ref{adelic-to-tuples}) preserves the subspace of cusp forms in a way that is compatible with the Hecke action, so we have a decomposition 
\[ S_k^B(\frakN) \xrightarrow{\sim} \bigoplus_{[\fraka] \in \Cl^+(\Z_F)} S_k^B(\frakN, \fraka). \]

\begin{exm}
Let $B=\M_2(F)$, and let $\calO=\calO_0(\frakN) \subset \calO_0(1)=\M_2(\Z_F)$.  Then we may take $\alphahat=\begin{pmatrix} \ahat & 0 \\ 0 & 1 \end{pmatrix} \in \GL_2(\Fhat)$, and so we find simply that
\[ \calO_{\fraka} = \alphahat \M_2(\Z_F) \alphahat^{-1} \cap B = \begin{pmatrix} \Z_F & \fraka \\ \frakN\fraka^{-1} & \Z_F \end{pmatrix} = \calO_0(\frakN,\fraka) \]
Let
\[ \Gamma_0(\frakN,\fraka)=\calO_0(\frakN,\fraka)_{+}^\times = 
\left\{ \gamma = \begin{pmatrix} a & b \\ c & d \end{pmatrix} \in \calO_0(\frakN, \fraka): \det\gamma\in \Z_{F, +}^\times\right\}. \]
Then 
\[ X_0^B(\frakN)(\C) = \bigsqcup_{[\fraka] \in \Cl^+(\Z_F)} \Gamma_0(\frakN,\fraka) \backslash \calH^n \]
is a disjoint union.  

A \emph{Hilbert modular form of weight $k$ and level $\frakN$} is a tuple $(f_{\fraka})$ of holomorphic functions $f_{\fraka}: \calH^n \to \C$, indexed by $\Cl^+ \Z_F$, such that for all $\fraka$ we have 
\[ (f_\fraka \slsh{k} \gamma)(z)=f_{\fraka}(z)\text{ for all $\gamma \in \Gamma_0(\frakN,\fraka)$} \] 
(with the extra assumption that $f$ is holomorphic at the cusps if $F=\Q$).  Or, put another way, let $M_k(\frakN,\fraka)$ be the set of holomorphic functions $\calH^n \to \C$ such that $(f \slsh{k} \gamma)(z)=f(z)$ for all $\gamma \in \Gamma_0(\frakN,\fraka)$; then
\[ M_k(\frakN)= \bigoplus_{[\fraka]} M_k(\frakN,\fraka). \]
In particular, we recover the definitions in Section 2 when $F$ has strict class number $1$.

A modular form $f \in M_k(\frakN,\fraka)$ admits a Fourier expansion
\[ f(z)=a_0+\sum_{\mu\in (\ga\frakd^{-1})_+}a_\mu e^{2\pi i\mathrm{Tr}(\mu z)} \]
analogous to (\ref{qExpHilbert}).  We say that $f \in M_k(\frakN,\fraka)$ is a \emph{cusp form} if $f(z) \to 0$ as $z$ tends to any cusp.  Letting $S_k(\frakN,\fraka)$ be the space of such cusp forms, we have 
\[ S_k(\frakN)=\bigoplus_{[\fraka]} S_k(\frakN,\fraka). \]

Let $f=(f_\fraka) \in S_k(\frakN)$ be a Hilbert cusp form and let $\frakn \subseteq \Z_F$ be an ideal.  Suppose that $[\frakn]=[\fraka\frakd^{-1}]$ amongst the representatives chosen for $\Cl^+(\Z_F)$, and let $\nu \in \Z_F$ be such that $\frakn=\nu \fraka\frakd^{-1}$.  We define $a_\frakn=\nu^m a_\nu(f_{\fraka})$; the transformation rule implies that $a_\frakn$ only depends on $\frakn$ and we call $a_\frakn$ the \emph{Fourier coefficient} of $f$ at $\frakn$.
\end{exm}

\medskip

Now suppose that $B$ is definite.  Then the Shimura variety is simply 
\[ X_0^B(\frakN)(\C) = B^\times \backslash \Bhat^\times / \calOhat^\times = \Cl \calO \]
and so is canonically identified with the set of right ideal classes of $\calO$.  Note that the reduced norm map (\ref{strongapprox}) here is the map $\nrd:\Cl \calO \to \Cl^+ \Z_F$ which is surjective but not a bijection, in general.  
A modular form $f \in M_k^B(\frakN)$ is then just a map $f: \Bhat^\times/\calOhat^\times \to W_k(\C)$ such that $f\slsh{k}\gamma=f$ for all $\gamma\in B^\times$.  Such a function is completely determined by its values on a set of representatives of $\Cl \calO$; moreover, given any right ideal $I=\alphahat\calOhat \cap B$, the stabilizer of $B^\times$ acting on $\alphahat \calOhat$ by left multiplication is $\calO_L(I)^\times= \alphahat\calOhat^\times\alphahat^{-1}\cap B^\times$.
Therefore, there is an isomorphism of complex vector spaces given by
\begin{equation} \label{defarbclnoII}
\begin{aligned}
M_k^B(\frakN)&\to& \bigoplus_{\substack{[I] \in \Cl(\calO) \\ I=\alphahat\calOhat \cap B}} W_k(\C)^{\Gamma(I)}\\
f&\mapsto& (f(\alphahat)),
\end{aligned}
\end{equation}
where $\Gamma(I)=\alphahat\calOhat^\times\alphahat^{-1}\cap B^\times=\calO_L(I)^\times$ and $W_k(\C)^{\Gamma(I)}$ is the $\Gamma(I)$-invariant subspace of $W_k(\C)$.  

\medskip

Having now discussed both the definite and indefinite cases in turn, we return to a general quaternion algebra $B$.  Let $f \in S_k^B(\frakN)^{\textup{new}}$ be a newform.  A theorem of Shimura states that the coefficients $a_{\frakn}$ are algebraic integers and $E_f=\Q(\{a_\frakn\})$ is a number field.  The Hecke eigenvalues $a_\frakn$ determine the $L$-series
\[ L(f,s)=\sum_{\frakn \subseteq \Z_F} \frac{a_\frakn}{N\frakn^s} = \prod_{\frakp \nmid \frakN} \left( 1-\frac{a_\frakp}{N\frakp^s} + \frac{1}{N\frakp^{2s+1-k_0}}\right)^{-1} \prod_{\frakp \mid \frakN} \left(1-\frac{a_\frakp}{N\frakp^s}\right)^{-1} \]
associated to $f$ (defined for $\repart s>1$).  Moreover, associated to $f$ is a Galois representation: for $\frakl$ a prime of $\Z_{E_f}$ and $E_{f,\frakl}$ the completion of $E_f$ at $\frakl$, there is an absolutely irreducible, totally odd Galois representation
\[ \rho_{f,\frakl}:\Gal(\overline{F}/F)\to\GL_2(E_{f,\frakl}) \]
such that, for any prime $\frakp \nmid \frakl\frakN$, we have
\[ \Tr(\rho_{f,\frakl}(\Frob_{\frakp}))=a_\frakp(f) \quad \text{ and } \quad \det(\rho_{f,\frakl}(\Frob_\frakp))=N\frakp^{k_0-1}. \]
The existence of this representation is due to work of Blasius-Rogawski \cite{BlasiusRogawski}, Carayol \cite{Carayol}, Deligne \cite{Deligne2}, Saito \cite{Saito}, Taylor \cite{taylor1}, and Wiles \cite{wiles2}.

The statement of the Jacquet-Langlands correspondence (\ref{shimizuclno1}) reads the same in this more general context.

\begin{thm}[Jacquet-Langlands]\label{shimizuadel} 
There is an injective map of Hecke modules
\[ S_k^B(\frakN) \hookrightarrow S_k(\frakD\frakN) \]
whose image consists of those forms which are new at all primes dividing $\frakD$. 
\end{thm}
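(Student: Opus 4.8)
The plan is to deduce this statement from the representation-theoretic Jacquet--Langlands correspondence, following the proof of Theorem~\ref{shimizuclno1}, by first translating both sides into spaces of automorphic forms. First I would reinterpret a modular form $f \in S_k^B(\frakN)$ (in the form of Lemma~\ref{modformourdef}) as a vector in the space of cusp forms on $B^\times(\A_F)$: the archimedean transformation law~(\ref{fslshadel}) together with holomorphy in the first variable cuts out, at each split real place $v_i$ with $1 \le i \le r$, the lowest-weight vector of the holomorphic discrete series of parameter $k_i$, and at each ramified real place $v_i$ with $i > r$ the finite-dimensional representation of $\HH^\times$ attached to the factor $P_{w_i}(m_i)(\C)$ of $W_k(\C)$; invariance under $\calOhat^\times$ cuts out the $\calOhat^\times$-fixed subspace of the finite part. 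Decomposing the cuspidal spectrum, and using multiplicity one for $\GL_2$ and its quaternionic analogue, we obtain
\[ S_k^B(\frakN) \;\cong\; \bigoplus_{\pi}\; \pi_f^{\,\calOhat^\times}, \]
the sum over automorphic representations $\pi = \pi_\infty \otimes \pi_f$ of $B^\times(\A_F)$ whose archimedean component $\pi_\infty$ has the type just described and whose finite part has nonzero $\calOhat^\times$-fixed vectors, each occurring once. The same reinterpretation with $B = \M_2(F)$ gives $S_k(\frakD\frakN) \cong \bigoplus_{\pi'} (\pi'_f)^{\,\widehat{\Gamma}_0(\frakD\frakN)}$, the sum over cuspidal automorphic representations $\pi'$ of $\GL_2(\A_F)$ of the corresponding archimedean type and conductor dividing $\frakD\frakN$.

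Next I would invoke the global Jacquet--Langlands correspondence \cite[Chap.~XVI]{jacqlang}: the map $\pi \mapsto \pi' = \mathrm{JL}(\pi)$ is an injection from cuspidal automorphic representations of $B^\times(\A_F)$ into cuspidal automorphic representations of $\GL_2(\A_F)$, characterized by $\pi'_v \cong \pi_v$ at every place $v$ where $B$ is split and $\pi'_v = \mathrm{JL}_v(\pi_v)$ (the local correspondence, with image in the square-integrable representations) at the places in $S$, and whose image consists precisely of those $\pi'$ that are square-integrable at every place in $S$. It is Hecke-equivariant for the Hecke algebra generated by the $T_\frakp$ with $\frakp \nmid \frakD\frakN$: at such a prime $T_\frakp$ is on both sides the double-coset operator attached to $\mathrm{diag}(p,1)$ acting on the spherical line, whose eigenvalue is read off the common Satake parameter of $\pi_\frakp \cong \pi'_\frakp$. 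Matching the archimedean data is automatic, since $\mathrm{JL}_{v_i}$ carries the holomorphic discrete series of parameter $k_i$ to itself for $i \le r$ and the finite-dimensional representation of $\HH^\times$ of dimension $k_i - 1$ to the discrete series of parameter $k_i$ of $\GL_2(\R)$ for $i > r$. Hence $\mathrm{JL}$ induces a Hecke-equivariant injection $S_k^B(\frakN) \hookrightarrow S_k(\frakD\frakN)$.

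It remains to identify the image with the $\frakD$-new subspace, which is a local computation at the bad primes. At a prime $\frakp \mid \frakN$ with $\frakp \nmid \frakD$, $B_\frakp$ is split and $\calO_\frakp$ is an Eichler order of the appropriate level, so the $\calO_\frakp^\times$-fixed vectors in $\pi_\frakp \cong \pi'_\frakp$ correspond to the $\Gamma_0(\frakp^{v_\frakp(\frakN)})$-fixed vectors by Casselman's theory of the conductor and newvectors; these primes impose no condition beyond ``level dividing $\frakN$ at $\frakp$''. At a prime $\frakp \mid \frakD$, $B_\frakp$ is the division quaternion algebra over $F_\frakp$ with $\calO_\frakp$ its unique maximal order; an irreducible admissible representation $\pi_\frakp$ of $B_\frakp^\times$ with a nonzero $\calO_\frakp^\times$-fixed vector is one-dimensional, of the form $\chi \circ \nrd$ for an unramified character $\chi$ of $F_\frakp^\times$, and its local Jacquet--Langlands transfer is the twist $\mathrm{St} \otimes (\chi \circ \det)$ of the Steinberg representation by an unramified character, of conductor exactly $\frakp$; conversely, any cuspidal $\pi'$ that is new of level exactly $\frakp$ at $\frakp$ is of this form, hence square-integrable at $\frakp$, hence in the image of $\mathrm{JL}$. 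Assembling these local statements over all $\frakp \mid \frakD$ shows that the image of $S_k^B(\frakN)$ in $S_k(\frakD\frakN)$ is exactly the space of forms new at every prime dividing $\frakD$. The main obstacle in making this rigorous is precisely this last step: one has to combine the local Jacquet--Langlands correspondence with the local theory of newvectors and conductors at the primes dividing $\frakD\frakN$, and one must carefully set up the dictionary between the coefficient module $W_k(\C)$ and the archimedean components of automorphic representations; granting the standard references, the remaining steps are formal.
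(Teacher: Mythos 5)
The paper gives no proof of this theorem beyond citing Jacquet--Langlands \cite{jacqlang}, Gelbart--Jacquet \cite{geljac}, and Hida \cite{hida,HidaCM} (exactly as in the proof of Theorem \ref{shimizuclno1}), and your sketch is a correct outline of the standard representation-theoretic argument supplied by those references. Two small points to watch: in the definite case your spectral decomposition of $S_k^B(\frakN)$ must exclude the one-dimensional automorphic representations $\chi\circ\nrd$ (this is precisely what removing $E_k^B(\frakN)$ accomplishes), and your converse assertion at $\frakp\mid\frakD$ --- that a local component of conductor exactly $\frakp$ is necessarily an unramified twist of Steinberg rather than a ramified principal series --- relies on the central character being unramified, which holds here because only $\Gamma_0$-level structure with trivial nebentypus is considered.
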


We are now ready to state the main general result of this article, generalizing the result of Theorem \ref{mainthmclno1} to arbitrary class number and arbitrary weight.

\begin{thm}[Demb\'el\'e-Donnelly \cite{DembeleDonnelly}, Voight \cite{Voightclno}]   
There exists an algorithm which,   given a totally real field $F$, a nonzero ideal $\frakN \subseteq  \Z_F$, and a weight $k \in (\Z_{\geq 2})^{[F:\Q]}$, computes the space $S_k(\frakN)$ of Hilbert cusp forms of level $\frakN$ over $F$ as a Hecke module.
\end{thm}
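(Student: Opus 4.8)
The plan is to deduce the result from the special case of parallel weight $2$ and strict class number $1$ (Theorem~\ref{mainthmclno1}) by combining the Jacquet--Langlands correspondence with the adelic formalism of Section~7. First I would reduce to a space of quaternionic cusp forms: setting $n=[F:\Q]$, take $B$ to be the quaternion algebra over $F$ ramified at exactly the $n$ real places if $n$ is even, and at exactly $n-1$ of the real places if $n$ is odd. In both cases the set of ramified places has even cardinality, so $B$ exists and is unique up to isomorphism, it has discriminant $\frakD=(1)$, and it is definite (resp.\ indefinite with $r=1$). By Theorem~\ref{shimizuadel} there is a Hecke-module isomorphism $S_k^B(\frakN)\cong S_k(\frakN)$, so it suffices to compute $S_k^B(\frakN)$. (Other squarefree $\frakD\mid\frakN$ with $B$ definite or indefinite with $r=1$ give, via Theorem~\ref{shimizuadel} and the degeneracy maps, the same space reassembled from new subspaces as in Remark~\ref{rmkgenspace}; these are sometimes more efficient but are not needed for the existence statement.) Next, using Lemma~\ref{modformourdef} and the decompositions \eqref{adelic-to-tuples} and \eqref{defarbclnoII}, I would split $S_k^B(\frakN)=\bigoplus_{[\fraka]\in\Cl^+\Z_F}S_k^B(\frakN,\fraka)$ into the finitely many summands indexed by the strict class group, after computing a set of representatives $\fraka$, adeles $\alphahat$ with $\nrd(\alphahat)$ generating $\fraka$ as in \eqref{strongapprox}, the orders $\calO_\fraka$, and the groups $\Gamma_\fraka=\calO_{\fraka,+}^\times$. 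This rests on the basic suite of algorithms for quaternion orders and ideals, including computation of a maximal order and of an Eichler order of level $\frakN$.

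In the definite case ($r=0$), the isomorphism \eqref{defarbclnoII} identifies each $S_k^B(\frakN,\fraka)$ with a Hecke-stable subspace of $\bigoplus_{[I]}W_k(\C)^{\Gamma(I)}$, so the remaining work is linear algebra once one has a set of representatives for the finite set $\Cl\calO$, the finite unit groups $\Gamma(I)=\calO_L(I)^\times$, and the Hecke matrices. I would obtain the ideal classes and unit groups by enumerating short vectors in the Euclidean lattices $(I,\Tr\nrd)$, using Eichler's mass formula as a termination criterion for the class-set enumeration, and compute $T_\frakp$ from the finite sets $\Theta(\frakp)_{\fraka,\frakb}$ of \eqref{thetapjv} by enumerating elements of bounded reduced trace in the lattices $I_\frakb I_\fraka^{-1}$, as in \eqref{bijquad}. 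One then diagonalizes the commuting operators $T_\frakp$ and reads off the eigenvalue systems.

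In the indefinite case ($r=1$), the Shimura variety is a finite disjoint union of Riemann surfaces $\Gamma_\fraka\backslash\calH$, and an Eichler--Shimura isomorphism realizes $S_k^B(\frakN,\fraka)$ inside the group cohomology $H^1(\Gamma_\fraka,V_k)$ for a finite-dimensional coefficient module $V_k$ built from $W_k(\C)$ and the $\Sym^{k_1-2}$ local system at the split place (for parallel weight $2$, $V_k=\C$ and one passes to the $+$-eigenspace for complex conjugation as in Section~5). The algorithm then needs (i) a finite presentation of each $\Gamma_\fraka$ with an explicit solution of the word problem, and (ii) a method to write a totally positively normed generator of a principal left $\calO$-ideal of norm $\frakp$. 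For (i) I would compute a Dirichlet fundamental domain as an exterior domain of isometric circles via enumeration of short vectors with respect to a positive definite form depending on a chosen centre, together with the side-pairing and the reduction map $\red_G$; this is carried out and proven to terminate in \cite{V-fd}. For (ii) I would again use lattice enumeration, strong approximation guaranteeing that the relevant ideals are principal. One then computes $H^1(\Gamma_\fraka,V_k)$ and the operators $T_\frakp$ of \eqref{Heckeindefiniteadel2} by linear algebra on the presentation and the word problem, and diagonalizes.

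In all cases the output is a finite list of systems of Hecke eigenvalues $(a_\frakp(f))_\frakp$ with $a_\frakp(f)$ in the number field $E_f=\Q(\{a_\frakn\})$ of Shimura, or equivalently a list of matrices for the $T_\frakp$; correctness comes from the chain of Hecke-equivariant isomorphisms assembled above, and termination from the finiteness of $\Cl^+\Z_F$ and $\Cl\calO$, Eichler's mass formula, the termination proof for the fundamental-domain algorithm, and the finiteness of each lattice enumeration. The hard part will be the indefinite case: the effectivity and termination of the fundamental-domain and word-problem computation, which plays the role of the Manin trick (the Euclidean algorithm) in classical modular symbols, and which for general weight must be combined with a correct, Hecke-equivariant set-up of the coefficient module. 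The definite method is unconditional and comparatively elementary, its only delicate point being the termination of the ideal-class enumeration, for which the mass formula suffices.
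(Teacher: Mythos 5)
Your sketch is correct and follows essentially the same route as the paper: reduce to a quaternion algebra of discriminant $(1)$ via Jacquet--Langlands (definite for $[F:\Q]$ even, indefinite with $r=1$ for $[F:\Q]$ odd), decompose over $\Cl^+\Z_F$, and then compute either by Brandt-matrix/lattice-enumeration methods or in $H^1(\Gamma_\fraka,V_k(\C))^+$ via fundamental domains and the word problem. This is precisely the content of Sections 8 and 9 and of the cited works of Demb\'el\'e--Donnelly and Voight.
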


The proof of this theorem is discussed in the next two sections.  It falls again naturally into two methods, definite and indefinite, which overlap just as in Remark \ref{rmkgenspace}.

\section{Definite method, arbitrary class number}

In this section, we return to the totally definite case but allow arbitrary class number. As explained above, the space $M_k^B(\frakN)$ of modular forms of level $\frakN$ and weight $k$ on $B$ is the space of functions $f: \Bhat^\times/\calOhat^\times \to W_k(\C)$ such that $f\slsh{k}\gamma=f$ for all $\gamma\in B^\times$.  

We can use the identification (\ref{defarbclnoII}) to compute the space $S_k^B(\frakN)$ as in the direct approach of Section 4, with the appropriate modifications.  Let $I_1,\dots,I_H$ be a set of representatives for $\Cl \calO$ such that $\nrd(I_i)$ is coprime to $\frakD\frakN$ for all $i$.  Let $\alphahat_i \in \calOhat$ be such that $\alphahat_i \calOhat \cap \calO = I_i$, and let $\Gamma_i=\calO_L(I_i)^\times$. Then dualizing the isomorphism (\ref{defarbclnoII}), we have
\[ M_k^B(\frakN) \cong \bigoplus_{i=1}^{H} W_k(\C)^{\Gamma_i}. \]
The Hecke module structure on this space is defined similarly as in Section 4, as the following example illustrates.

\begin{exm} \label{defclnowoshapiro}
Consider the totally real quartic field $F=\Q(w)$ where $w^4 - 5w^2 - 2w + 1=0$.  Then $F$ has discriminant $5744=2^4 359$ and Galois group $S_4$.  We have $\Cl^+ \Z_F=2$ (but $\Cl \Z_F=1$).  

The quaternion algebra $B=\quat{-1,-1}{F}$ is unramified at all finite places (and ramified at all real places).  We compute a maximal order $\calO$ and find that $\# \Cl \calO=4$.  We compute the action of the Hecke operators as in (\ref{heckedefdefinite}): we identify the isomorphism classes of the $N\frakp+1$ right ideals of norm $\frakp$ inside each right ideal $I$ in a set of representatives for $\Cl \calO$.  
We compute, for example, that
\[ T_{(w^3 - 4w - 1)} = 
\begin{pmatrix}
0 & 0 & 1 & 1 \\
0 & 0 & 4 & 4 \\
2 & 2 & 0 & 0 \\
3 & 3 & 0 & 0 \\
\end{pmatrix} \]
where $N(w^3-4w-1)=4$; note this matrix has a block form, corresponding to the fact that $(w^3-4w-1)$ represents the nontrivial class in $\Cl^+ \Z_F$. Correspondingly,
\[ T_{(w^2-w-4)} = 
\begin{pmatrix}
6 & 2 & 0 & 0 \\
8 & 12 & 0 & 0 \\
0 & 0 & 8 & 4 \\
0 & 0 & 6 & 10 
\end{pmatrix} \]
with $N(w^2-w-4)=13$ is a block scalar matrix, as $(w^2-w-4)$ is trivial in $\Cl^+ \Z_F$.  In this case, the space $E_2(1)$ of functions that factor through the reduced norm has dimension $\dim E_2(1)=2$, so $\dim S_2(1)=2$, and we find that this space is irreducible as a Hecke module and so has a unique constituent $f$.

We obtain the following table of Hecke eigenvalues:
\[ 
\begin{array}{c||cccccccc}
\frakp & (w^3-4w-1) & (w-1) & (w^2-w-2) & (w^2-3) & (w^2-w-4) & (w^2-2) \\
N\frakp& 4 & 5 & 7 & 13 & 13 & 17 \\\hline
\rule{0pt}{2.5ex} 
a_\frakp(f)& 0 & t & -2t & -t & 4 & 3t  
\end{array} \]
Here $t$ satisfies the polynomial $t^2-6=0$.  As in Section 6, we predict the existence of an abelian variety over $F$ with real multiplication by $\Q(\sqrt{6})$ and everywhere good reduction.
\end{exm}

As in Section 4, the disadvantage of the approach used in Example \ref{defclnowoshapiro} is that for each level $\frakN$, one must compute the set of ideal classes $\Cl \calO=\Cl \calO_0(\frakN)$.  By working with a more complicated coefficient module we can work with ideal classes only with the maximal order $\calO_0(1)$, as follows.

Changing notation, now let $I_1,\dots,I_h$ be representatives for $\Cl \calO_0(1)$, with $h=\#\Cl \calO_0(1)$, and let $I_i=\alphahat_i\calO_0(1) \cap B$.  By strong approximation, we may assume that each $\nrd(I_i)$ is coprime to $\frakD\frakN$: indeed, we may assume each $\nrd(I_i)$ is supported in any set $S$ of primes that generate $\Cl^+ \Z_F$.  
Let $\calO_0(1)_i=\calO_L(I_i)=\alphahat_i \calOhat_0(1) \alphahat_i^{-1} \cap B$ be the left order of $I_i$.  Then $\calO_0(1)_i\otimes_{\Z_F}\Z_{F,\frakN} \cong \calO_0(1)\otimes_{\Z_F}\Z_{F,\frakN}$.

Let $\betahat_a$ for $a \in \PP^1(\Z_F/\frakN)$ represent the $\calO_0(1),\calO$-connecting ideals of norm $\frakN$: that is, $\betahat_a \in \calOhat_0(1)$ and if $J_a=\calOhat_0(1)\betahat_a \cap B$ then $\calO_R(J_a)=\calO$.  Then the set $\{I_i J_a\}_{i,a}$, where $I_i J_a = \alphahat_i \betahat_a \calOhat \cap B$, covers all isomorphism classes of right $\calO$-ideals, but not necessarily uniquely: two such ideals $I_i J_a$ and $I_j J_b$ are isomorphic if and only if $i=j$ and there exists $\gamma \in \calO_0(1)_i^\times$ such that $\gamma J_a = J_b$, comparing the elements $\alphahat_i \betahat_a, \alphahat_j \betahat_b \in \calOhat_0(1)$.  The action of $\calO_0(1)_i^\times$ can be equivalently given on the set of indices $a \in \PP^1(\Z_F/\frakN)$: via the (reduction of a) splitting map 
\begin{equation} \label{identactionO1N}
\iota_\frakN: \calO_0(1) \hookrightarrow \calO_0(1)\otimes_{\Z_F}\Z_{F,\frakN}\cong\M_2(\Z_{F,\frakN}),
\end{equation} 
each $\calO_0(1)_i^\times$ acts on the left on $\PP^1(\Z_F/\frakN)$, and we have $\calOhat_0(1)_i^\times/\calOhat_i^\times \xrightarrow{\sim} \PP^1(\Z_F/\frakN)$.  

We conclude that $M_k^B(\frakN) \cong \calM_k^B(\frakN)=\bigoplus_{i=1}^h \calM_k^B(\frakN)_i$, where
\[ \calM_k^B(\frakN)_i = \left\{f:\,\PP^1(\Z_F/\frakN) \to W_k(\C):\, f\slsh{k}\gamma=f\text{ for all }\gamma\in \calO_0(1)_i^\times\right\}. \]
In this presentation, the Hecke operators act as follows.  For a prime $\frakp$, let
\[ \Theta(\frakp)_{i,j}=\calO_0(1)_i^\times\backslash\left\{x\in I_iI_j^{-1}: \nrd(x I_i I_j^{-1})=\frakp\right\}. \]
We then define the linear map $T_\frakp:\calM_k^B(\frakN) \to \calM_k^B(\frakN)$ on each component by the rule
\begin{equation} \label{defheckeadel}
\begin{aligned}
(T_\frakp)_{i,j} : \calM_k^B(\frakN)_i &\to \calM_k^B(\frakN)_j \\
f&\mapsto \sum_{\gamma \in \Theta(\frakp)_{i,j}} f \slsh{k} \gamma.
\end{aligned}
\end{equation}
This is indeed an isomorphism of Hecke modules.  For further details, see work of the first author \cite[Theorem 2]{Dembeleclassno1} which traces these maps under the assumption that $F$ has narrow class number one, but this assumption can be easily removed. 

Put another way, by the decomposition
\[ \Bhat^\times=\bigsqcup_{i=1}^h B^\times \alphahat_i \calOhat_0(1)^\times, \] 
we decompose the set $X_0^B(\frakN)$ as 
\[X_0^B(\frakN)=B^\times\backslash\Bhat^\times/\calOhat^\times=\bigsqcup_{i=1}^h B^\times\backslash\left(B^\times \alphahat_i \calOhat_0(1)^\times\right)/\calOhat^\times \xrightarrow{\sim}  \bigsqcup_{i=1}^h\calO_0(1)_i^\times\backslash\calOhat_0(1)_i^\times/\calOhat_i^\times,\]
where the last identification is obtained by sending $\gamma \alphahat_i u$ to $\alphahat_i u \alphahat_i^{-1}$. Thus, analogously to~(\ref{breakup}), we get a decomposition
\begin{equation} \label{decompident}
X_0^B(\frakN) =\bigsqcup_{i=1}^h X_0^B(\frakN)_i=\bigsqcup_{i=1}^h\Gamma_i\backslash\PP^1(\Z_F/\frakN).
\end{equation}
In particular, this gives $X_0^B(1)= \Cl\calO_0(1)$. From this, we get an identification
\begin{equation}\label{def-adelic-tuples}
\begin{aligned}
M_k^B(\frakN) \to& \bigoplus_{i=1}^h \calM_k^B(\frakN)_i\\
f\mapsto& (f_i)_i,
\end{aligned}
\end{equation}
where we set $f_i(x)=f(\alphahat \alphahat_i)$ after choosing $\alphahat \in \calOhat_0(1)_i^\times$ such that $x=\alphahat\cdot\infty_i$. Again, the decomposition~(\ref{def-adelic-tuples}) is analogous to~(\ref{adelic-to-tuples}), and one shows that it is a Hecke module isomorphism by arguing similarly. 

Now $f \in M_k^B(\frakN)$ is by definition a map $f: \Bhat^\times/\calOhat^\times \to W_k(\C)$ such that $f \slsh{k} \gamma = f$ for all $\gamma \in B^\times$.  Associated to such a map, via the identification (\ref{decompident}), such a map is uniquely defined by a tuple of maps $(f_i)_i$ with $f_i: \calOhat_0(1)_i^\times / \calOhat_i^\times \to W_k(\C)$ such that $f \slsh{k} \gamma = f$ for all $\gamma \in \Gamma_i=\calO_0(1)_i^\times$.  In other words, 
\[ M_k^B(\frakN) \xrightarrow{\sim} 
\bigoplus_{i=1}^h H^0( \Gamma_i, \Hom(\calOhat_0(1)_i^\times / \calOhat_i^\times, W_k(\C))) \cong 
\bigoplus_{i=1}^h H^0\bigl( \Gamma_i, \Coind_{\calOhat_i^\times}^{\calOhat_0(1)_i^\times}W_k(\C)\bigr). \]
But then as in (\ref{identactionO1N}), we have
\[ H^0\bigl( \Gamma_i, \Coind_{\calOhat_i^\times}^{\calOhat_0(1)_i^\times}W_k(\C)\bigr) \cong \calM_k^B(\frakN)_i. \]

\begin{exm} 
The real quadratic field $F=\Q(\sqrt{106})$ has strict class number $2$ and class number $2$. We compute that the space $S_2(1)$ of Hilbert cusp forms of level 1 and parallel weight $2$ has dimension $50$. It decomposes into four Hecke constituents of dimension $1$,  six of dimension $2$, two of dimension $4$  and one of dimension $26$. The table below contains the first few Hecke eigenvalues of the one-dimensional constituents. 

\begin{eqnarray*}
\begin{array}{c|ccccccc}
\frakp& (2, w)&(3, w + 1)& (3, w-1)& (5, w + 1)& (5, w-1)& (3w+31) & (3w-31) \\
N\frakp& 2& 3& 3& 5& 5& 7& 7\\\hline
\rule{0pt}{2.5ex} 
a_\frakp(f_1)&-1& -2& 3& 3& -2& 4& -1\\
a_\frakp(f_2)& -1& 3& -2& -2& 3& -1& 4\\
a_\frakp(f_3)& 1& 2& -3& -3& 2& 4& -1\\
a_\frakp(f_4)& 1& -3& 2& 2& -3& -1& 4 
\end{array}
\end{eqnarray*}

The forms $f_1$ and $f_2$ (resp.\ $f_3$ and $f_4$) are interchanged by the action of $\Gal(F/\Q)$ (on the ideals $\frakp$).  The forms $f_1$ and $f_3$ (resp.\ $f_2$ and $f_4$) are interchanged by the action of $\Cl^+ \Z_F$, so these forms are twists via the strict class character of $\Gal(F^+/F)$, where $F^+$ denotes the strict class field of $F$. 

Elkies has found a curve $E$ which gives rise to the above data:
\begin{align*} 
E: y^2 - wxy - 2w y &= x^3 + (-2-2w)x^2 + (-10809936+1049944w) \\
&\qquad + (-19477819120 + 1891853024w). 
\end{align*}
The curve $E$ has $j$-invariant $j(E)=264235 + 25777w$ and has everywhere good reduction.  We conclude that $E$ is modular using Kisin~\cite[Theorem 2.2.18]{kisin1} (see also Kisin~\cite[Theorem 3.5.5]{kisin2}): we need to verify that $3$ is split in $F$, that $E$ has no CM, and that the representation $\rho_3: \Gal(\overline{F}/F) \to \GL_2(\Z_3)$ has surjective reduction $\overline{\rho}_3:\Gal(\overline{F}/F) \to \GL_2(\F_3)$ which is solvable hence modular.

We find that $E$ matches the form $f_1$; so its conjugate by $\Gal(F/\Q)$ corresponds to $f_2$ and the quadratic twist of $E$ by the fundamental unit $4005-389w$ (of norm $-1$) corresponds to $f_3$ (and its conjugate to $f_4$).
\end{exm}

The input of our algorithm is a totally real number field $F$ of degree $n$, a totally definite quaternion algebra $B$ with discriminant $\frakD$, an integral ideal $\frakN\subseteq \Z_F$ which is coprime with $\frakD$, a weight $k\in \Z^n$ such that $k_i\ge 2$ and $k_i\equiv k_j\pmod{2}$, and a prime $\frakp\nmid\frakD$. The output is then a matrix giving the action of $T_\frakp$ in a basis of $M_k^B(\frakN)=\bigoplus_{i=1}^h\calM_k^B(\frakN)_i$ which is independent of $\frakp$. By computing enough $T_\frakp$ and simultaneously diagonalising, one obtains all Hecke constituents corresponding to Hilbert newforms of level $\frakN$ and weight $k$.

The algorithm starts by finding a maximal order $\calO_0(1)$, then computes a set of representatives $\Cl \calO_0(1)$ of the right ideal classes of $\calO_0(1)$ whose norms generate $\Cl^+(\Z_F)$ and are supported outside $\frakD\frakN$. This part of the algorithm uses work of the second author and Kirschmer~\cite{KirschmerVoight}, it is the most time consuming part but can be seen as a precomputation. Next, the algorithm finds a fundamental domain for the action of each $\Gamma_i$ on $\PP^1(\Z_F/\frakN)$, and computes $M_k^B(\frakN)$ as the direct sum of the
\[\calM_k^B(\frakN)_i=\bigoplus_{[x]\in X_0^B(\frakN)_i} W_k(\C)^{\Gamma_x}, \]
where $\Gamma_x$ is the stabilizer of $x$ in $\Gamma_i$. From this, one obtains a basis of $M_k^B(\frakN)$. Finally, the algorithm computes the sets $\Theta(\frakp)_{i,j}$, and then the block matrices  which give the action of $T_\frakp$ in this basis. We refer to~\cite{Dembeleclassno1} and~\cite{DembeleDonnelly} for further details on the implementation.


\section{Indefinite method, arbitrary class number}

In this section, we generalize the indefinite method to arbitrary class number.  We carry over the notation from Section 7, and now take the quaternion algebra $B$ to be ramified at all but one real place.

In this case, from (\ref{breakup})--(\ref{BplusHr}), the space $X(\C)=X_0^B(\frakN)(\C) = B_+^\times \backslash (\calH \times \Bhat^\times / \calOhat^\times)$ is the disjoint union of Riemann surfaces indexed by $\Cl^+\Z_F$.  Let $\{\fraka\}$ be a set of representatives for $\Cl^+ \Z_F$ and let $\ahat \in \ZFhat$ be such that $\ahat\,\ZFhat \cap \Z_F = \fraka$ for each $\fraka$.  Then 
\begin{equation} \label{breakupshimcurve}
 X(\C)=\bigsqcup_{[\fraka] \in \Cl^+(\Z_F)} \Gamma_\fraka \backslash \calH = \bigsqcup_{[\fraka] \in \Cl^+(\Z_F)} X_\fraka(\C) \\
\end{equation}
where $\calO_{\fraka}=\alphahat \calOhat \alphahat^{-1} \cap B$ and $\Gamma_\fraka=\calO_{\fraka,+}^\times$.

Therefore, a modular form of weight $k$ and level $\frakN$ is a tuple $(f_\fraka)$ of functions $f_\fraka:\calH \to W_k(\C)$, indexed by $[\fraka] \in \Cl^+ \Z_F$, such that for all $\fraka$, we have 
\[ (f_\fraka \slsh{k} \gamma)(z) = f_\fraka(z) \] 
for all $\gamma \in \Gamma_\fraka$ and all $z \in \calH$.  In particular, if $k=(2,\dots,2)$ is parallel weight $2$, then $(f_\fraka)$ corresponds to a tuple of holomorphic $1$-forms $((2\pi i) f_\fraka(z)\,dz)_\fraka$, one for each curve $X_\fraka(\C)$.

We compute with this space of functions by relating them to cohomology, and for that we must modify the coefficient module.  Define the right $\GL_2(\C)^n=\GL_2(\C)\times \GL_2(\C)^{n-1}$-module
\[ V_k(\C)=\bigotimes_{i=1}^{n} P_{w_i}(m_i)(\C) =P_{w_1}(m_1)(\C)\otimes W_k(\C). \]
The group $B^\times$ acts on $V_k(\C)$ via the composite splitting $B^\times \hookrightarrow \GL_2(\C)^n$ given by $\gamma \mapsto (\gamma_i)_i$.  The Eichler-Shimura theorem, combined with the isomorphism (\ref{adelic-to-tuples}), applied to each component $X_\fraka(\C)$ of $X(\C)$ in (\ref{breakupshimcurve}), gives the isomorphism of Hecke modules
\begin{equation} \label{woahSk0}
S_k^B(\frakN) \xrightarrow{\sim} \bigoplus_{[\fraka]} H^1(\Gamma_{\fraka}, V_k(\C))^+,
\end{equation}
where ${}^+$ denotes the $+1$-eigenspace for complex conjugation.  

In the description (\ref{woahSk0}), the Hecke operators act on $\bigoplus H^1(\Gamma_\frakb, V_k(\C))$ in the following way; we follow their definition in (\ref{thetapjv}).  Let $\frakp$ be a prime ideal of $\Z_F$ with $\frakp \nmid \frakD\frakN$.  We consider the $[\frakb]$-summand, and given $f \in H^1(\Gamma_{\frakb},V_k(\C))$ we will define $T_\frakp f \in H^1(\Gamma_{\fraka}, V_k(\C))$, where $[\frakb]=[\frakp^{-1} \fraka]$.
Let $I_\fraka=\alphahat\calOhat\cap B$ and $I_{\frakb}=\betahat\calOhat\cap B$ so that $\nrd(I_\frakb)=\frakb$ and $\nrd(I_{\fraka})=\fraka$, and let
\begin{align*}
\Theta(\frakp)_{\fraka,\frakb} &= \Gamma_\frakb \backslash\left\{\varpi\in B_{+}^\times\cap I_{\frakb} I_{\fraka}^{-1}:
   \nrd(I_\frakb I_\fraka^{-1})\frakp= (\nrd(\varpi)) \right\}\\
&= \Gamma_\frakb \backslash\left\{\varpi \in B_{+}^\times\cap I_{\frakb}I_{\fraka}^{-1}:
   \,\, \nrd(\varpi)\fraka=\mathfrak{p}\frakb\right\},
\end{align*}
where $\Gamma_\frakb=\calO_{\frakb, +}^\times$ acts by multiplication on the left.  Let $\gamma\in\Gamma_{\fraka}$, so that $\gamma I_{\fraka}=I_{\fraka}$.  Then the map $\varpi \mapsto \varpi \gamma$ on $B^\times$ induces a bijection (of the equivalence classes) of $\Theta(\frakp)_{\fraka,\frakb}$.  Therefore, for every $\varpi \in\Theta(\frakp)_{\fraka,\frakb}$, there exists $\delta_\varpi \in\Gamma_{\fraka}$ and $\varpi_\gamma \in \Theta(\frakp)_{\fraka,\frakb}$ such that $\varpi \gamma=\delta_\varpi \varpi_\gamma$.  From (\ref{finalhecke!}) and the Eichler-Shimura theorem, we have
\begin{equation} \label{heckeindef9}
(T_\frakp f)(\gamma) = \sum_{\varpi \in\Theta(\frakp)_{\fraka,\frakb}}f(\delta_\varpi)^{\varpi}.
\end{equation}
One can similarly define the Atkin-Lehner involutions.

Admittedly, this description is complicated, but it can be summarized simply: a Hecke operator $T_\frakp$ permutes the summands (\ref{woahSk0}) in accordance with translation by $[\frakp]$ in $\Cl^+ \Z_F$, and adjusting for this factor one can principalize as before (when the strict class number was 1). The resulting Hecke matrices are consequently block matrices.  

We illustrate this with an example; we then give a few more details on the algorithm.

\begin{exm}
Let $F=\Q(w)$ be the (totally real) cubic field of prime discriminant $257$, with $w^3-w^2-4w+3=0$.  Then $F$ has Galois group $S_3$ and $\Z_F=\Z[w]$.  The field $F$ has class number $1$ but strict class number $2$: the unit $(w-1)(w-2)$ generates the group $\Z_{F,+}^\times/\Z_F^{\times 2}$ of totally positive units modulo squares.  

Let $B=\quat{-1,w-1}{F}$ be the quaternion algebra with $i^2=-1$, $j^2=w-1$, and $ji=-ij$.  Then $B$ has discriminant $\frakD=(1)$ and is ramified at two of the three real places and unramified at the place with $w \mapsto 2.19869\dots$, corresponding to $\iota_\infty:B \hookrightarrow \M_2(\R)$.  The order
\[ \calO=\Z_F \oplus (w^2+w-3)i \Z_F \oplus ((w^2+w)-8i+j)/2 \Z_F \oplus ((w^2+w-2)i+ij)/2 \Z_F \]
is an Eichler order of level $\frakN=(w)^2$ where $N(w)=3$.  

A fundamental domain for the action of $\Gamma=\iota_\infty(\calO_+^\times)$ on $\calH$ is as follows.

\begin{center}
\includegraphics{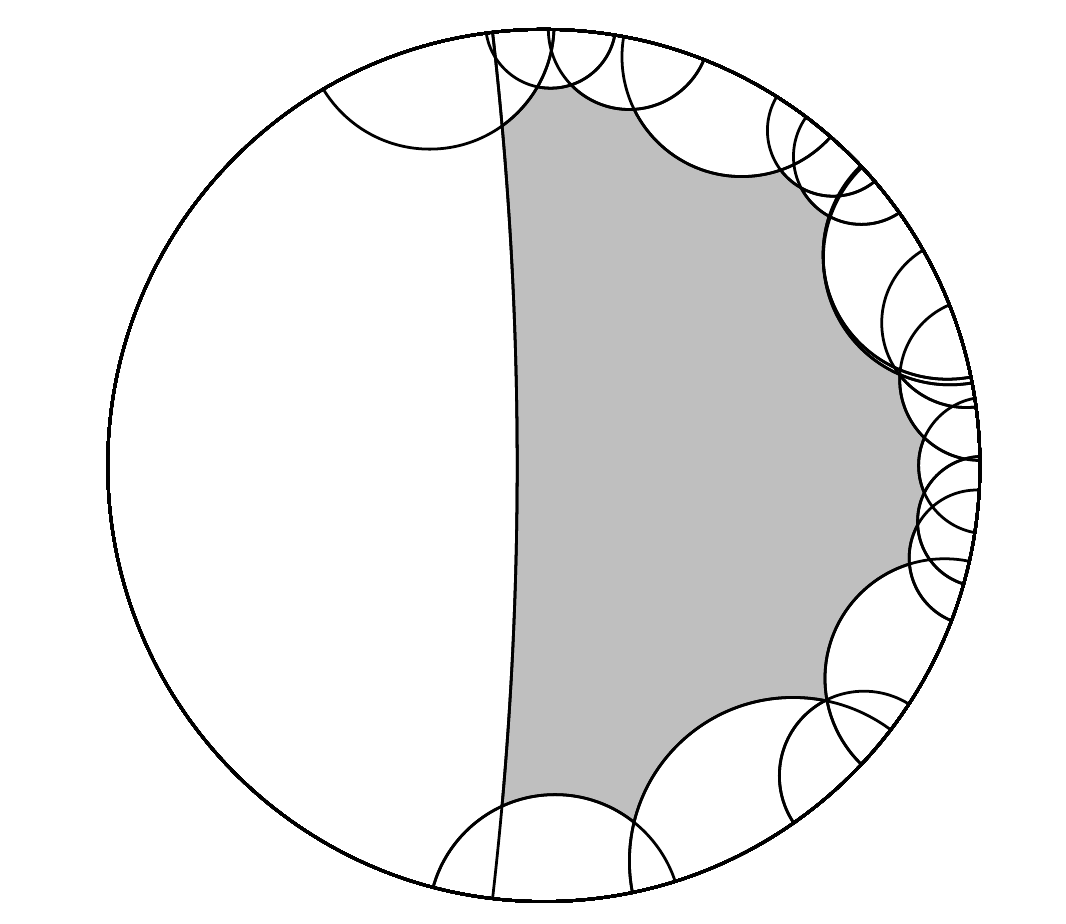}
\end{center}

The ideals $(1)$ and $\fraka=(w+1)\Z_F$ represent the classes in the strict class group $\Cl^+ \Z_F$.  The ideal $J_\fraka=2\calO + \left((w^2 + w + 2)/2 - 4i + (1/2)j\right)\calO$ has $\nrd(J_\fraka)=\fraka$.  The left order of $J_\fraka$ is $\calO_L(J_\fraka)=\calO_\fraka$ where
\begin{align*}
\calO_\fraka = \Z_F &\oplus (w^2-2w-3)i \Z_F \oplus \left((w^2 + w)/2 - 4i + (1/2)j\right)\Z_F \\
&\oplus (1/10)\left((174w^2 - 343w - 348)i + (w^2 - 2w - 2)j + (-w^2 + 2w + 2)ij\right)\Z_F.
\end{align*}
A fundamental domain for the action of $\Gamma_\fraka=\iota_\infty(\calO_{\fraka,+}^\times)$ on $\calH$ is as follows.

\begin{center}
\includegraphics{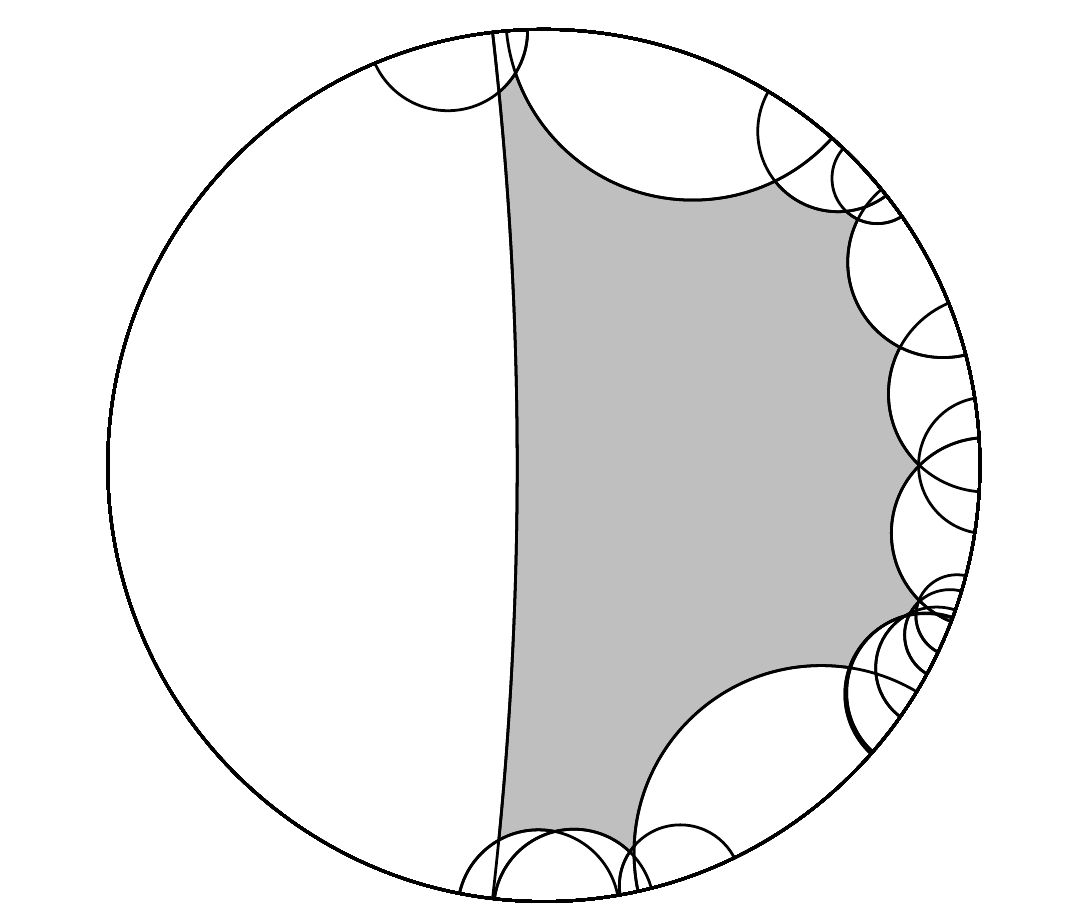}
\end{center}

The orders $\calO$ and $\calO_\fraka$ are not isomorphic since the connecting ideal $I_\fraka$ (with left order $\calO_\fraka$ and right order $\calO$) is not principal.  This implies that the groups $\Gamma$ and $\Gamma_\fraka$ are not conjugate as subgroups of $\PSL_2(\R)$ but nevertheless are isomorphic as abstract groups: they both have signature $(1;2,2,2,2)$, so that
\[ \Gamma \cong \Gamma_\fraka \cong \langle \gamma,\gamma',\delta_1,\dots,\delta_4 : \delta_1^2=\dots=\delta_4^2=[\gamma,\gamma']\delta_1\cdots \delta_4 = 1 \rangle. \]
In particular, both $X_{(1)}(\C)$ and $X_\fraka(\C)$ have genus $1$, so 
\[ \dim H^1(X(\C),\C)=\dim H^1(X_{(1)}(\C),\C) + \dim H^1(X_\fraka(\C),\C) = 4=2\dim S_2(\frakN). \]
We choose a basis of characteristic functions on $\gamma, \gamma'$ as a basis for $H^1(X_{(1)}(\C),\C)$ and similarly for $H^1(X_\fraka(\C),\C)$.

We now compute Hecke operators following the above.  Let $H=H^1(X(\C),\C)$.  We compute that complex conjugation acts on $H$ by the matrix 
\[ H \slsh{} W_\infty = \begin{pmatrix} -1 & -1 & 0 & 0 \\ 0 & 1 & 0 & 0 \\ 0 & 0 & -1 & 0 \\ 0 & 0 & 1 & 1 \end{pmatrix}. \]
Note that $W_\infty$ in this case preserves each factor.  Now consider the Hecke operator $T_\frakp$ where $\frakp=(2w-1)$ and $N(\frakp)=7$.  Then $\frakp$ represents the nontrivial class in $\Cl^+ \Z_F$.  We compute that
\[ H \slsh{} T_{\frakp} = \begin{pmatrix} 0 & 0 & -3 & 2 \\ 0 & 0 & -2 & -4 \\ -4 & -2 & 0 & 0 \\ 2 & -3 & 0 & 0 \end{pmatrix} \]
and restricting we get
\[ H^+ \slsh{} T_{\frakp} = \begin{pmatrix} 0 & -2 \\ -8 & 0 \end{pmatrix}. \]
Therefore there are two eigenspaces for $T_{\frakp}$ with eigenvalues $4,-4$.  By contrast, the Hecke operator $T_{(2)}$ acts by the scalar matrix $3$ on $H$, preserving each component.

Continuing in this way, we find the following table of eigenvalues:
\[
\begin{array}{c||ccccccccccccccc}
N\frakp & 3 & 7    & 8 & 9       & 19      & 25       & 37   & 41       & 43     & 47   & 49       & 53        &
          61 & 61 & 61 \\
\hline
\rule{0pt}{2.5ex} 
a_\frakp(f) & -1 & 4 & 3 & -4 & -4 & -8 & 4 & -6 & -8 & 0 & 4 & 12 & -8 & 2 & 4 \\
a_\frakp(g) & -1 & -4 & 3 & 4 & -4 & 8 & -4 & -6 & 8 & 0 & -4 & -12 & 8 & 2 & -4 \\
\end{array}  
\] 
Clearly, the form $g$ is the quadratic twist of the form $f$ by the nontrivial character of the strict class group $\Gal(F^+/F)$, where $F^+$ is the strict class field of $F$.  Note also that these forms do not arise from base change from $\Q$, since $a_\frakp$ has three different values for the primes $\frakp$ of norm $61$.

We are then led to search for elliptic curves of conductor $\frakN=(w)^2$, and we find two:
\begin{align*}
E_f: y^2 + (w^2 + 1)xy &= x^3 - x^2 + (-36w^2 + 51w - 18)x + (-158w^2 + 557w - 317) \\
E_g: y^2 + (w^2 + w + 1)xy + y &= x^3 + (w^2 - w - 1)x^2 + (4w^2 + 11w - 11)x + (4w^2 + w - 3)
\end{align*}
Each of these curves have nontrivial $\Z/2\Z$-torsion over $F$, so as above they are modular and we match Hecke eigenvalues to find that $E_f$ corresponds to $f$ and $E_g$ corresponds to $g$.  

In this situation, although by the theory of canonical models due to Deligne we know that the variety $X(\C)=X_{(1)}(\C) \sqcup X_{\fraka}(\C)$ has a model $X_F$ over $F$, the curves themselves are not defined over $F$---they are interchanged by the action of $\Gal(F^+/F)$.  Nevertheless, the Jacobian of $X_F$ is an abelian variety of dimension $2$ defined over $F$ which is isogenous to $E_f \times E_g$---we characterize in this way isogeny classes, not isomorphism classes.
\end{exm}

As in the case of class number $1$, the application of Shapiro's lemma allows us always to work with the group associated to a maximal order, as follows.  Let $\calO_0(1) \supseteq \calO$ be a maximal order containing $\calO$, and for each ideal $\fraka$, let $\calO_0(1)_{\fraka}=\alphahat \calOhat_0(1) \alphahat^{-1} \cap B$ be the maximal order containing $\calO_\fraka$, and let $\Gamma(1)_\fraka=\iota_\infty(\calO_{+}^\times)$.  Further, define
\[ V_k(\C)_\fraka=\Coind_{\Gamma_{\fraka}}^{\Gamma(1)_{\fraka}} V_k(\C) \]  
for each $\fraka$.  Then Shapiro's lemma implies that
\[ H^1(\Gamma_\fraka, V_k(\C)) \cong H^1(\Gamma(1)_\fraka, V_k(\C)_\fraka) \]
and so
\[ S_k^B(\frakN) \xrightarrow{\sim} \bigoplus_{[\fraka]} H^1(\Gamma(1)_{\fraka}, V_k(\C)_\fraka)^+. \]

Our algorithm takes as input a totally real field $F$ of degree $[F:\Q]=n$, a quaternion algebra $B$ over $F$ split at a unique real place, an ideal $\frakN \subset \Z_F$ coprime to the discriminant $\frakD$ of $B$, a vector $k \in (2\Z_{>0})^n$, and a prime $\frakp \nmid \frakD\frakN$, and outputs the matrix of the Hecke operator $T_\frakp$ acting on the space $H= \bigoplus_{\frakb} H^1\bigl(\Gamma(1)_{\frakb}, V_k(\C)_{\frakb}\bigr)^+$ with respect to some fixed basis which does not depend on $\frakp$.  From these matrices, we decompose the space $H$ into Hecke-irreducible subspaces by linear algebra.  We give a short overview of this algorithm.

First, some precomputation.  We precompute a set of representatives $[\fraka]$ for the strict class group $\Cl^+ \Z_F$ with each $\fraka$ coprime to $\frakp\frakD\frakN$.  For each representative ideal $\fraka$, precompute a right $\calO_0(1)$-ideal $I_{\fraka}$ such that $\nrd(I_{\fraka}) = \fraka$ and let $\calO_0(1)_\fraka=\calO_L(I_\fraka)$ be the left order of $I_{\fraka}$.  Next, we compute for each $\fraka$ a finite presentation for $\Gamma(1)_\fraka$ consisting of a (minimal) set of generators $G_\fraka$ and relations $R_\fraka$ together with a solution to the word problem for the computed presentation \cite{V-fd}.  Then using standard linear algebra techniques, we compute a basis for the space $\bigoplus_{[\fraka]} H^1(\Gamma(1)_{\fraka}, V_k(\C)_\fraka)$.  

The main issue then is to make the description (\ref{heckeindef9}) amenable to explicit computation.  First, compute a splitting $\iota_\frakp: \calO_0(1) \hookrightarrow \M_2(\Z_{F,\frakp})$.  Then for each ideal $\fraka$, perform the following steps. 

First, compute the ideal $\frakb$ with ideal class $[\frakb]=[\frakp^{-1}\fraka]$.  Compute the left ideals 
\[ J_a =\calO_a\iota_\frakp^{-1}\begin{pmatrix} x & y \\ 0 & 0 \end{pmatrix} + \calO_a\frakp \]
indexed by the elements $a=(x:y) \in \PP^1(\F_\frakp)$.  Then compute the left $\calO_{\frakb}$-ideals $I_{\frakb} I_\fraka^{-1} J_a$ and compute totally positive generators $\varpi_a \in \calO_{\fraka} \cap B_+^\times$ corresponding to $\calO_{\frakb} \varpi_a = I_{\frakb} I_\fraka^{-1} J_a$ \cite{KirschmerVoight}.

Now, for each $\gamma$ in a set of generators $G_\fraka$ for $\Gamma_\fraka$, compute the permutation $\gamma^*$ of $\PP^1(\F_\frakp)$ \cite[Algorithm 5.8]{GreenbergVoight} and then the elements $\delta_a=\varpi_a\gamma\varpi_{\gamma^*a}^{-1}$ for $a \in \PP^1(\F_\frakp)$; write each such element $\delta_a$ as a word in $G_\frakb$ and then apply the formula
\[ (T_\frakp f_\fraka)(\gamma) = \sum_{a\in\PP^1(\F_\frakp)}f_\frakb(\delta_a)^{\varpi_a}. \]

The algorithm in its full detail is rather complicated to describe; we refer the reader to work of the second author \cite{Voightclno} for the details.  

\end{document}